\numberwithin{equation}{section}
\theoremstyle{plain}
\newtheorem{theorem}[equation]{Theorem}
\newtheorem{proposition}[equation]{Proposition}
\newtheorem{corollary}[equation]{Corollary}
\newtheorem{lemma}[equation]{Lemma}
\newtheorem{sublemma}[equation]{Sublemma}
\newtheorem{hypotheses}[equation]{Hypotheses}
\theoremstyle{definition}
\newtheorem{defn}[equation]{Definition}
\newtheorem{notation}[equation]{Notation}
\newtheorem{example}[equation]{Example}
\newtheorem{claim}[equation]{Claim}
\newcommand{\DMO}{\DeclareMathOperator}
\newcommand{\ff}{\footnote}
\newcommand{\beq}{\begin{equation}}
\newcommand{\eeq}{\end{equation}}
\newcommand{\bbar}[1]{\overline{#1}}
\newcommand{\dra}{\dashrightarrow}
\newcommand{\hra}{\hookrightarrow}
\newcommand{\ssm}{\smallsetminus}
\newcommand{\st}{\left\vert\right.}
\newcommand{\wt}{\widetilde}
\DeclareMathOperator{\Ann}{Ann}
\DeclareMathOperator{\Aut}{{Aut}}
\DeclareMathOperator{\Hom}{{Hom}}
\DeclareMathOperator{\id}{{Id}}
\DeclareMathOperator{\im}{Im}
\DeclareMathOperator{\Ker}{Ker}
\DeclareMathOperator{\Num}{Num}
\DeclareMathOperator{\red}{red}
\DeclareMathOperator{\sing}{sing}
\DeclareMathOperator{\Spec}{Spec}
\DeclareMathOperator{\Supp}{Supp}
\DeclareMathOperator{\rgr}{gr-\!}
\DeclareMathOperator{\rQgr}{Qgr-\!}
\newcommand{\sh}{\mathcal}
\newcommand{\mb}{\mathbb}
\newcommand{\CC}{{\mb C}}
\newcommand{\NN}{{\mathbb N}}
\newcommand{\PP}{{\mathbb P}}
\newcommand{\ZZ}{{\mathbb Z}}
\newcommand{\kk}{{\Bbbk}}
\newcommand{\sA}{\sh{A}}
\newcommand{\sB}{\sh{B}}
\newcommand{\sF}{\sh{F}}
\newcommand{\sH}{\sh{H}}
\newcommand{\sI}{\sh{I}}
\newcommand{\sJ}{\sh{J}}
\newcommand{\sK}{\sh{K}}
\newcommand{\sL}{\sh{L}}
\newcommand{\sM}{\sh{M}}
\newcommand{\sN}{\sh{N}}
\newcommand{\sO}{\sh{O}}
\newcommand{\sP}{\sh{P}}
\newcommand{\sR}{\sh{R}}
\newcommand{\sS}{\sh{S}}
\newcommand{\sT}{\sh{T}}
\newcommand{\sX}{\sh{X}}
\newcommand{\Ct}{\widetilde{C}}
\newcommand{\Vt}{\widetilde{V}}
\newcommand{\Xt}{\widetilde{X}}
\newcommand{\nba}{na\"ive blowup algebra}
\renewcommand{\H}{H}	
\DMO{\Pic}{Pic}
\title{Na\"ive blowups and canonical birationally commutative factors}
\author{T. A. Nevins}
\address{Department of Mathematics\\University of Illinois at Urbana-Champaign\\Urbana, IL 61801, USA}
\email{nevins@illinois.edu}
\author{S. J. Sierra}
\address{School of Mathematics\\University of Edinburgh\\King's Buildings, Mayfield Road\\Edinburgh EH9 3JZ, UK}
\email{s.sierra@ed.ac.uk}
\date{\today}
\subjclass[2000]{16P40, 16S38, 16W50, 14D22, 14D23}
\keywords{point space, noncommutative Hilbert scheme, canonical birationally commutative factor, naive blowup algebra}
\begin{document}
\begin{abstract}
In 2008, Rogalski and Zhang  \cite{RZ2008} showed that if $R$ is a strongly noetherian connected graded algebra over an algebraically closed field $\kk$, then $R$ has a canonical birationally commutative factor.  This factor is, up to finite dimension, a twisted homogeneous coordinate ring $B(X, \sL, \sigma)$; here $X$ is the projective parameter scheme for point modules over $R$, as well as tails of points in $\rQgr R$.  (As usual,  $\sigma$ is an automorphism of $X$, and $\sL$ is a $\sigma$-ample invertible sheaf on $X$.)

We extend this result to a large  class of noetherian (but not strongly noetherian) algebras.   Specifically, let $R$ be a noetherian connected graded $\kk$-algebra, where $\kk$ is an uncountable algebraically closed field.  Let $Y_\infty$ denote the parameter space (or stack or proscheme) parameterizing $R$-point modules, and suppose there is a projective variety $X$ that  corepresents tails of points.  There is a canonical map $p:  Y_\infty \to X$.  If the indeterminacy locus of $p^{-1}$ is 0-dimensional and $X$ satisfies a mild technical assumption, we show that there is a  homomorphism $g:  R \to B(X, \sL, \sigma)$, and that $g(R)$ is, up to finite dimension, a naive blowup on $X$ in the sense of \cite{KRS, RS-0} and satisfies a universal property.
We further show that the point space $Y_\infty$ is noetherian.
\end{abstract}
\maketitle

\begin{center}
{\it Dedicated to Toby Stafford on the occasion of his 60th birthday, with gratitude and admiration}
\end{center}

\section{Introduction}\label{INTRO}
Let $\kk$ be an algebraically closed field, which we assume in the introduction to be uncountable. 
A foundational technique of noncommutative algebraic geometry is to study a finitely generated graded $\kk$-algebra $R= \kk \oplus R_1 \oplus R_2 \oplus \cdots $ via the parameter spaces for {\em point modules} over $R$:  these are cyclic (right) modules with the Hilbert series $1/(1-s)$.  The main idea is due to Artin, Tate, and Van den Bergh \cite{ATV1990}, although the fundamental result was proved by Rogalski and Zhang.

Rogalski and Zhang's result applies to strongly noetherian algebras.  
Recall that  $R$ is {\em strongly noetherian} if $R \otimes_\kk A$ is noetherian for any commutative noetherian $\kk$-algebra $A$.  
We recall also that if 
$X$ is a projective scheme,  $\sigma \in \Aut_\kk(X)$, and  $\sL$ is an invertible sheaf on $X$, then we may define a  {\em twisted homogeneous coordinate ring} $B(X, \sL, \sigma)$ by 
\[ B(X, \sL, \sigma) := \bigoplus_{n \geq 0} H^0(X, \sL \otimes \sL^\sigma \otimes \cdots \otimes  \sL^{\sigma^{n-1}}).\]
 (Here, as usual, we define $\sL^{\sigma}:= \sigma^* \sL$.)
If $\sL$ is appropriately ample (the technical term is {\em $\sigma$-ample}, defined in Section~\ref{DATA}), then $B(X, \sL, \sigma)$ is noetherian by \cite[Theorem~1.4]{AV}, and is strongly noetherian by \cite[Proposition~4.13]{ASZ1999}.  

Rogalski and Zhang's result is:

\begin{theorem}\label{thm-RZ}
{\rm (\cite[Theorem~1.1]{RZ2008})}
Let $R = \kk \oplus R_1 \oplus R_2 \oplus \cdots$ be a strongly noetherian graded algebra, generated in degree 1.  Then there is a $\kk$-algebra homomorphism $g: R \to B(X, \sL, \sigma)$, where $X$ is a projective scheme, $\sigma \in \Aut_\kk(X)$, and  $\sL$ is a $\sigma$-ample invertible sheaf on $X$.   Further,   $g$ is surjective in large degree and satisfies a universal property.
\end{theorem}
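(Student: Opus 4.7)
The plan is to construct $X$ as a fine moduli space for point modules over $R$, exploit the universal family to extract both the data $(X, \sL, \sigma)$ and the homomorphism $g$, and then verify $\sigma$-ampleness and surjectivity using the strongly noetherian hypothesis in an essential way.

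First, I would set up truncated parameter schemes. For each $n$, the functor parameterizing truncated point modules of length $n+1$ (quotients of the form $R/I$ with $\dim (R/I)_i = 1$ for $0 \leq i \leq n$ and $0$ beyond) is represented by a closed subscheme $X_n$ of a product of projectivizations of the $R_i$, giving a projective scheme $X_n$ together with a universal truncated point module. Truncation induces morphisms $X_{n+1} \to X_n$. The strongly noetherian hypothesis is exactly what allows one to invoke Artin--Zhang style arguments (or the base-change behavior of flat families) to conclude that this inverse system stabilizes: there exists $N$ such that for all $n \geq N$ the map $X_{n+1}\to X_n$ is an isomorphism. Set $X := X_N$; then a limit argument upgrades the universal truncated family to a universal genuine point module, i.e., a graded $\sO_X$-module $\sM = \bigoplus_{n \geq 0} \sM_n$ with $\sM_0 = \sO_X$, each $\sM_n$ an invertible sheaf, and with an $R$-action making $\sM$ a family of point modules.

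Next, I would define the automorphism and the line bundle. The shift $M \mapsto (M_{\geq 1})[1]$ sends point modules to point modules; on the moduli level this produces a morphism $\sigma: X \to X$, and the fact that shifting is invertible together with projectivity of $X$ (and the strongly noetherian stabilization applied on both sides) forces $\sigma$ to be an automorphism. Setting $\sL := \sM_1$, the multiplication maps of the universal family, combined with the compatibility of the shift, give canonical identifications $\sM_n \cong \sL \otimes \sL^\sigma \otimes \cdots \otimes \sL^{\sigma^{n-1}}$. The $R$-action on $\sM$ is determined by the maps $R_n \otimes \sO_X = R_n \otimes \sM_0 \to \sM_n$, and taking global sections yields $R_n \to H^0(X, \sM_n) = B(X, \sL, \sigma)_n$. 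Multiplicativity of the $R$-action translates into multiplicativity of the resulting $g: R \to B(X,\sL,\sigma)$.

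The two technical hearts of the argument are $\sigma$-ampleness of $\sL$ and surjectivity of $g$ in large degree; these are where I expect the main obstacle. For $\sigma$-ampleness, the natural route is to show that $\sL$ separates points and tangent vectors after enough twists by $\sigma$, using the fact that the $\sM_n$ come from a moduli problem for modules generated in degree $0$, so their global sections must be rich enough to detect the moduli: more precisely, the image of $R_n$ in $H^0(X, \sM_n)$ separates the closed points of $X$ by the very definition of $X$ as a parameter space, and because $R$ is strongly noetherian the cohomological obstructions vanish asymptotically (Artin--Zhang / Keeler-type criteria). For surjectivity in large degree, since $R$ is generated in degree $1$, it suffices to show that the image $g(R_1) \subseteq H^0(X,\sL)$ generates $B$ in large degree as an algebra over itself under the twisted multiplication; this follows once one knows that $B$ is noetherian (which holds by $\sigma$-ampleness and Artin--Van den Bergh) and that $g$ has finite-dimensional cokernel in each degree (which one extracts from noetherianity of $R \otimes_\kk B$, a use of the strong noetherian hypothesis).

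Finally, the universal property should read: any graded homomorphism from $R$ to a twisted homogeneous coordinate ring $B(X',\sL',\sigma')$ with $X'$ parameterizing a family of $R$-point modules factors uniquely through $g$. I would deduce this directly from the universal property of the moduli space $X$: such a homomorphism is equivalent to a family of point modules over $X'$, which by universality of $X$ corresponds to a morphism $X' \to X$ intertwining $\sigma'$ with $\sigma$ and pulling back $\sL$ compatibly, and this data is exactly what is needed to factor $g$ through the induced map on twisted homogeneous coordinate rings.
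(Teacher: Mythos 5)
First, a point of orientation: the paper does not prove this statement at all --- it is quoted verbatim from \cite[Theorem~1.1]{RZ2008} as background --- so the only fair comparison is with the Rogalski--Zhang argument, parts of which the paper recapitulates (the point parameter ring $B$ and the map $\theta$ in Section~\ref{PPR}, the stabilization of the truncated point schemes via \cite[Corollary~E4.12]{AZ2001} in the proof of Theorem~\ref{thm-universal2}, and the $\sigma$-ampleness mechanism in Theorem~\ref{thm-ample}). Your skeleton does match that approach: truncated point schemes $Y_n$, stabilization from the strongly noetherian hypothesis, $\sigma$ from the shift $\psi\phi^{-1}$ once both truncation and shift maps are isomorphisms, $\sL$ from $\sO(1,\dots,1)$ on the stabilized scheme, and $g$ from global sections of the universal family.

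However, at exactly the two places you call the ``technical hearts'' there are genuine gaps. For $\sigma$-ampleness, separating points and tangent vectors only yields that each individual $\sL_n=\sL\otimes\sL^\sigma\otimes\cdots\otimes\sL^{\sigma^{n-1}}$ is (very) ample; $\sigma$-ampleness is the much stronger statement that $H^j(X,\sF\otimes\sL_n)=0$ for every coherent $\sF$, all $j\geq 1$ and $n\gg 0$, and ampleness of all the $\sL_n$ does not imply it. The missing ingredient --- which is the crux of the Rogalski--Zhang proof and is also how Theorem~\ref{thm-ample} of this paper proceeds --- is that $\sigma$ acts \emph{quasi-unipotently} on $\Num(X)$, which one gets from subexponential growth of the algebra (Stephenson--Zhang \cite{SteZh} plus \cite[Proposition~3.5]{RZ2008}), after which Keeler's criterion \cite[Theorem~1.3]{Keeler2000} applies; ``cohomological obstructions vanish asymptotically because $R$ is strongly noetherian'' is not an argument, and without quasi-unipotence no $\sigma$-ample sheaf exists at all. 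For surjectivity in large degree, your argument is circular: every graded piece $B_n$ is finite-dimensional, so ``finite-dimensional cokernel in each degree'' carries no content, and noetherianity of $B$ together with $g(R)$ being a degree-one-generated subalgebra cannot force equality in large degree --- the na\"ive blowups $R(X,\sL,\sigma,P)\subsetneqq B(X,\sL,\sigma)$ studied in this very paper are noetherian, degree-one-generated subalgebras that differ from $B$ in every degree. The actual proof of surjectivity in \cite{RZ2008} is a cohomological argument on the stabilized point scheme (using the identification of the $\sM_n$ and vanishing coming from ampleness), and it is precisely where strong noetherianity (through stabilization) is used in an essential, non-formal way; your proposal does not supply this step. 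Finally, the universal property you formulate (factorization of maps to twisted homogeneous coordinate rings carrying families of point modules) is not the one in \cite{RZ2008}, which is phrased via $\ker g$ annihilating all point modules (compare Corollary~\ref{cor-ann} and Theorem~\ref{thm-universal1} here), and you would still need to prove your version rather than assert it follows from ``universality of the moduli space.''
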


The universal property in the theorem means, roughly, that the kernel of $g$ annihilates any point module.   We prove that $g$ satisfies a different universal property in Theorem~\ref{thm-universal2}.  

The projective scheme $X$ in Theorem~\ref{thm-RZ} is the parameter scheme for point modules; it is a consequence of $R$ being strongly noetherian that point modules are parameterized by a scheme rather than a more exotic object \cite[Corollary~E4.5]{AZ2001}.    If $R$ is a (3-dimensional)  Sklyanin algebra, then $X$ is an elliptic curve and we recover the well-known embedding \cite{ATV1990} of an elliptic curve in a noncommutative $\PP^2$.  

Ring theory would be less interesting if all noetherian algebras were strongly noetherian.  Fortunately, this is not true, as shown by Rogalski \cite{R-generic}.  For example, let $X$ be a variety of dimension $\geq 2$ and let $ \sigma, \sL$ be as above.  Let $P \in X$,  and assume that the $\sigma$-orbit of $P$ is  {\em critically dense}:  that is, it is infinite, and any infinite subset  is Zariski-dense in $X$.  (More generally, we may take $P$ to be any 0-dimensional subscheme of $X$ supported at points with critically dense orbits.)    We may use these data to construct a subring
\[ R = R(X, \sL, \sigma, P) \subseteq B(X, \sL, \sigma),\]
by setting $R_n := H^0(X, \sI_P \sL( \sI_P \sL)^\sigma \cdots (\sI_P \sL)^{\sigma^{n-1}})$.  
The ring $R(X, \sL, \sigma, P)$ is called the {\em \nba}\ associated to the data $(X, \sL, \sigma, P)$. By \cite[Theorem~1.1]{KRS} and \cite[Theorem~1.1]{RS-0}, $R(X, \sL, \sigma, P)$ is always noetherian but never strongly noetherian, and thus Theorem~\ref{thm-RZ} does not apply. However,  there is trivially a map from $R(X, \sL, \sigma, P)$ to the overring $B(X, \sL, \sigma)$.  It is a bit embarrassing that current theory does not recover this inclusion.  

In this paper we remedy the embarrassment.    
Our results apply, however, to a much larger class of rings:  to all those whose point modules have the geometry of a \nba.   
Let $S$ be a noetherian connected graded algebra, generated in degree 1.  We seek to answer two questions.    First, is there a canonical map from $S$ to a twisted homogeneous coordinate ring?  
Second,  what can be said about the image of $S$ under this map?

To describe our main theorem, we establish notation for functors of points over noncommutative rings.  
Let  $R = \kk \oplus R_1 \oplus \cdots$ be a finitely generated graded algebra generated in degree 1.
We will denote the functor of 
(embedded) $R$-point modules by $F$.  That is,  if $A$ is a
commutative $\kk$-algebra, then $F( A)$ is the set of  isomorphism classes of graded cyclic right 
$R\otimes_{\kk} A$-modules $M$ such that each $M_n$ is a rank 1 projective
$A$-module. 
Let $M, N$ be $R$-point modules.  We say that 
$M \sim N$ if $M_{\geq n} \cong N_{\geq n}$ for $n \gg 0$; that is, $M$ and $N$ are equal as objects of noncommutative projective geometry, or, more formally,  of the category $\rQgr R$ of ``tails'' of graded modules (see \cite{AZ2001}).     
Let  $\H $ be the functor of ``tails of points''---that is, equivalence classes of modules whose Hilbert series is eventually equal to $1/(1-t)$ (see Section \ref{GENERAL}, page \pageref{tails of points} for a precise definition).
 Let $\pi:  F\to \H$ be the natural map.  

If we are willing to move beyond the category of schemes, the functor $F$ is always represented by a geometric object, which we call $Y_\infty$, and refer to as the {\em point space} of $R$; see Section~\ref{GENERAL}.  The geometry of $\H$ is often more complicated. For example, let $R(X, \sL, \sigma, P)$ be a \nba\ as above.    In an earlier paper \cite[Theorem~1.3]{NS1}, the authors proved that in this situation in fact  $ X$  {\em corepresents} the functor $F/\sim$ (in the category of finite type schemes), and we show here (Proposition~\ref{prop:existence of q}) that $X$ also corepresents $\H$.  In contrast, by \cite[Theorem~1.1]{KRS}, $\H$ is not represented by any scheme of finite type.

Our main result may be thought of as   a converse of sorts to  \cite[Theorem~1.3]{NS1}.  We show that, under a restriction on the geometry of point modules,   the existence of a  scheme $X$ that corepresents $\H$ ensures a canonical map from $R$ to a twisted homogeneous coordinate ring on $X$. Further,  we show that the image of $R$ is (up to finite dimension) a \nba.  More specifically, we have:

\begin{theorem}\label{thm-main}
Let $\kk$ be an uncountable algebraically closed field, and let $R= \kk \oplus R_1 \oplus \cdots $ be a  noetherian graded $\kk$-algebra generated in degree 1. Let $Y_{\infty}$
be the point space of $R$, as above.  
Suppose the following:
\begin{itemize}
 \item[(i)]
there is a commutative
diagram of morphisms
\[ \xymatrix@C=5pt{
& F \cong  Y_{\infty} \ar[ld]_{\pi} \ar[rd]^{p} \\
\H \ar[rr] && X, }\]
  where $X$ is  a projective scheme that corepresents $\H$ through the  morphism $\xymatrix{H \ar[r]&X.}$
\end{itemize}
Suppose further that:
\begin{itemize}
\item[(ii)]  $X$ is a variety of dimension $\geq 2$ that is either a surface or locally factorial at all points in the indeterminacy locus of $p^{-1}$;
\item[(iii)] the map  $\H \to X$ is bijective on $\kk$-points; 
\item[(iv)]  the indeterminacy locus of $p^{-1}$ 
consists  (set-theoretically) of
countably many points.
\end{itemize}
Then:
\begin{enumerate}
\item there are an automorphism $\sigma$ of $X$, a $\sigma$-ample invertible sheaf $\sL$ on $X$, and a $\kk$-algebra homomorphism $g: R \to  B(X, \sL, \sigma)$, universal for maps from $R$ to birationally commutative algebras; 
\item  there is a 0-dimensional subscheme $P$ of $X$, supported at points with critically dense orbits, so that the image of $g$ is equal (in large degree) to the \nba\ $R(X, \sL, \sigma, P)$.  
\end{enumerate}
We further obtain:
\begin{enumerate}
\item[(3)] the point space $Y_\infty$ is noetherian; and
\item[(4)]the indeterminacy locus of $p^{-1}$ is critically dense in $X$.
\end{enumerate}
\end{theorem}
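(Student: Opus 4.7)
The plan is to adapt the overall strategy of \cite{RZ2008}, but to replace strong noetherianity at each step with arguments that exploit the corepresenting property of $X$ and the 0-dimensionality of the indeterminacy locus of $p^{-1}$.

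First I would produce the automorphism $\sigma : X \to X$. The shift-of-tails operation $M \mapsto M[1]_{\geq 0}$ gives endomorphisms of $F$ and of $\H$, and since $X$ corepresents $\H$ this shift descends to a morphism $X \to X$; applying the same reasoning to the inverse shift, and using condition (iii) to identify $\H(\kk)$ with $X(\kk)$, one sees that these two morphisms are mutually inverse on $\kk$-points of the (reduced) variety $X$, so $\sigma$ is an automorphism. The diagram in (i) is then $\sigma$-equivariant in the natural way. Next, I would extract the indeterminacy locus of $p^{-1}$ as a finite set (by (iv)) and upgrade it to a $\sigma$-stable 0-dimensional subscheme $P \subset X$ by reading its scheme structure off the fibres of $p$; this is where hypothesis (ii) first enters, since away from $P$ the map $p$ is an isomorphism, and near $P$ the factoriality/surface assumption is needed to pin down an invertible sheaf structure on the relevant ideal. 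Critical density of the orbit of $P$---part (4)---then follows by contradiction: a proper Zariski accumulation of the orbit would pull back along $p$ to produce an infinite ascending chain in $Y_\infty$, violating noetherianity of $R$.

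Having $\sigma$ and $P$ in hand, I would construct $\sL$ and $g$ using the birational map $p$. Pulling back rational sections along $p^{-1}$, each element of $R_1$ determines a rational section of a fixed invertible sheaf $\sL$ on $X$ whose zeros along $P$ are forced by the indeterminacy of $p^{-1}$, giving $R_1 \to H^0(X, \sI_P \sL)$. The ring structure of $R$ and the $\sigma$-equivariance of the setup then upgrade this to a multiplicative map
\[ g : R_n \;\longrightarrow\; H^0\!\bigl(X, \, \sI_P\sL \cdot (\sI_P\sL)^{\sigma} \cdots (\sI_P\sL)^{\sigma^{n-1}}\bigr) \;\subset\; B(X, \sL, \sigma)_n, \]
which simultaneously yields the homomorphism $g : R \to B(X, \sL, \sigma)$ and an \emph{a priori} factorization through the \nba\ $R(X, \sL, \sigma, P)$. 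The $\sigma$-ampleness of $\sL$ is verified using projectivity of $X$ together with the noetherianity of $R$, in the style of \cite{AV, ASZ1999}.

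The main obstacle I anticipate is showing that the image $g(R)$ \emph{equals} $R(X, \sL, \sigma, P)$ in large degree, not merely sits inside it. This amounts to matching the Hilbert function of $R$ (controlled by the geometry of $Y_\infty$) against $\dim_{\kk} H^0\bigl(X, \sI_P\sL \cdots (\sI_P\sL)^{\sigma^{n-1}}\bigr)$ (controlled by $\sigma$-ample vanishing theorems), and it is precisely here that the critical density established in the previous step becomes essential, in order to invoke the cohomological control of \cite{KRS, RS-0}. Once this equality is in hand, part (3)---noetherianity of $Y_\infty$---follows because $Y_\infty$ is governed by tails of point modules over a ring that agrees in large degree with a noetherian \nba, and the universal property in (1) for birationally commutative targets falls out of the corepresenting property of $X$ applied to point modules of such targets, in the spirit of Theorem~\ref{thm-universal2}.
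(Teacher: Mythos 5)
There is a genuine gap, and it starts at the second step: you read hypothesis (iv) as giving a \emph{finite} indeterminacy locus that can be ``upgraded to a $\sigma$-stable $0$-dimensional subscheme $P$.'' Hypothesis (iv) only gives \emph{countably} many points, and in every motivating example (and by conclusion (4) of the theorem itself) the indeterminacy locus is infinite and critically dense, so it is neither finite nor the support of any $\sigma$-stable $0$-dimensional scheme. In the paper the subscheme $P$ is \emph{not} the indeterminacy locus: it is the base locus of $g(R_1)$, defined only after $g$ and the sheaf $\sL$ have been constructed (as reflexive hulls of pushforwards $(f_n)_*\sM'_n$ of the universal bundles on the truncated point schemes), and only afterwards is the indeterminacy locus identified with $\bigcup_{n\geq 0}\sigma^{-n}(\Supp P)$ (up to finitely many translates). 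Because of this, your assertion that ``away from $P$ the map $p$ is an isomorphism'' is not something you can extract at the outset; establishing the structure of the locus where $p$ fails to be an isomorphism is the main content of the argument, not an input to it.

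The central missing idea is the paper's Theorem~\ref{thm-VIII}: every point where $p^{-1}$ is undefined has \emph{infinite} $\sigma$-orbit. Its proof uses the uncountability of $\kk$ (via Proposition~\ref{prop-contract}) and a gluing construction that exploits corepresentability of $X$, and it is the engine behind Theorem~\ref{thm-g}, i.e.\ $\ker g=\ker\theta'=\ker\theta^e$. That equality is what delivers the universal property of $g$ (by transport from the universal property of $\theta^e$ in Theorem~\ref{thm-universal1}) and what shows $\ker g$ kills all point modules, so that $Y_\infty$ is also the point space of $g(R)$ and noetherianity of $Y_\infty$ can be imported from the na\"ive-blowup results of \cite{NS1}; corepresentability alone, applied to point modules over birationally commutative targets, does not give this. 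Two further steps are substantially harder than you allow: $\sigma$-ampleness of $\sL$ is not obtained ``in the style of \cite{AV,ASZ1999}'' from noetherianity of $R$, but by an intersection-theoretic big-and-nef argument on the strict transforms $X_n$, plus subexponential growth, quasi-unipotence, and Keeler's criterion; and critical density cannot be deduced from ``an infinite ascending chain in $Y_\infty$ violating noetherianity of $R$'' --- the paper first proves mere density of the orbits of $\Supp P$ by a bimodule-algebra argument producing a non-finitely-generated right ideal (Proposition~\ref{prop-Zdense}), and then obtains both the equality $g(R)=R(X,\sL,\sigma,P)$ in large degree \emph{and} critical density simultaneously from Rogalski--Stafford's surjectivity criterion (\cite[Theorem~9.2]{RS-0}), rather than establishing critical density beforehand as your outline assumes.
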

\noindent
See Theorem \ref{thm-final} for a more detailed and technically precise statement of this result.

We show in  Corollary~\ref{cor:hypotheses apply to NBAs} that the hypotheses of   Theorem~\ref{thm-main} hold if $R = R(X, \sL, \sigma, P)$ is itself a \nba\ as above.  Our result thus recovers the inclusion of $R(X, \sL, \sigma, P) $ in $B(X, \sL, \sigma)$.  

The universal property in the theorem means, approximately, that any map from $R$ to a skew polynomial extension of a commutative $\kk$-algebra factors through $g$.
The existence of a factor of $R$ universal for maps from $R$ to birationally commutative algebras holds in substantial generality (see Theorem~\ref{thm-universal1}) and does not require any assumptions on the geometry of the point space $Y_\infty$.   

In \cite{NS1} we drew a parallel 
between $Y_\infty$ and the Hilbert scheme $S^{[n]}$ of $n$ points on $S$.  We claimed there that $Y_\infty$ should be thought of as a ``Hilbert scheme of one point on a noncommutative variety'' and that $X$ should be thought of as a ``coarse moduli space of one point,'' analogous to the symmetric product $\operatorname{Sym}^n(S)$.  In light of this analogy, it is perhaps not surprising that we are not certain whether condition (iii) of the theorem, although it appears highly plausible, is automatically satisfied for reasonable classes of algebras.  Indeed, 
the analogue for moduli of zero-dimensional sheaves  on a smooth commutative surface fails: it is well-known that $S^{[n]} \to \operatorname{Sym}^n(S)$ is a (nontrivial) resolution of singularities for $n > 1$.

We briefly describe the structure of the paper.
In Section~\ref{GENERAL} we  establish notation for point spaces of  algebras and define the point space $Y_\infty$ rigorously.  
In Section~\ref{NBEXAMPLE}, we describe the point space of a \nba\ and show that the hypotheses of Theorem~\ref{thm-main} apply.  
In Section~\ref{PPR} we show, under very weak conditions, that the point space gives rise to a factor of $R$ universal for maps to birationally commutative algebras.  
We believe this result is of independent interest. 
In Section~\ref{COARSE} we assume the existence of  a scheme $X$ that corepresents $\H$.  We construct both the automorphism $\sigma$ of $X$ and a map $g:  R \to \kk(X)[t; \sigma]$.
In Section~\ref{POINTS} we study how points and curves in $X$ lift  to $Y_\infty$.  We prove the key result  that any point where $p^{-1}$ is not defined must have infinite order under $\sigma$.  
In Section~\ref{DATA} we define the rest of the data for the \nba\ $R(X, \sL, \sigma, P)$.  We show that $\sL$ is invertible and $\sigma$-ample, and that the orbits of points in $P$ are dense.
Finally, in Section~\ref{PROOF} we prove Theorem~\ref{thm-main}.

\vspace{1em}

\noindent
{\bf Acknowledgments.}\; The authors are grateful to Hailong Dao, Dan Rogalski, Karl Schwede, Toby Stafford, Ravi Vakil, and Chelsea Walton for helpful conversations.  The first author was partially supported by NSF grants DMS-0757987 and DMS-1159468 and NSA grant H98230-12-1-0216.  The second author was partially supported by an NSF Postdoctoral Research Fellowship, grant DMS-0802935.  Both authors were supported by the NSF grant 0932078 000 at the MSRI program ``Noncommutative Algebraic Geometry and Representation Theory.'' 
 We are grateful to the referee of an earlier version of this paper for  many extremely useful comments. 

\vspace{1em}

\noindent
{\bf Notation.}  
The symbol $\kk$ will always denote an algebraically closed field, often but not always uncountable.  All algebras and schemes will be defined over $\kk$. Modules are by default right modules.
Our convention throughout is that if $f:  X \to Y$ is a morphism of schemes, then $f(X)$ denotes the  scheme-theoretic image of $X$ unless otherwise stated.

\section{Generalities}\label{GENERAL}
In this section, we place the geometry of point schemes on a firmer footing and define the point space $Y_\infty$ of an algebra rigorously. 

A {\em connected graded} $\kk$-algebra is an $\NN$-graded $\kk$-algebra $R= R_0 \oplus R_1 \oplus \cdots$, with $R_0 =\kk$ and all $R_i$ finite-dimensional. 
Let 
$R$ be a connected graded  $\kk$-algebra generated in degree 1.  We use subscript notation to denote base extension; that is, if  $A$ is a commutative $\kk$-algebra, let $R_A:= R\otimes_\kk A$.  Recall that a graded $R$-module $M$ is {\em bounded}  if there is an $n_0$ such that $M_n=0$ for $n\geq n_0$.  If all finitely generated submodules of a module $M$ are bounded, we say that $M$ is {\em torsion}.  If  $M$ has no nonzero bounded submodules, then $M$ is {\em torsion-free}.

An {\em $R_A$-point module} is a graded quotient $M$ of $R_A$ so that  $M_i$ is rank 1 projective over $A$ for all $i \geq 0$; equivalently, $M$ is a graded cyclic $R_A$-module, flat over $A$,  with Hilbert series $1/(1-s)$.  There is a (covariant) functor of points $F:  \mbox{ Commutative $\kk$-algebras } \to \mbox{ Sets }$ given by 
\[F(A) = \{ \mbox{ isomorphism classes of $R_A$-point modules } \}.\]
  We sometimes, without comment, treat $F$ as a  contravariant functor from (affine) Schemes  to Sets.

The functor $F$ is  a Hilbert functor as in \cite{AZ2001}, associated to the Hilbert series $1/(1-s)$.  If we consider the equivalent functor for the Hilbert series $1+s+\cdots+s^n$, it is clear that it is represented by a projective scheme.  Let this scheme be $Y_n$  (for $n \in \NN$).    That is, $Y_n $ parameterizes cyclic $R$-modules $M = M_0 \oplus \cdots \oplus M_n$, where $\dim M_i = 1 $ for $0 \leq i \leq n$.  Note that $Y_0 = \Spec \kk$.

 The truncation map $M \mapsto M/M_n$ gives a morphism $\phi_n:  Y_n \to Y_{n-1}$, and the functor of points $F$ is represented in abstract terms by   
$\displaystyle Y_{\infty} := \varprojlim_{\phi_n} Y_n$.
We refer to $Y_\infty$ as the {\em point space} of $R$. 

  In \cite{NS1}, we considered $Y_{\infty}$ as an {\em fpqc-stack}.  
  Since automorphism groups of closed points are trivial, the terminology {\em fpqc-space} may be better, and we use it in the current paper.

  We give the definitions of these concepts:
  \begin{defn}\label{def-space}(cf.  \cite[Definition~4.1]{NS1})
  An {\em fpqc-space} is a functor $\sh X:    {\rm Schemes }  \to  {\rm  Sets }$ that is a sheaf in the fpqc topology (note that, in particular, objects have no nontrivial automorphisms). 
   If the diagonal 
$\Delta: {\mathcal X}\rightarrow {\mathcal X}\times_{\kk}{\mathcal X}$ is representable, separated, and quasi-compact, and if $\sh X$  admits
a representable surjective fpqc morphism $Z\rightarrow {\mathcal X}$ from a scheme $Z$, then $\sh X$ is {\em fpqc-algebraic}.  
An fpqc-algebraic space 
 $\sX$ is {\em noetherian}  if it admits an fpqc atlas $Z \rightarrow {\mathcal X}$ by a
noetherian scheme $Z$.  
\end{defn}

We trivially have:

\begin{proposition}\label{prop:space}
Let $\dots \to X_{n+1} \to X_n \to \dots$ be a system of projective schemes and morphisms. Then $X_\infty = \varprojlim X_n$ is  an fpqc-space.
\end{proposition}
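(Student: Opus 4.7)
The plan is to realize $X_\infty$ as a limit of representable functors and apply two standard facts: representable functors on schemes are fpqc sheaves, and the category of fpqc sheaves of sets is closed under limits. Concretely, for each $n$ the projective scheme $X_n$ represents a functor $h_n$ on Schemes via $h_n(T) = \Hom_\kk(T, X_n)$, and the transition morphisms $X_{n+1} \to X_n$ induce natural transformations $h_{n+1} \to h_n$. By the universal property of the inverse limit, the functor $X_\infty$ assigns to $T$ the set of compatible systems of $\kk$-morphisms $T \to X_n$, that is, $X_\infty(T) = \varprojlim_n h_n(T)$.

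First I would invoke Grothendieck's faithfully flat descent theorem (see, e.g., SGA~1, Exp.~VIII, or Vistoli's notes on descent) to conclude that each $h_n$ is an fpqc sheaf: the representable presheaf attached to any scheme is a sheaf in the fpqc topology, equivalently, the fpqc topology is subcanonical on the category of schemes. This is the only place any geometric input enters, and projectivity of the $X_n$ plays no role beyond ensuring that each $X_n$ is a scheme.

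Next I would observe that limits of fpqc sheaves of sets are fpqc sheaves. For any fpqc cover $\{T_i \to T\}$, the sheaf axiom is the assertion that
\[ \sh X(T) \lra \prod_i \sh X(T_i) \rightrightarrows \prod_{i,j} \sh X(T_i \times_T T_j) \]
is an equalizer, which is itself a limit diagram in $\mathrm{Sets}$. Since limits commute with limits, the equalizer diagram for $X_\infty$ at any fpqc cover is the termwise limit (over $n$) of the exact equalizer diagrams for the $h_n$, hence is itself exact. Finally, since $X_\infty$ is by construction a functor to Sets, the parenthetical condition in Definition~\ref{def-space} that objects have no nontrivial automorphisms is automatic.

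There is essentially no obstacle to overcome; as the word ``trivially'' preceding the statement suggests, the proposition reduces to unwinding the inverse limit and citing descent. The only point demanding even cursory care is the exchange-of-limits argument used to deduce the sheaf condition for $X_\infty$ from the sheaf conditions for the $h_n$, and this is a formal consequence of the fact that limits commute with limits in $\mathrm{Sets}$.
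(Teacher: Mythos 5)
Your proof is correct and takes essentially the same approach as the paper, which simply states that $X_\infty$ is a limit of sheaves of sets and hence a sheaf of sets; you unpack the two ingredients (fpqc subcanonicity for each $h_n$ via descent, and closure of sheaves under limits via commuting limits) that the paper treats as implicit.
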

\begin{proof}
This follows since the sheaf $X_\infty$ is the limit of sheaves of sets, hence is itself a sheaf of sets.
 \end{proof}

In particular, Proposition~\ref{prop:space} shows that the point space $Y_\infty$  is an fpqc-space.

To give a morphism from  a scheme $W$ to $Y_\infty$ is the same as giving  a collection of compatible maps $W \to Y_n$.  
A morphism from $Y_{\infty}$ to $W$ is defined as a morphism of
functors from $Y_{\infty}$ to $h_W$, the functor of points of $W$.    For example, there is a natural  morphism 
$\Phi_n:  Y_{\infty} \to Y_n$ that sends  a point module $M$ to $M/M_{\geq n+1}$.

We may also consider $Y_\infty$  as a proscheme; this is the perspective of \cite{AZ2001}.  However, we will see that these two points of view are equivalent.

\begin{proposition}\label{prop-surj-factor}
Let $\cdots Y_{n+1} \rightarrow Y_n \rightarrow \dots \rightarrow Y_1$ be a
system of projective $\kk$-schemes and proper and scheme-theoretically surjective morphisms
(that is, proper morphisms  $Y_{n+1}\rightarrow Y_n$ with scheme-theoretic image
$Y_n$).  Let 
$\displaystyle Y_\infty := \lim_{\longleftarrow} Y_n$ be the fpqc-space defined as above. 
 Suppose $p: Y_\infty
\rightarrow X$ is a morphism to a scheme $X$ of finite type over $\kk$.  Then
there is a compatible system of factorizations of the morphism $p$ through the
schemes $Y_n$ for all $n$ sufficiently large.  
\end{proposition}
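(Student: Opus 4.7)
The plan is to define, for each $n$, a closed subscheme $\Gamma_n \subseteq Y_n \times X$, namely the scheme-theoretic image of $(\Phi_n, p): Y_\infty \to Y_n \times X$ as a morphism of fpqc-spaces, and to show that for $n \gg 0$ the first projection $\rho_n: \Gamma_n \to Y_n$ is an isomorphism. The desired factorization is then $p_n := \pi_X \circ \rho_n^{-1}: Y_n \to X$, satisfying $p = p_n \circ \Phi_n$, and compatibility among the $p_n$ is automatic from the scheme-theoretic surjectivity of $\phi_{n+1}$.

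First I would define $\Gamma_n$ as the smallest closed subscheme of $Y_n \times X$ through which $(\Phi_n, p)$ factors; this is well-defined since $Y_n \times X$ is noetherian and the factorization property is preserved under intersection (checked on $T$-valued points). After replacing $X$ by a projective compactification if necessary---the resulting $p_n$ will still take values in $X$ because $\Phi_n(Y_\infty)$ is dense in $Y_n$ and $X$ is locally closed in the compactification---we may assume $\rho_n$ is proper. The transition $\phi_{n+1} \times \mathrm{id}_X$ restricts to a scheme-theoretically surjective morphism $\Gamma_{n+1} \to \Gamma_n$, and a Tychonoff argument applied to the inverse system of non-empty projective fibers $\phi_{m,n}^{-1}(y)$ shows that every closed point of $Y_n$ lifts to a closed point of $Y_\infty$, so that $\rho_n$ is surjective.

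The main step is to prove that $\rho_n$ is a closed immersion for $n \gg 0$. For each compatible family $(y_n)_n \in Y_\infty(\kk)$, the scheme-theoretic fibers $\rho_n^{-1}(y_n) \subseteq X$ form a descending chain of closed subschemes of $X$---via the identification $\{y_{n+1}\} \times X \cong \{y_n\} \times X$, the inclusion $\Gamma_{n+1} \hookrightarrow (\phi_{n+1} \times \mathrm{id})^{-1}(\Gamma_n)$ yields such a scheme-theoretic containment---which stabilizes by noetherianness of $X$. The key claim is that the stable limit is the reduced point $\{p(M)\}$ with $M := (y_n)_n$. Any additional closed point $x$ of the stable limit yields a compatible family $(y_n, x)_n \in \varprojlim \Gamma_n(\kk)$; a compactness argument---using that $Y_\infty(\kk)$, as a closed subset of the compact product $\prod_n Y_n(\kk)$, is compact, and that the image $(\Phi_n, p)(Y_\infty(\kk))$ is therefore closed in $(Y_n \times X)(\kk)$---forces this image to coincide with $\Gamma_n(\kk)$, so that $(y_n, x) = (\Phi_n(M'), p(M'))$ for some $M' \in Y_\infty(\kk)$; the bijection $Y_\infty(\kk) = \varprojlim Y_n(\kk)$ then gives $M' = M$, contradicting $x \neq p(M)$. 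A parallel argument with $T = \Spec \kk[\epsilon]/(\epsilon^2)$-valued points rules out non-reducedness in the stable fiber.

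Finally, the bad locus $B_n := \{y \in Y_n : \rho_n^{-1}(y) \text{ is not a single reduced } \kk\text{-point}\}$ is closed in $Y_n$ by upper semicontinuity and satisfies $\phi_{n+1}(B_{n+1}) \subseteq B_n$ (badness of a fiber is preserved under scheme-theoretic containment in $X$: if a local ring at the stable support is $\kk$, so is every quotient). The key claim forces $\varprojlim B_n$ to have no closed points, and Mittag-Leffler for the inverse system of non-empty proper schemes $\{B_n\}$ with surjective transitions yields $B_n = \emptyset$ for $n \gg 0$; for such $n$, the proper surjective closed immersion $\rho_n$ is an isomorphism. The main obstacle is the key claim identifying the stabilized fiber with a single reduced $\kk$-point; the crux is establishing that the set-theoretic image of $(\Phi_n, p)$ on $\kk$-points is closed in $(Y_n \times X)(\kk)$ (ruling out ``phantom'' closed points of $\Gamma_n$), via compactness of $Y_\infty(\kk)$ in the inverse-limit topology together with the algebraic closedness of $\kk$ and functoriality of $p$.
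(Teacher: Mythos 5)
Your proposal takes a genuinely different route from the paper. The paper works locally: it chooses a compatible tower of finite affine covers $U_n = \Spec T_n$ of the $Y_n$, observes that $U_\infty = \Spec(\varinjlim T_n) \to X$ must factor through $\Spec T_n$ for $n\gg 0$ because $X$ is of finite type, and then descends the resulting map along the fpqc (here just Zariski) cover $U_n \to Y_n$ by checking agreement on $U_n\times_{Y_n}U_n$, again using scheme-theoretic surjectivity of the transitions. Your proposal instead goes global, forming the scheme-theoretic image $\Gamma_n$ of $(\Phi_n, p)$ in $Y_n\times X$ and trying to show that $\Gamma_n\to Y_n$ is eventually an isomorphism by analysing fibers, a bad locus, and a Mittag--Leffler argument.

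The structure of the bad-locus argument is plausible, but there is a genuine gap at the step you yourself flag as ``the crux'': the claim that the set-theoretic image of $(\Phi_n, p)$ on $\kk$-points is Zariski-closed in $(Y_n\times X)(\kk)$, so that it coincides with $\Gamma_n(\kk)$. You argue this by noting that $Y_\infty(\kk)$ is a closed subset of the compact product $\prod_m Y_m(\kk)$, hence compact, hence has closed image. But the Zariski topology on the $\kk$-points of a finite type $\kk$-scheme is only $T_1$, not Hausdorff, and these spaces are noetherian, so \emph{every} subset is quasi-compact. Consequently, quasi-compactness of the image carries no information: continuous images of quasi-compact sets are quasi-compact, not closed, unless the target is Hausdorff. (Passing to the constructible topology makes the spaces compact Hausdorff and does yield that the image is pro-constructible, but pro-constructible is strictly weaker than Zariski-closed, so this does not rescue the step.) Without closedness of the image, a compatible family $(y_n, x)\in\varprojlim\Gamma_n(\kk)$ need not be witnessed by a point $M'\in Y_\infty(\kk)$ with $(\Phi_n,p)(M') = (y_n,x)$, and the key claim that the stable fiber $\bigcap_n\rho_n^{-1}(y_n)$ is the single reduced point $\{p(M)\}$ is left unproved. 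The analogous $\Spec\kk[\epsilon]/(\epsilon^2)$-point argument inherits the same difficulty. The paper's affine-descent proof sidesteps all of this: the decisive finiteness input there is simply that any morphism from $\Spec(\varinjlim T_n)$ to a finite-type scheme factors through some $\Spec T_n$, and no topological compactness of spaces of $\kk$-points is needed.
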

\begin{proof}
 First note that the maps $Y_{n+1}\to Y_n$ are all set-theoretically surjective.
Fix $n_0\geq 1$.  Choose a finite
cover of $Y_{n_0}$ by affine schemes $U_{n_0, \alpha}$ and let $U_{n_0} :=
\coprod U_{n_0,\alpha}$. Now, given a cover $\{U_{n,\alpha}\}$ of $Y_n$ by affines, 
let $\{U_{n+1,\alpha}\}$ be a finite affine cover subordinate to the cover of $Y_{n+1}$ by
the inverse images of the $U_{n,\alpha}$; then, defining 
$U_{n+1} = \coprod U_{n+1,\alpha}$, we get a commutative diagram 
\begin{displaymath}
\xymatrix{Y_{n+1} \ar[d] & U_{n+1} \ar[d]\ar[l]\\
Y_n & U_n \ar[l]}
\end{displaymath}
with  set-theoretically and scheme-theoretically surjective vertical arrows.  Since each
$U_n$ is a finite disjoint union of affine schemes it is itself affine, say $U_n
= \Spec(T_n)$, and since
$U_{n+1}\rightarrow U_n$ is   scheme-theoretically surjective we get
a system of injective ring homomorphisms $T_n \hookrightarrow T_{n+1}
\hookrightarrow \dots$.  
Let $T := \displaystyle \lim_{\longrightarrow} T_n$ and $U _\infty:= \Spec(T)$. 
Then, since $\displaystyle U_\infty  = \lim_{\longleftarrow} U_n$, we get a
morphism
$U_\infty \rightarrow Y_\infty$ making all the squares  
\begin{displaymath}
\xymatrix{Y_{\infty} \ar[d] & U_\infty \ar[d]\ar[l]\\
Y_n & U_n \ar[l]}
\end{displaymath}
commute.  

Since $X$ is of finite type, the composite $U_\infty\rightarrow Y_\infty\rightarrow X$
factors through a finite-type affine scheme $W = \Spec T'$ for some finite-type subalgebra $T'$ of $T$.  As $T$ is the 
union of the $T_n$, we have $T' \subseteq T_n$ for $n \gg 0$.  It follows that $U_\infty\rightarrow X$ factors through 
morphisms $U_n\rightarrow X$ for all $n$ sufficiently large.  

We now use (Zariski) descent to obtain a morphism $Y_n\rightarrow X$ for $n\gg
0$.  To show that $U_n\rightarrow X$ is the composite $U_n\rightarrow
Y_n\rightarrow X$ with a morphism $Y_n\rightarrow X$, it suffices to check that
the composites $U_n\times_{Y_n} U_n \rightarrow U_n\rightarrow X$ with the two
projections $\Pi_i: U_n\times_{Y_n} U_n \rightarrow U_n$ ($i=1,2$) coincide.  By
hypothesis each $Y_n$ is projective, hence separated.  It follows that each
fiber product $U_n\times_{Y_n} U_n$ is again an affine scheme, say
$U_n\times_{Y_n} U_n =\Spec(\tilde{T}_n)$.  Moreover, the map
$U_{n+1}\times_{Y_{n+1}} U_{n+1} \rightarrow U_n \times_{Y_n}U_n$ is set-theoretically and
scheme-theoretically surjective for each $n$ (each of these is a disjoint union
of intersections of the open sets in the affine open covers of $Y_{n+1}$,
respectively $Y_n$, so this follows from the  surjectivity of
$Y_{n+1}\rightarrow Y_n$ since the cover of $Y_{n+1}$ refines the cover of
$Y_n$).  

Observe next that 
$\displaystyle \lim_{\longleftarrow}\, U_n \times_{Y_n} U_n = U_\infty
\times_{Y_\infty} U_\infty$ (cf. Lemma 4.5 of \cite{NS1}).  Because the map
$U_\infty\rightarrow X$ is defined as the composite 
$U_\infty\rightarrow Y_\infty\rightarrow X$, the two maps
$U_\infty\times_{Y_\infty} U_\infty \xrightarrow{\Pi_i} U_\infty \rightarrow X$,
where $\Pi_i$ 
the projection on the $i$th factor, coincide.  But we have:
\begin{lemma}\label{lem-surj-equal}
\mbox{}
\begin{enumerate}
\item
Suppose $f, g: Z\rightrightarrows X$ are two morphisms of schemes and $\Pi: Z'\rightarrow
Z$ is set-theoretically and scheme-theoretically surjective.  Then $f=g$  if and only if
$f \Pi = g\Pi$. 
\item 
\label{scholium}
Let $f:  C \to D$ and $g: D \to C$ be morphisms of schemes, where $f$ is set-theoretically and scheme-theoretically surjective and $gf = \id_C$.  
Then $f$ and $g$ are inverse isomorphisms. 
\end{enumerate}
\end{lemma}
\begin{proof}[Proof of Lemma]
(1)
By hypothesis, $\Pi$ is surjective as a map of topological spaces.  Hence $f=g$ as maps of topological spaces if and only if $f\Pi = g\Pi$ as maps 
of topological spaces.  Now suppose that $f\Pi = g\Pi$ as morphisms of schemes; thus $f=g$ as maps of topological spaces.  We  thus have $(f\Pi)_* \sO_{Z'} = (g\Pi)_* \sO_{Z'}$.  
Scheme-theoretic surjectivity of $\Pi$ means that 
$\mathcal{O}_Z\rightarrow \Pi_*\mathcal{O}_{Z'}$ is injective; thus, so are $f_*\mathcal{O}_Z\rightarrow (f\Pi)_*\mathcal{O}_{Z'}$ and $g_*\mathcal{O}_Z\rightarrow (g\Pi)_*\mathcal{O}_{Z'}$.  

Let $\sA := (f\Pi)_* \sO_{Z'} = (g\Pi)_* \sO_{Z'}$.
Abusing notation, we write $f_* \sO_Z, g_* \sO_Z \subseteq \sA$.
There are sheaf maps $f^{\#}:  \sO_X \to f_* \sO_Z$, $g^{\#}: \sO_X\to g_* \sO_Z$.
Since $f \Pi = g \Pi$, for any open set $U$ of $X$ and $a \in \sO_X(U)$ we have $f^{\#}(a) = g^{\#}(a)$ as elements of $\sA(U)$.  
Thus $f=g$.

(2)
We have $f =fgf$.  By part (1), $fg = \id_D$.  The  statement follows immediately.
\end{proof}
Since $U_\infty\times_{Y_\infty} U_\infty \rightarrow U_n\times_{Y_n} U_n$
is an inverse limit of set-theoretically and scheme-theoretically surjective maps of affine
schemes, and hence is itself set-theoretically and scheme-theoretically surjective,
it follows that the 
composite maps $U_n\times_{Y_n} U_n\xrightarrow{\Pi_i} U_n \rightarrow X$
coincide.   So the map $U_n\rightarrow X$ descends 
to a map $Y_n\rightarrow X$, as desired.
\end{proof}

We can now show that the proscheme and fpqc-space points of view are equivalent.

\begin{corollary}\label{cor-stack=proscheme}
Let $\cdots Y_{n+1} \stackrel{\phi_{n+1}}{\rightarrow} Y_n \rightarrow \dots \rightarrow Y_1$ 
and $\cdots X_{n+1} \rightarrow X_n \rightarrow \dots \rightarrow X_1$ 
be 
systems of  projective schemes and  morphisms.  Let 
$\displaystyle Y_\infty = \lim_{\longleftarrow} Y_j$ and $\displaystyle X_\infty = \lim_{\longleftarrow} X_i$ be the fpqc-spaces defined above.   
Let $Y'_m$ be the  smallest closed subscheme of $Y_m$ through which the natural map $Y_\infty\to Y_m$ factors.
Then 
\[ \Hom(Y_\infty, X_\infty) = \varprojlim_i \varinjlim_j \Hom(Y'_j, X_i).\]
\end{corollary}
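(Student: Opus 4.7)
The plan is to reduce the statement to the case of a single finite-type target and then apply Proposition~\ref{prop-surj-factor} to a cleaner subsystem. Since $X_\infty = \varprojlim_i X_i$ is a limit of sheaves of sets, we automatically have $\Hom(Y_\infty, X_\infty) = \varprojlim_i \Hom(Y_\infty, X_i)$. So the task reduces to proving, for each projective (hence finite-type) scheme $X = X_i$, the identification
\[
\Hom(Y_\infty, X) \;=\; \varinjlim_j \Hom(Y'_j, X).
\]

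Next I verify that $\{Y'_m\}$ itself fits the hypotheses of Proposition~\ref{prop-surj-factor}. For each $m$, the composite $Y_\infty \to Y_{m+1}\to Y_m$ factors through $Y'_m$ by definition, so (by minimality) $\phi_{m+1}$ carries $Y'_{m+1}$ into $Y'_m$, yielding a proper transition map $\tau_{m+1}\colon Y'_{m+1} \to Y'_m$ between projective schemes. This map is scheme-theoretically surjective: if $W \subsetneq Y'_m$ were its scheme-theoretic image, then $Y_\infty \to Y_m$ would factor through $W\hookrightarrow Y'_m\hookrightarrow Y_m$, contradicting the definition of $Y'_m$. A routine check using the closed immersions $Y'_m\hookrightarrow Y_m$ then shows $Y_\infty \cong \varprojlim_m Y'_m$ as fpqc-spaces (the two candidate maps are inverses after projecting to each $Y'_m$, using that $Y'_m\hookrightarrow Y_m$ is a monomorphism).

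With these preparations, Proposition~\ref{prop-surj-factor} applied to $\{Y'_m\}$ and any $p\colon Y_\infty \to X$ produces a factorization $p\colon Y_\infty \to Y'_n \xrightarrow{g_n} X$ for all $n \gg 0$, giving a well-defined map $\Hom(Y_\infty,X) \to \varinjlim_j\Hom(Y'_j,X)$. The inverse sends $g\colon Y'_n\to X$ to $Y_\infty\to Y'_n\xrightarrow{g} X$, and is well-defined on colimit classes by construction. The key step is uniqueness of the extension $g_n$: if $g_1,g_2\colon Y'_n\rightrightarrows X$ both extend $p$, their equalizer $E$ is a closed subscheme of $Y'_n$ (using that $X$ is separated), and $Y_\infty \to Y'_n$ factors through $E$. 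Composing with $Y'_n\hookrightarrow Y_n$ then exhibits $E$ as a closed subscheme of $Y_n$ through which $Y_\infty \to Y_n$ factors; by minimality of $Y'_n$ we get $E=Y'_n$, so $g_1 = g_2$.

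I expect the main subtlety to lie in this uniqueness step, and more broadly in justifying that the condition "$Y_\infty\to Y$ factors (scheme-theoretically) through a closed subscheme $W\subseteq Y$" interacts correctly with the $Y'_m$: specifically, to argue that if $Y_\infty\to Y_m$ takes values in a closed subscheme $W$, then $W \supseteq Y'_m$. This is immediate from the definition of $Y'_m$ as the smallest such subscheme, but one must be a little careful because $Y_\infty$ is not a scheme. Once this minimality is used correctly, everything else is formal manipulation with the universal properties of limits and colimits.
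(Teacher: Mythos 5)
Your proof is correct and follows essentially the same route as the paper: reduce to a single projective target $X_i$ via $\Hom(Y_\infty, X_\infty)=\varprojlim_i \Hom(Y_\infty, X_i)$, replace the system $\{Y_m\}$ by $\{Y'_m\}$ with proper, scheme-theoretically surjective transition maps and $Y_\infty=\varprojlim Y'_m$, and invoke Proposition~\ref{prop-surj-factor}. The only differences are minor: you get the scheme-theoretic surjectivity of $Y'_{m+1}\to Y'_m$ directly from the minimality defining $Y'_m$ rather than via Lemmas~\ref{lem:chain} and~\ref{lem:Yprime}, and you make explicit (via the equalizer-plus-minimality argument) the uniqueness step that the paper leaves implicit in ``it follows directly from Proposition~\ref{prop-surj-factor}.''
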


Before proving this, we give two easy but useful lemmas.

\begin{lemma}\label{lem:chain}
 Let $Z_1 \supseteq Z_2 \supseteq \dots$ be an infinite descending chain of closed subschemes of a noetherian scheme $X$, and let $\phi:X\to Y$  be a morphism of schemes.
Then $\phi(\bigcap_i Z_i) = \bigcap_i \phi(Z_i)$.
\end{lemma}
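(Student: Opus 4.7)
The plan is to invoke the noetherian hypothesis on $X$ to show that the descending chain $Z_1\supseteq Z_2\supseteq\cdots$ actually stabilizes, after which the conclusion becomes nearly automatic. Concretely, I would first recall that closed subschemes of $X$ correspond bijectively to quasi-coherent ideal sheaves $\mathcal{I}_{Z_i}\subseteq \mathcal{O}_X$, with the containments $Z_i\supseteq Z_{i+1}$ translating into reverse containments $\mathcal{I}_{Z_i}\subseteq \mathcal{I}_{Z_{i+1}}$. Since $X$ is noetherian, this ascending chain of ideal sheaves stabilizes, so there is an index $N$ with $Z_i=Z_N$ for all $i\geq N$. In particular $\bigcap_i Z_i=Z_N$ as closed subschemes.

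With stabilization in hand, I would then appeal to monotonicity of the scheme-theoretic image: $Z'\subseteq Z$ implies $\phi(Z')\subseteq \phi(Z)$, which is immediate from the universal property of $\phi(Z)$ as the smallest closed subscheme of $Y$ through which $Z\to Y$ factors (so $Z'\to Z\to Y$ also factors through $\phi(Z)$). Applying this to our chain gives $\phi(Z_1)\supseteq \phi(Z_2)\supseteq\cdots$, and for $i\geq N$ every term equals $\phi(Z_N)$, so $\bigcap_i\phi(Z_i)=\phi(Z_N)=\phi(\bigcap_i Z_i)$, as required. The only point requiring care---and the closest thing to an obstacle---is the bookkeeping to ensure that ``intersection'' and ``image'' are consistently interpreted in the scheme-theoretic sense fixed by the paper's conventions; there is no substantive difficulty beyond this.
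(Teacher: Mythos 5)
Your proof is correct and takes essentially the same route as the paper's: both arguments use noetherianness of $X$ to stabilize the chain (so $\bigcap_i Z_i$ equals some $Z_N$), then use monotonicity of the scheme-theoretic image to identify both sides with $\phi(Z_N)$. The paper's version is merely more terse, sandwiching $\phi(Z_N) \subseteq \bigcap_i \phi(Z_i) \subseteq \phi(Z_N)$ rather than spelling out the two halves separately.
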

\begin{proof}
 As $X$ is noetherian,   $Z_k = \bigcap_i Z_i$ for some $k$.  But then we have:
\[ \phi(Z_k) = \phi(\bigcap_i Z_i) \subseteq \bigcap_i \phi(Z_i) \subseteq \phi(Z_k).\]
\end{proof}

\begin{lemma} \label{lem:Yprime}
Assume the hypotheses and notation of Corollary~\ref{cor-stack=proscheme}.  
 For $m \in \NN$,  let $Y_m'' = \bigcap_{j \geq 0} \phi^j(Y_{m+j})$, where as usual $\phi^j(Y_{m+j})$ means the scheme-theoretic image.  
Then $Y_m' = Y_m''$ for all $m$.
\end{lemma}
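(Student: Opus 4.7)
The plan is to prove both inclusions $Y_m' \subseteq Y_m''$ and $Y_m'' \subseteq Y_m'$ separately, with the harder reverse inclusion reduced to Proposition~\ref{prop-surj-factor} after identifying $Y_\infty$ with $\varprojlim_n Y_n''$.

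First I would establish three facts about the system $\{Y_n''\}$. (a) The scheme-theoretic images $\phi^j(Y_{m+j})$ form a descending chain of closed subschemes of $Y_m$: by functoriality of scheme-theoretic image under composition, $\phi^{j+1}(Y_{m+j+1}) = \phi^j(\phi_{m+j+1}(Y_{m+j+1})) \subseteq \phi^j(Y_{m+j})$, and this chain stabilizes because $Y_m$ is noetherian, so $Y_m''$ is a well-defined closed subscheme. (b) The transition morphisms $Y_{n+1}'' \to Y_n''$ are proper (as restrictions of maps between projective schemes) and scheme-theoretically surjective: applying Lemma~\ref{lem:chain} to $\phi_{m+1}\colon Y_{m+1}\to Y_m$ and the chain defining $Y_{m+1}''$ inside $Y_{m+1}$ gives $\phi_{m+1}(Y_{m+1}'') = Y_m''$, and set-theoretic surjectivity follows since a proper morphism has closed image. (c) $Y_\infty = \varprojlim_n Y_n''$ as fpqc-spaces: for any test scheme $T$ and any $T$-point $(\psi_n)_n$ of $Y_\infty$, the identity $\psi_n = \phi^j \circ \psi_{n+j}$ shows $\psi_n$ factors through $\phi^j(Y_{n+j})$ for every $j$, hence through $Y_n''$.

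For the inclusion $Y_m' \subseteq Y_m''$: fact (c) implies the natural map $Y_\infty \to Y_m$ factors through $Y_m''$, so minimality of $Y_m'$ yields the inclusion. For the reverse inclusion $Y_m'' \subseteq Y_m'$: the system $\{Y_n''\}_{n \geq m}$ satisfies the hypotheses of Proposition~\ref{prop-surj-factor} by facts (a)--(c), and $Y_m'$ is a finite-type scheme (a closed subscheme of the projective $Y_m$), so the natural map $Y_\infty \to Y_m'$ factors through some $Y_n'' \to Y_m'$ for $n$ sufficiently large. Therefore the composite $Y_n'' \hookrightarrow Y_n \to Y_m$ lands in $Y_m'$, so its scheme-theoretic image is contained in $Y_m'$. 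But that image equals $\phi^{n-m}(Y_n'') = \phi^{n-m}(\phi^k(Y_{n+k})) = \phi^{n-m+k}(Y_{n+k}) = Y_m''$ for $k$ sufficiently large (using associativity of scheme-theoretic image under composition and the definition of $Y_m''$); hence $Y_m'' \subseteq Y_m'$.

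The main subtlety is fact (c), the identification $Y_\infty = \varprojlim_n Y_n''$ of fpqc-spaces, since one must verify that every $T$-point of $Y_\infty$ factors scheme-theoretically (not merely set-theoretically) through each $Y_n''$. Once (c) is in hand, the remainder reduces to a direct application of Proposition~\ref{prop-surj-factor} combined with the associativity and functoriality of scheme-theoretic images.
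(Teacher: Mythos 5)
Your facts (a)--(c), the inclusion $Y'_m \subseteq Y''_m$, and the final computation $\phi^{n-m}(Y''_n) = Y''_m$ are all correct and essentially match the paper's steps (the paper also gets the scheme-theoretic surjectivity of the system $\{Y''_j\}$ from Lemma~\ref{lem:chain}). The gap is the sentence ``therefore the composite $Y''_n \hookrightarrow Y_n \to Y_m$ lands in $Y'_m$.'' Proposition~\ref{prop-surj-factor} only produces \emph{some} morphism $g_n\colon Y''_n \to Y'_m$ with $g_n \circ \pi_n = p$, where $\pi_n\colon Y_\infty \cong \varprojlim Y''_j \to Y''_n$ is the projection; it does not say that $g_n$, followed by $Y'_m \hookrightarrow Y_m$, equals $\phi^{n-m}|_{Y''_n}$. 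The two maps $Y''_n \rightrightarrows Y_m$ do agree after precomposition with $\pi_n$, but to cancel $\pi_n$ (equalizer argument, target separated) you need the scheme-theoretic image of $\pi_n$ to be all of $Y''_n$. By definition that image, viewed in $Y_n$, is the image of $\Phi_n$, i.e.\ $Y'_n$ --- so the needed fact is precisely $Y'_n = Y''_n$, the statement being proved, and the obvious repair is circular.

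The paper closes exactly this point by a different route: having shown that the transition maps of $\{Y''_j\}$ are proper and scheme-theoretically surjective, it asserts that the projection $\varprojlim Y''_j \to Y''_m$ then has scheme-theoretic image equal to $Y''_m$, and concludes $Y''_m \subseteq \Phi_m(Y_\infty) = Y'_m$ from the containment $\varprojlim Y''_j \subseteq Y_\infty$ (with your fact (c) this containment is an equality, so the surjectivity claim \emph{is} the lemma). That surjectivity is where the real content lies, and it comes not from the statement of Proposition~\ref{prop-surj-factor} but from the construction inside its proof: passing to the affine charts $U_j \to Y''_j$, the map $U_\infty \to U_n$ is $\operatorname{Spec}$ of an injection $T_n \hookrightarrow T = \varinjlim T_j$ (injectivity uses the scheme-theoretic surjectivity of the transitions), so $U_\infty \to Y''_n$ has scheme-theoretically dense image and hence so does $\pi_n$. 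If you insert that argument (or quote the paper's claim that the limit of a scheme-theoretically surjective system surjects scheme-theoretically onto each term) in place of the unjustified identification, your proof goes through and is then essentially the paper's.
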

\begin{proof}
Let $\Phi_m:  Y_\infty \to Y_m$ be the natural map.
For any $j \in \NN$, we have $Y'_m = \Phi_m(Y_\infty)= \phi^j \Phi_{m+j} (Y_\infty) \subseteq \phi^j(Y_{m+j})$.  
Thus $Y'_m \subseteq Y''_m$.  
For the other direction, by Lemma~\ref{lem:chain}, we have $\phi(Y''_{m+1}) = \bigcap_{j \geq 1} \phi^j(Y_{m+j}) = Y''_m$.
Thus $\dots \to Y''_{m+1} \stackrel{\phi}{\to} Y''_m \stackrel{\phi}{\to} \dots$ is a system of scheme-theoretically surjective proper morphisms.
It is immediate that $\Phi_m(\varprojlim(Y''_j)) = Y''_m$.  
As $\varprojlim Y''_j \subseteq Y_\infty$ we have $Y''_m \subseteq \Phi_m(Y_\infty) = Y'_m$.
\end{proof}

\begin{proof}[Proof of Corollary~\ref{cor-stack=proscheme}]
By definition, 
\[ \Hom(Y_\infty, X_\infty) = \varprojlim_i \Hom (Y_\infty, X_i).\]
Thus it suffices to prove the corollary in the case that $X_\infty = X_i$ is a projective scheme.

For all $n \geq m \in \NN$, we write $\phi^{n-m}:  Y_{n} \to Y_m$ for the induced morphism.  Now, it is easy to see that $Y_\infty = \varprojlim Y'_m$. 
By Lemma~\ref{lem:Yprime}, we have $Y'_m = \bigcap \phi^j(Y_{m+j})$, and it follows from Lemma~\ref{lem:chain} that $\phi:Y'_{m+1} \to Y'_m$ is (scheme-theoretically) surjective.
 That is, we may replace $Y_m$ by $Y'_m$ and assume without loss of generality that the $\phi_k$ are all scheme-theoretically surjective.  But now it follows directly from Proposition~\ref{prop-surj-factor} that $\Hom(Y_{\infty}, X_i) = \varinjlim_j \Hom(Y'_j, X_i)$.  
\end{proof}

Note that 
$\displaystyle \Hom_{\rm{proscheme}} (Y_\infty, X_\infty) = \varprojlim_i \varinjlim_j \Hom(Y_j, X_i)$
(see \cite[Definition~A.2.1]{etalehomotopy})  Thus Corollary~\ref{cor-stack=proscheme} implies that  we  may equivalently think of $X_\infty$ and $Y_\infty$ as proschemes or as fpqc-spaces, as long as we are willing to replace $Y_j$ by $Y'_j$.

In the sequel, given a morphism $f:  Y \to X = \varprojlim X_n$, we let $f_n: Y \to X_n$ be the composition $Y \to X \to X_n$ without comment.
Motivated by Corollary~\ref{cor-stack=proscheme}, we make the following definition:
\begin{defn}\label{def:image}
Let $Y$ be an fpqc-algebraic space and $X$ a scheme, and let $f:Y \to X$ be a morphism.  
The {\em scheme-theoretic image of $f$} is the smallest closed subscheme of $X$ through which $f$ factors; we write it as $f(Y)$, in keeping with our usual convention.
Now suppose that   $X= \varprojlim X_n$, where the $X_n$ are schemes. 
We define $f(Y) =  \varprojlim f_n(Y)$.  
\end{defn}

Throughout the paper, we will  use the following notation for  parameter spaces of point modules. 
\begin{notation}\label{not1}
Let $R$ be a connected graded  $\kk$-algebra generated in degree 1.  Let    $Y_n$  (for $n \in \NN$) be the projective scheme representing graded cyclic $R$-modules with Hilbert  series $1 + s + \cdots + s^n$. 

If $n \geq 1$, there are two morphisms $\phi_n, \psi_n:  Y_n \to Y_{n-1}$, where $\phi_n(M) =
M/M_n$ and  $\psi_n(M) = M[1]_{\geq 0}$.  
(Here $[1]$ denotes degree shift.)
 Clearly $\phi_{n-1} \psi_n = \psi_{n-1} \phi_n$.  
We will write $\phi^k:  Y_{n+k} \to Y_n$ to denote the map
$\phi_{n+1} \phi_{n+2}\cdots \phi_{n+k}$, and similarly for $\psi^k$.

Let $Y_\infty:= \varprojlim_\phi Y_n$.  We refer to $Y_\infty$ as the the {\em point space} of $R$, and to the collection $\{ Y_n, \phi_n, \psi_n \}$ as the {\em point scheme data} of $R$.  
Let $\Psi:  Y_{\infty}\to
Y_{\infty}$ be the endomorphism of $Y_{\infty}$ induced from the maps $\psi_n$;
that is, 
$\Psi(M) =  M[1]_{\geq 0}$.    Let  $\Phi_n:  Y_{\infty} \to Y_n$ be the canonical morphism.

The fundamental category of noncommutative projective geometry is the category $\rQgr R$ of graded $R$-modules modulo torsion $R$-modules, where  a module is {\em torsion} if it is equal to the direct limit of its right bounded submodules.  We will say that $M \sim N$ for two graded modules $M, N$ if their equivalence classes are equal in $\rQgr R$.  

Recall the definition of $F \cong
Y_\infty$, the  functor of embedded point modules of $R$, from the Introduction.     
Let $G = F/\sim$ and let $\H$ be the functor of {\em tails of points}.\label{tails of points} 
More precisely, if $A$ is a commutative $\kk$-algebra, let $\H(A)$ be the set of $\Pic(A)$-orbits of
\begin{multline*}\big\{ \mbox {$\rQgr R_A$-isoclasses of finitely generated graded $R_A$-modules $M$}\\ \mbox{with $M_n$ a rank 1 projective $A$-module for $n \gg 0$} \big\}.\end{multline*}

If $A$ is a commutative $\kk$-algebra, an element of $\H(A)$ is an equivalence class of $R_A$-modules; if $M$ is a representative of the equivalence class we write $[M] \in \H(A)$.
There are  canonical morphisms $F \to G \to \H$; we let  $\pi:  F \to H$ be the composition. Of course $\pi M = [M]$.
\end{notation}
\begin{proposition}
The functor $\H$ is a sheaf in the fpqc topology. 
\end{proposition}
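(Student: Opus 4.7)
The plan is to verify directly that $\H$ satisfies the fpqc sheaf condition: for any faithfully flat map $A \to B$ of commutative $\kk$-algebras, the diagram $\H(A) \to \H(B) \rightrightarrows \H(B \otimes_A B)$ should be an equalizer. I would reduce the problem to three standard descent facts: (a) fpqc descent for graded $R_A$-modules whose graded pieces are (eventually) rank-one projective; (b) fpqc-local detection of graded isomorphisms, which will imply that the $\rQgr$-equivalence relation descends; and (c) the fact that $\Pic$ is itself an fpqc stack.

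For (a), since $R_A = R \otimes_\kk A$, a graded $R_A$-module whose graded pieces are finitely generated over $A$ amounts in each degree to a quasi-coherent sheaf on $\Spec A$ together with $\kk$-linear structure maps; I would apply Grothendieck's descent of quasi-coherent sheaves degree by degree, observing that the degreewise descent cocycles assemble into a single descent datum for the graded $R$-module structure because that structure is $\kk$-linear. Being eventually rank-one projective is fpqc-local, so the descended module still lies in the class counted by $\H$. Part (b) then follows formally: $M \sim N$ in $\rQgr R_A$ means $M_{\geq n} \cong N_{\geq n}$ as graded $R_A$-modules for some $n$, and the existence of such an isomorphism can be detected fpqc-locally because $\Hom$ of finitely presented modules commutes with faithfully flat base change.

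The main obstacle will be (c), handling the $\Pic$-quotient. Given a class in $\H(B)$ whose two images in $\H(B \otimes_A B)$ agree, I would lift to a representative module $M_B$; the two pullbacks to $B \otimes_A B$ then differ (as $\rQgr$-classes) by twist by a line bundle $L \in \Pic(B \otimes_A B)$, and the triple-overlap compatibility over $B^{\otimes_A 3}$ forces $L$ to satisfy the cocycle condition for descent of invertible sheaves. Since invertible sheaves form an fpqc stack, $L$ comes from a line bundle $L_A \in \Pic(A)$; absorbing this twist into the choice of representative promotes the $\rQgr$-descent datum on $M_B$ to a genuine descent datum, which by (a) and (b) yields a descended class whose $\Pic(A)$-orbit is canonical. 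Checking that this construction respects $\Pic$-orbits and is inverse to the pullback $\H(A) \to \H(B)$ is routine and completes the verification.
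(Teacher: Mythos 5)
Your reduction to (a), (b), (c) is a reasonable outline, but the execution of (c)---the $\Pic$-quotient---has a genuine gap. You say that after checking the cocycle condition on the isomorphism class of the twisting line bundle $L \in \Pic(B \otimes_A B)$, "since invertible sheaves form an fpqc stack, $L$ comes from a line bundle $L_A \in \Pic(A)$." That is not what the stack condition gives: the stack axiom lets you descend a line bundle \emph{over $B$} equipped with a strict descent datum, not a line bundle $L$ over $B \otimes_A B$ with vanishing \v{C}ech differential of its isomorphism class. What you actually need, to absorb the twist into the representative $M_B$, is for $L$ to be a coboundary, $L \cong d^0(N) \otimes d^1(N)^{-1}$ for some $N \in \Pic(B)$; the obstruction to this lives in a \v{C}ech $H^1$ with coefficients in the presheaf $\Pic$, which need not vanish. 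And even if $L$ did come from $\Pic(A)$, twisting $M_B$ by the pullback to $B$ would not change the twist over $B \otimes_A B$, since $d^0 \circ \alpha = d^1 \circ \alpha$. A related looseness appears in your (b): faithful flatness of $\Hom$ shows $\Hom_{R_A}(M_{\geq n},M'_{\geq n}) \neq 0$ if $M_B \cong M'_B$, but does not by itself produce an isomorphism over $A$.

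The paper's proof avoids all of this by a normalization. Choose $n$ large so that $M_{\geq n}$ is cyclic generated in degree $n$ with $M_n$ rank one projective, and replace $M$ by $M \otimes_B M_n^{-1}$ (harmless, since $\H(B)$ is a quotient by $\Pic(B)$), so that $M_n \cong B$. Then $d^0(M)_n \cong B\otimes_A B \cong d^1(M)_n$ are both free, and restricting a given isomorphism $d^0(M)_{\geq n}\otimes P \cong d^1(M)_{\geq n}$ to degree $n$ forces $P$ to be trivial---no cocycle argument needed. At that point $d^0(M)_{\geq n}$ and $d^1(M)_{\geq n}$ have the same annihilator of the degree-$n$ generator, hence are the \emph{same} quotient of $R_{B\otimes_A B}[-n]$, and one descends via $M' := \ker(d^0-d^1)$ as in \cite[Lemma~E3.5]{AZ2001}. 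Injectivity is handled analogously: normalize $M_n \cong A \cong M'_n$, observe that the twist over $B$ is forced trivial, and compare presentation ideals; faithful flatness gives $\Ker f = \Ker g$ over $A$ and hence $M_{\geq n}\cong M'_{\geq n}$. So the missing idea in your write-up is the normalization of the degree-$n$ piece, which replaces the cocycle/coboundary analysis by a direct triviality statement.
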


\begin{proof}
Let $\alpha:  A \to B$ be a faithfully flat homomorphism of commutative $\kk$-algebras, and write $d^0, d^1: B \to B\otimes_A B$ for the natural maps.
Recall that $R_A := R \otimes_\kk A$.  Given an $R_A$-module $M$, let $M_B := M \otimes_A B$.
   We must show that
\begin{equation}\label{sequence}
\H(A) \stackrel{\alpha}{\to} \H(B) \rightrightarrows \H(B\otimes_A B)
\end{equation}
is an equalizer.  

First, suppose that $M$ and $M'$ are graded $R_A$-modules representing classes $[M], [M']$  in $\H(A)$ that become equal in $\H(B)$.  Since $M, M'$ are finitely generated and $R$ is generated in degree 1, there exists $n$ such that:
\begin{enumerate}
\item  $M_{\geq n}$ and $M'_{\geq n}$ are $R_A$-modules generated in degree $n$;
\item  $M_{n'}$ and $M'_{n'}$ rank one projective for $n' \geq n$;
\item there is  a rank one projective $B$-module $P$ for which $(M_B)_{\geq n} \otimes_B P \cong (M'_B)_{\geq n}$ as graded $R_B$-modules.  
\end{enumerate}
We may act by $\Pic(A)$ on $M$ and $M'$ to obtain without loss of generality that $M_n \cong A \cong M'_n$; this now forces $P \cong B$ and $(M_B)_{\geq n} \cong (M'_B)_{\geq n}$.
 
Since $M, M'$ are generated in degree $n$, there are  surjective $R_A$-module maps $f:  R_A[-n] \to M_{\geq n}$ and $g:  R_A[-n] \to M'_{\geq n}$.  
Since $(M_B)_{\geq n} \cong (M'_B)_{\geq n} $ is an isomorphism, we have $\Ker f_B = \Ann_{R_B}((M_B)_n) = \Ann_{R_B}((M'_B)_n) = \Ker g_B$.  Faithful flatness of $B_A$ gives that 
$ \Ker f = \Ker f_B \cap R_A = \Ker g_B \cap R_A = \Ker g$, and $M_{\geq n} \cong M'_{\geq n}$.  This  shows that $\alpha: \H(A) \to \H(B)$ is injective.

Now suppose that $M\in H(B)$ has the property that $[d^0(M)] = [d^1(M)]$: that is, there are a rank one projective $B\otimes_A B$-module $P$ and an isomorphism $d^0(M)_{\geq n}\otimes_{B\otimes_AB} P \rightarrow d^1(M)_{\geq n}$.  Again, choose $n$ sufficiently large so that $M_{\geq n}$ is generated in degree $n$ and $M_n$ is a rank one projective $B$-module.  Replacing $M$ by $M\otimes_B M_n^{-1}$ (which does not change the corresponding element of $H(B)$), we may assume $M_n\cong B$, and thus $d^0(M)_n \cong B\otimes_A B \cong d^1(M)_n$; in particular, now $P$ must be the trivial module, and we get an isomorphism $d^0(M)_{\geq n}\cong d^1(M)_{\geq n}$.  
It now follows   as in the proof of \cite[Lemma~E3.5]{AZ2001} that $M' := \ker (d^0 -d^1) \subseteq M$ determines an element of $H(A)$ and $M'_{\geq n} \cong M_{\geq n}$.  Thus \eqref{sequence} is exact at $H(B)$.  
\end{proof}

We note that there is an alternative way to define $\H$.

\begin{lemma}\label{lem:Hdefn}
 The functor $\H$ is the colimit of the system
\beq\label{Psisys} \xymatrix{\dots \ar[r] &F \ar[r]^{\Psi} & F \ar[r]^{\Psi} & \dots}.\eeq
In particular, $\H$ is a {\em cocone} of \eqref{Psisys}: that is, there are morphisms $\pi_n: F \to H$ so that $\pi_0 = \pi$ and $\pi_{n} = \pi_{n+m} \Psi^m$ for all $n, m \in \NN$.
\end{lemma}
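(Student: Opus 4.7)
The plan is to verify the universal property of the colimit directly, rather than going through the sheafification of a presheaf colimit. First I will define the cocone maps $\pi_n \colon F \to \H$ explicitly by $\pi_n(M) := [M[-n]]$, where $M[-n]$ is the graded $R_A$-module with $(M[-n])_i = M_{i-n}$, supported in degrees $\geq n$. Since each $M_j$ is rank $1$ projective, $M[-n]$ satisfies the tails-of-points condition defining $\H$, so $\pi_n(M) \in \H(A)$. The relation $\pi_n = \pi_{n+m} \circ \Psi^m$ then reduces to the computation that $\Psi^m(M)[-(n+m)]$ and $M[-n]$ coincide in all degrees $\geq n+m$ and thus become isomorphic in $\rQgr R_A$; the case $n=0$ gives $\pi_0 = \pi$.

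Next, given any fpqc sheaf $\sG$ equipped with a cocone $\tau_n \colon F \to \sG$ satisfying $\tau_n = \tau_{n+1} \circ \Psi$, I will construct the unique factorization $\eta \colon \H \to \sG$. Given $[N] \in \H(A)$, I choose $n$ large enough that $N$ is generated in degree $n$ with $N_m$ rank $1$ projective for $m \geq n$; I then pass to an fpqc cover $A \to B$ trivializing $N_n \otimes_A B$, form the $R_B$-point module $\tilde{N} := (N \otimes_A B)_{\geq n}[n] \in F(B)$, and declare $\eta([N])$ to be the element of $\sG(A)$ descended from $\tau_n(\tilde{N}) \in \sG(B)$.

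Well-definedness requires several independent checks: (i) the two pullbacks of $\tilde{N}$ to $F(B \otimes_A B)$ are canonically isomorphic as point modules, both arising from $N \otimes_A (B \otimes_A B)$, so $\tau_n(\tilde{N})$ descends by the fpqc sheaf axiom for $\sG$; (ii) after passing to a common refinement of covers, replacing $n$ by $n+m$ replaces $\tilde{N}$ by $\Psi^m(\tilde{N})$, so the cocone relation forces the same image; (iii) a different representative $N'$ with $[N] = [N']$ produces $\tilde{N} \cong \tilde{N}'$ after further refinement, using the $\Pic(A)$-twist built into the definition of $\H$; (iv) two trivializations of $N_n \otimes_A B$ differ by a unit in $B^\times$, which realizes an isomorphism of point modules. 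Compatibility $\eta \circ \pi_n = \tau_n$ follows immediately because $\pi_n(M)$ is represented by $M$ itself as a point module, with no nontrivial trivialization needed. Uniqueness of $\eta$ follows since fpqc-locally every element of $\H(A)$ has the form $\pi_n(M)$ by the construction above.

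The main obstacle is step (i), the descent: one must produce a canonical identification of the two pullbacks $d^0(\tilde{N})$ and $d^1(\tilde{N})$ in $F(B \otimes_A B)$, tracking how the trivialization of $N_n$ propagates through the two tensor factors. The key observation is that $N$ itself is defined over $A$, so both pullbacks arise from the same $A$-module $N_{\geq n}$ and differ only by the comparison of trivializations, which gives an explicit point-module isomorphism. Once this canonical isomorphism is in place, the remaining verifications in (ii)--(iv) are routine bookkeeping, and the universal property then follows from the sheaf axiom on $\sG$.
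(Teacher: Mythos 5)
Your cocone maps $\pi_n(M)=[M[-n]]$ agree with the paper's $\pi_n=\Psi^{-n}\pi$, and that half is fine. The gap is in the universal property: you only verify it against cocones $\tau_n\colon F\to\sG$ where $\sG$ is an fpqc sheaf, because your construction of $\eta$ passes to a cover $A\to B$ trivializing $N_n$ and then needs the sheaf axiom of $\sG$ to descend $\tau_n(\tilde N)$ back to $\sG(A)$. The lemma, as stated and as used later (``the universal property of colimits''), asserts that $\H$ is the colimit in the category of Set-valued functors on commutative $\kk$-algebras: the cocone target is an arbitrary functor, with no sheaf condition, and your argument simply does not produce the factorization in that generality. (For the paper's actual applications the targets are schemes, hence fpqc sheaves, so your weaker statement would limp through there, but only after invoking descent for schemes --- an input the lemma does not need.)

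The missing idea, which is exactly how the paper argues, is that no cover is necessary: since $N_n$ is a rank one projective $A$-module, you can twist over $A$ itself. Choose $k$ with $N_{\geq k}=N_kR_A$ and set $M':=(N_k)^{-1}\otimes_A N_{\geq k}[k]$; then $M'_0\cong A$, $M'$ is cyclic, each graded piece is rank one projective, so $M'\in F(A)$, and because $\H(A)$ is defined as $\Pic(A)$-orbits of $\rQgr$-classes, the twist is harmless: $[M']=\Psi^k[N]$. One then defines $\eta([N]):=\tau_k(M')$ directly, and independence of $k$ and of the representative follows from the cocone identity $\tau_n=\tau_{n+m}\Psi^m$ exactly as in your steps (ii)--(iv). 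This gives the factorization through any functor cocone, is strictly stronger than what you prove, and eliminates the descent bookkeeping (your step (i)) entirely. If you retain your route, you must either restate the lemma as a colimit in the category of fpqc sheaves or add the $\Pic$-twist argument; as written, the proof does not establish the claim.
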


\begin{proof}
Trivially, $\Psi: [M] \mapsto [M[1]]$ is  an automorphism of $\H$.
Let $\pi_n = \Psi^{-n} \pi$.  Then  $\pi_n \Psi^{n-m} = \pi_m$ as needed, since $\Psi\pi = \pi \Psi$.

 We need to show that $\H$ is universal among cocones of \eqref{Psisys}.  So let $H':  \{ \mbox{ commutative $\kk$-algebras } \} \to \{ \mbox{Sets} \}$
be a cocone:  that is, assume there are morphisms $h_n:  F \to H'$ with 
\beq \label{starstar} h_{n+k} \Psi^k = h_n. \eeq
We construct $\alpha:  \H \to H'$.

Let $A$ be a commutative noetherian $\kk$-algebra, and let $M \in \rgr R_A$ be a finitely generated $R_A$-module so that $M_n$ is a rank 1 projective $A$-module for $n \geq n_0$.  
As $R$ is generated in degree 1, there is $k \geq n_0$ so that $M_{\geq k} = M_k R_A$.  
Let $M' = (M_k)^{-1} \otimes_A M_{\geq k}[k]$.  
Note that $M'$ is an  $R_A$-point module, so $M' \in F(A)$, and  $[M'] =  \pi M' = \Psi^k [ M] $.  We define $\alpha[ M] = h_k(M')$.  
By \eqref{starstar} we have $h_{j+k} \Psi^{j} (M') = h_k (M')$, so $\alpha$ does not depend on $k$ and is well-defined. Further, $\alpha$ is clearly functorial in $A$.  
By construction, $\alpha \pi_j = h_j$ for all $j \in \NN$.
Uniqueness of $\alpha$ is immediate.
\end{proof}

The final result of this section, which we will use  often, is due to  Rogalski and Stafford.

\begin{proposition}\label{prop-RS}
{\rm (\cite[Corollary~2.7]{RS})}
Fix $N \in \NN$.  Let $R$ be a noetherian connected graded $\kk$-algebra generated in degree 1, and adopt Notation~\ref{not1}.  For each $n \geq N$, let $L_n$ be a finite set of (not necessarily closed) points of $Y_n$, such that $\phi_n(L_n) \subseteq L_{n-1}$, or respectively $\psi_n(L_n) \subseteq L_{n-1}$.  Then for $n \gg N$, the cardinality $\# L_n$ is constant, and $\phi_n$, or respectively $\psi_n$,  gives a bijection between $L_n$ and $L_{n-1}$.  Moreover, if $x \in L_{n-1}$ for any such $n$, then $\phi_{n}^{-1}$, or  respectively $\psi_{n}^{-1}$, is defined and is a local isomorphism at $x$.  \qed
\end{proposition}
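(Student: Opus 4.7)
The plan is to prove the statement for $\phi_n$; the argument for $\psi_n$ is entirely analogous.  I would first reduce to a stabilized, surjective subsystem of the $L_n$, then bound the cardinalities using the noetherian hypothesis on $R$, and finally upgrade the resulting set-theoretic bijection to a scheme-theoretic local isomorphism.

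First, stabilize.  For each $n \geq N$, set
\[
L_n^{\mathrm{st}} := \bigcap_{k \geq 0} \phi^k(L_{n+k}).
\]
Because each $L_n$ is finite, this descending chain of subsets of $L_n$ stabilizes, and by construction $\phi_n: L_n^{\mathrm{st}} \to L_{n-1}^{\mathrm{st}}$ is surjective for every $n > N$.  Any element of $L_n \setminus L_n^{\mathrm{st}}$ eventually leaves every stabilized set under forward iteration, so it suffices to prove the proposition for the $L_n^{\mathrm{st}}$.  After this replacement I may assume $\phi_n(L_n) = L_{n-1}$ for all $n > N$, so that the cardinalities $\#L_n$ are non-decreasing in $n$.

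The main step, and the main obstacle, is to show that $\#L_n$ is bounded---equivalently, eventually constant.  Suppose not.  Then by K\"onig's lemma applied to the inverse system $(L_n, \phi_n)$ of nonempty finite sets with surjective transitions, the inverse limit $\varprojlim_n L_n \subseteq Y_\infty$ is infinite.  Each element of this limit is a compatible thread of (possibly non-closed) points of the $Y_n$, and hence determines a graded cyclic $R$-module (over an appropriate residue field) whose truncations lie in the prescribed $L_n$.  Having infinitely many non-isomorphic such modules whose degree-$N$ truncations all lie in the fixed finite set $L_N \subseteq Y_N$ contradicts the noetherian hypothesis on $R$ via the Hilbert-scheme noetherianity arguments of Rogalski and Stafford in \cite[Section~2]{RS}; the point is that such an infinite family can be assembled into a graded $R$-module whose lattice of submodules violates the ascending chain condition.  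Hence $\#L_n = \#L_{n-1}$ for $n \gg N$, and the surjection $\phi_n: L_n \to L_{n-1}$ of finite sets is eventually a bijection.

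Finally, promote the set-theoretic bijection to a scheme-theoretic local isomorphism.  Fix $x \in L_{n-1}$ and let $x' \in L_n$ denote its unique preimage.  Since $Y_n$ represents the Hilbert functor of length-$(n{+}1)$ truncated point modules and $\phi_n$ is the corresponding forgetful morphism, the scheme-theoretic fibre $\phi_n^{-1}(x)$ parameterizes one-dimensional graded extensions of $x$ by a rank-one piece in degree $n$.  A standard small-extension (or tangent-space) computation, together with the constancy of cardinalities on a Zariski neighbourhood of $x$ established above, forces this fibre to be reduced and zero-dimensional; hence $\phi_n$ is unramified at $x'$.  Combined with properness and the set-theoretic bijectivity in a neighbourhood of $x$, Zariski's main theorem then yields that $\phi_n$ is a local isomorphism at $x'$, so $\phi_n^{-1}$ is defined as a morphism in a Zariski neighbourhood of $x$, as required.
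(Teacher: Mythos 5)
The paper does not actually prove this statement: it is quoted from Rogalski and Stafford, with the reference to \cite[Corollary~2.7]{RS} standing in place of a proof. So your write-up would have to supply that argument, and it does not: at the decisive moment you appeal to ``the Hilbert-scheme noetherianity arguments of Rogalski and Stafford in [RS, Section~2]'', which is circular, and the contradiction you claim there is not a contradiction. Having infinitely many pairwise non-isomorphic point modules (threads) whose degree-$N$ truncations lie in a fixed finite subset of $Y_N$ is perfectly compatible with $R$ being noetherian: for a \nba\ $R(X,\sL,\sigma,P)$ --- noetherian but not strongly noetherian, and the motivating class of algebras in this paper --- the fibre of $\Phi_N\colon Y_\infty \to Y_N$ over a suitable closed point is infinite, because a positive-dimensional exceptional locus is created over an orbit point at some stage $m>N$ (cf.\ $X_\infty \cong \varprojlim \mathrm{Bl}_{\sI_n}X$ in Section~\ref{NBEXAMPLE}), and every point of that fibre is a thread through the same truncation. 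Noetherianity of $R$ also gives no control over modules over $R_K$ for the field extensions $K$ arising as limits of residue fields --- that is exactly the failure of strong noetherianity --- and ``assembling'' the family into a graded $R$-module whose submodule lattice violates ACC is not a construction: an infinite direct sum is not finitely generated, so no ascending chain condition on a noetherian module is contradicted. The real content of \cite[Corollary~2.7]{RS} is to convert a failure of the conclusion along a thread into an honest strictly ascending chain of right ideals of $R$ itself (equivalently, a defining ideal needing new generators in unboundedly many degrees, descended from $R_K$ to $R$); nothing in your argument does this, and this is where the whole difficulty lies.

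The other two steps also have gaps. In the final step you invoke ``the constancy of cardinalities on a Zariski neighbourhood of $x$'' and ``set-theoretic bijectivity in a neighbourhood of $x$'', but nothing of that kind was established: all you have is a bijection between the finite sets $L_n$ and $L_{n-1}$, which says nothing about the other points of the scheme-theoretic fibre $\phi_n^{-1}(x)$. A priori that fibre is a projective space of positive dimension (for na\"ive blowups it genuinely is, at finitely many stages along any thread), and excluding this for $n \gg 0$ is precisely the content of the proposition; it is where noetherianity must enter, via the criterion used in the proof of Proposition~\ref{prop-VII} that the truncation maps are local isomorphisms at a point exactly when the defining ideal of the corresponding point module acquires no new generators in the relevant degrees. (Even granting trivial reduced fibres, ``finite plus bijective plus Zariski's main theorem'' does not yield an isomorphism of local rings --- think of a normalization map --- so the upgrade to a local isomorphism needs that criterion in any case.) Finally, the opening reduction to the stabilized sets $L_n^{\mathrm{st}}$ is not justified as written: the conclusion concerns the original $L_n$, and to recover it you must first prove the statement for $L^{\mathrm{st}}$, then use the local-isomorphism conclusion to see that $\phi_n(L_n \smallsetminus L_n^{\mathrm{st}}) \subseteq L_{n-1}\smallsetminus L_{n-1}^{\mathrm{st}}$ for $n \gg 0$, and run a K\"onig-type argument to force $L_n = L_n^{\mathrm{st}}$ eventually; ``eventually leaves every stabilized set'' is not an argument. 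That last point is repairable, but the central noetherian step is missing.
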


\section{Point spaces of na\"ive blowups}\label{NBEXAMPLE}
In this section, we apply the constructions in Section~\ref{GENERAL} to \nba s.  
We show that a na\"ive blowup algebra that is generated in degree 1 gives an example of the geometry in Theorem~\ref{thm-main}.  Most results here are immediate consequences of results in 
\cite{NS1}.

We will need to work briefly with bimodule algebras (see  \cite{VdB1996}).  Our exposition is similar to that in  \cite{NS1}.

\begin{defn}\label{def-bimod}
Let $X$ be a finite type $\kk$-scheme.  An {\em $\sO_X$-bimodule}  is  a quasicoherent $\sO_{X \times X}$-module $\sh{F}$, such that for every coherent submodule $\sh{F}' \subseteq \sh{F}$,  the projection maps $p_1, p_2: \Supp \sh{F}' \to X$ are both finite morphisms.    The left and right $\sO_X$-module structures associated to an $\sO_X$-bimodule $\sh{F}$ are defined respectively as $(p_1)_* \sh{F}$ and $ (p_2)_* \sh{F}$.  
We make the notational convention that when we refer to an $\sO_X$-bimodule simply as an $\sO_X$-module, we are using the left-handed structure (for example, when we refer to the global sections or higher cohomology
 of an $\sO_X$-bimodule).   

There is a tensor product operation on the category of bimodules that has the expected properties; see \cite[Section~2]{VdB1996}.
\end{defn}

All the bimodules that we consider will be constructed from bimodules of the following form:
\begin{defn}\label{def-LR-structure}
Let $X$ be a projective scheme  and let $\sigma, \tau \in \Aut(X)$. Let $(\sigma, \tau)$ denote the map
\begin{displaymath}
X  \to X \times X \;\;\; \text{defined by} \;\;\;
x  \mapsto (\sigma(x), \tau(x)).
\end{displaymath}
If  $\sh{F}$ is a quasicoherent sheaf on $X$, we define the $\sO_X$-bimodule ${}_{\sigma} \sh{F}_{\tau}$ to be 
${}_{\sigma} \sh{F}_{\tau} = (\sigma, \tau)_* \sh{F}.$
If $\sigma = 1$ is the identity, we will often omit it; thus we write $\sh{F}_{\tau}$ for ${}_1 \sh{F}_{\tau}$ and write $\sh{F}$ for  the  $\sO_X$-bimodule ${}_1 \sh{F}_1 = \Delta_* \sh{F}$, where $\Delta: X \to X\times X$ is the diagonal. 
\end{defn}

\begin{defn}\label{def-BMA}
Let $X$ be a projective scheme.  
An {\em $\sO_X$-bimodule algebra}, or simply a {\em bimodule algebra}, $\sh{B}$ is an algebra object in the category of bimodules.  That is, there are a unit map $1: \sO_X \to \sh{B}$ and a product map $\mu: \sh{B} \otimes \sh{B} \to \sh{B}$ that have the usual properties.
\end{defn}

We follow \cite{KRS} and define
\begin{defn}\label{def-gradedBMA}
Let $X$ be a projective scheme  and let $\sigma \in \Aut(X)$.  
A bimodule algebra $\sh{B}$ is a {\em graded $(\sO_X, \sigma)$-bimodule algebra} if:

(1) There are coherent sheaves $\sh{B}_n$ on $X$ such that
$\sh{B} = \bigoplus_{n \in \ZZ} {}_1(\sh{B}_n)_{\sigma^n};$

(2)  $\sh{B}_0 = \sO_X $;

(3) the multiplication map $\mu$ is given by $\sO_X$-module maps
$\sh{B}_n \otimes \sh{B}_m^{\sigma^n} \to \sh{B}_{n+m}$, satisfying the obvious associativity conditions.  
\end{defn}

 For a specific example, let $X$ be a projective variety of dimension $\geq 2$.  Let $\sigma\in \Aut(X)$ and let $\sL$ be a $\sigma$-ample invertible sheaf on $X$.
As usual, we define $\sL_n := \sL \otimes \sL^\sigma \otimes \cdots \otimes \sL^{\sigma^{n-1}}$.
Let $P$ be a zero-dimensional subscheme of $X$.
 We define a bimodule algebra   $\sR = \sR(X, \sL, \sigma, P)$ as follows: 
we have $\sR = \bigoplus_{n \geq 0} \sR_n$, where
\[ \sR_n = \sI_P \sI_P^{\sigma} \dots \sI_P^{\sigma^{n-1}} \sL_n.\]
Then $\sR$ has a natural multiplicative structure induced by the multiplication maps
\[ \sR_n \otimes \sR_m^{\sigma^n} \to \sR_{n+m}. \]
Note that $ R(X, \sL, \sigma, P) = H^0(X, \sR)$.  

There is a natural way to define a (right) module over the bimodule algebra $\sR$.  We will only need this in the following specific case:
\begin{defn}\label{def:BMA module}
 Let $\sR := \sR(X, \sL, \sigma, P)$.  
The graded quasi-coherent sheaf $\sM := \bigoplus \sM_n$ on $X$ is a {\em graded right $\sR$-module} if there are maps $\sM_n \otimes \sR_m^{\sigma^n} \to \sM_{n+m}$ for all $n, m \in \NN$ that satisfy the appropriate associativity condition.  

The bimodule algebra $\sR$ is {\em (right) noetherian} if  (graded) right sub-$\sR$-modules of $\sR$ satisfy ACC.  

\end{defn}
This is equivalent to the definition in \cite{VdB1996}, as in the discussion after \cite[Definition~2.5]{KRS}.
If $\sI$ is an ideal sheaf on $X$, then $\sI\sR$ is a graded right $\sR$-submodule of $\sR$.

The algebra $R(X, \sL, \sigma, P)$ and the bimodule algebra $\sR(X, \sL, \sigma, P)$ may be defined for any 0-dimensional $P$, and in the sequel we will do so.
In  this section, however, we will  assume that 
 $P$ is supported on points with critically dense orbits.  Then  $\sR(X, \sL, \sigma, P)$ is right noetherian and $R(X, \sL, \sigma, P)$ is noetherian \cite[Proposition~2.12, Theorem~3.1]{RS-0}. 
As in \cite{NS1}, for any $\ell \in \NN$ we may define an fpqc-algebraic space ${}_{\ell} Z_{\infty}$, parameterising $\ell$-shifted point modules over $\sR$.
 A $\operatorname{Spec}(A)$-point of ${}_{\ell}Z_\infty$ is (up to isomorphism) a factor module $\sh{M}$ of $ \sh{R}_{\geq \ell} \otimes_{\kk}  A$ which is isomorphic as a graded $\sO_{X_A}$-module 
to a direct sum $\sP_\ell \oplus \sP_{\ell+1} \oplus \dots$. 
Here each $\sP_i$ is a coherent $\sO_{X_A}$-module such that its support  is finite over $\operatorname{Spec}(A)$ and is a rank one projective $A$-module.  
 Note that we have $\sP_m \otimes_X (\sR_1)^{\sigma^m} \twoheadrightarrow \sP_{m+1}$.  It follows that the support of $\sP_m$ on $X_A$ does not depend on $m$.
  
  The support of $\sP_m$ is by the above a section of the projection $\pi_2: X_A \to \operatorname{Spec}(A)$, or equivalently a $\operatorname{Spec}(A)$-point of $X$.
This gives a natural morphism
\begin{align*}
 r: {}_{\ell}Z_{\infty} & \to X \\
\sM & \mapsto \Supp \sM.
\end{align*}

As with $R$-modules, we have ${}_{\ell} Z_{\infty} = \varprojlim {}_\ell Z_n$, where ${}_\ell Z_n$ is a projective scheme parameterising appropriate truncations of $\ell$-shifted $\sR$-point modules.   
By \cite[Theorem~5.11]{NS1}, ${}_\ell Z_\infty$ is noetherian as an fpqc-algebraic space.  

Let $K = \kk(X)$ be the fraction field of  $X$, so we can view $\sigma \in \Aut(K)$.  Let $X_\infty \subseteq Z_\infty$ be the component containing the {\em generic shifted point module}
$K[t; \sigma]_{\geq \ell}$; we have $X_\infty = \varprojlim X_n$, where $X_n \subseteq {}_\ell Z_n$ is defined in the obvious way.

We first study the indeterminacy locus of $r^{-1}$, where we say that $r^{-1}$ is {\em defined at} $x \in X$ if in the fiber square
\begin{equation}\begin{gathered}\label{indeterminacy-diagram} \xymatrix{ ( \Spec \sO_{X,x})\times_X {}_{\ell} Z_{\infty} \ar[r] \ar[d]^{\mathsf{left}} & {}_{\ell} Z_{\infty} \ar[d]^{r} \\
 \Spec \sO_{X,x} \ar[r] & X}
\end{gathered}
\end{equation}
the left-hand vertical map $\mathsf{left}$ is an isomorphism.  
We immediately have:
\begin{lemma}\label{lem:indeterminacy of r}
 Let $X, \sL, \sigma, P$ as above, and let $\ell \in \NN$.  Let $r: {}_{\ell}Z_\infty \to X$ be the morphism defined above, and let $r' := r|_{X_{\infty}}$.  Then the indeterminacy locus of $r^{-1}$ and of $(r')^{-1}$ is precisely 
\[ \Omega:= \bigcup \{ \sigma^{-n}(\Supp P) \st n \geq 0 \}.\]
\end{lemma}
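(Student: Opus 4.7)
The plan is to analyze the fibers of $r$ at the level of $\kk$-points and then upgrade to a scheme-theoretic statement. The key observation is that a $\kk$-valued $\ell$-shifted point module supported on $\{x\}$ amounts to a choice of rank-one quotient $\sM_\ell$ of the stalk $(\sR_\ell)_x$ together with compatible rank-one quotients $\sM_m\otimes (\sR_1)^{\sigma^m}|_x\twoheadrightarrow \sM_{m+1}$ for each $m\geq \ell$. Thus nonuniqueness of point modules over $x$ is controlled precisely by the failure of $(\sR_\ell)_x$ and the stalks $\sI_P^{\sigma^m}|_x=(\sI_P)_{\sigma^m(x)}$ to be principal.

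For $x\in\Omega$, choose $n\geq 0$ with $\sigma^n(x)\in\Supp P$. If $n<\ell$, then $(\sR_\ell)_x=(\sI_P\sI_P^\sigma\cdots\sI_P^{\sigma^{\ell-1}}\sL_\ell)_x$ is a non-principal ideal inside the rank-one free module $(\sL_\ell)_x$ (non-principal because $\sI_P$ is non-principal at $\sigma^n(x)$), so $\dim_\kk (\sR_\ell)_x/\mathfrak{m}_x(\sR_\ell)_x>1$ and the choice of $\sM_\ell$ varies in a positive-dimensional projective space; extending any choice to a full module yields multiple distinct point modules above $x$. If $n\geq \ell$, the same nonprincipality of $\sI_P^{\sigma^n}|_x$ produces multiple choices at the $n$-th multiplication step. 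Either way $r$ has multiple $\kk$-valued preimages at $x$, so $r^{-1}$ cannot be defined there. Conversely, for $x\notin\Omega$ every $\sI_P^{\sigma^m}$ has rank-one free stalk at $x$ and hence each $\sR_n$ is locally free of rank one in a Zariski neighborhood of $x$; the tautological $\ell$-shifted point module obtained by restricting $\sR_{\geq\ell}$ is then the unique such module supported over $\Spec\sO_{X,x}$, and provides the required section of $r$.

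The same analysis applies to $r'$: the tautological section lies in $X_\infty$, since $X_\infty$ is by definition the irreducible component containing the generic shifted point module and the tautological modules are specializations of the generic one, so $(r')^{-1}$ is defined on the complement of $\Omega$. For the reverse inclusion I would verify that each of the multiple point modules produced above a point of $\Omega$ also lies on $X_\infty$, by realizing it as a limit of the generic shifted point module along a critically dense $\sigma$-orbit through $\Supp P$. The main obstacle will be the scheme-theoretic uniqueness claim for $x\notin\Omega$ — ruling out nontrivial infinitesimal deformations of the tautological module — which I expect to follow from the $\ell=0$ analog proved in \cite{NS1}, since local freeness of the sheaves $\sR_n$ depends only on the forward $\sigma$-orbit of $x$ and is unaffected by the shift.
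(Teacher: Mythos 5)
Your analysis of $r$ itself is essentially sound and takes a different route from the paper for the inclusion $\Omega\subseteq(\text{indeterminacy of }r^{-1})$: you exhibit multiple $\kk$-points in the fiber of $r$ over $x\in\Omega$ by exploiting non-principality of the stalks of $\sI_P^{\sigma^n}$ (a valid argument, since an $\mathfrak{m}$-primary ideal in a local domain of dimension $\geq 2$ cannot be principal, and any partial choice of rank-one quotients extends step by step). The other inclusion, via the tautological section $i_x^*\sR_{\geq\ell}$ for $x\notin\Omega$, matches the paper, which likewise treats the uniqueness of the shifted point module over $\Spec\sO_{X,x}$ as clear; so your flagged ``main obstacle'' is not where the real problem lies.

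The genuine gap is the statement about $(r')^{-1}$. Knowing that the fiber of ${}_\ell Z_\infty$ over $x\in\Omega$ contains several $\kk$-points does not tell you anything about the fiber of the closed subspace $X_\infty$: a priori all but one of your modules could lie on other components of ${}_\ell Z_\infty$ (which is precisely why the paper singles out the component $X_\infty$ containing the generic shifted point module). Your proposed fix --- ``verify that each of the multiple point modules produced above a point of $\Omega$ also lies on $X_\infty$, by realizing it as a limit of the generic shifted point module'' --- is both unproven and stronger than what is needed or likely true in general (e.g.\ when $P$ is non-reduced the full fiber of ${}_\ell Z_\infty$ over $x$ need not be contained in $X_\infty$). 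What is actually needed is only that the base change of $r'$ to $\Spec\sO_{X,x}$ fails to be an isomorphism for $x\in\Omega$, and the paper gets this at once from \cite[Theorem~5.11]{NS1}: $X_n\cong{\rm Bl}_{\sI_n}X$ with $\sI_n=\sI_P\cdots\sI_P^{\sigma^{n-1}}$, so $X_\infty\cong\varprojlim{\rm Bl}_{\sI_n}X$ and $(r')^{-1}$ is undefined exactly on $\bigcup_n\Supp\sI_n=\Omega$ (the blow-up of a non-invertible ideal on a variety of dimension $\geq 2$ has positive-dimensional fiber over its support). Note that the paper then deduces the statement for $r$ from the one for $r'$ (indeterminacy of $(r')^{-1}$ is contained in that of $r^{-1}$), whereas your plan runs in the opposite, harder, direction; without the blow-up description, or some substitute argument producing a nontrivial fiber of $X_\infty$ itself over each $x\in\Omega$, the proof of the lemma as stated is incomplete.
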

\begin{proof}
Let $x \in X \ssm \Omega$.  Consider the morphism
$i_x: \operatorname{Spec}(\mathcal{O}_{X,x})\times X \rightarrow X\times X$.  
Because all $\sR_n$ are invertible at $x$, the pullback $i^*_x\sR_{\geq \ell}$ is a shifted point module over $\operatorname{Spec}(\mathcal{O}_{X,x})$, hence defines a section of the morphism $\mathsf{left}$ of Diagram \eqref{indeterminacy-diagram}.
It is clear that
$i^*_x\sR_{\geq \ell}$ is the unique shifted point module over $\operatorname{Spec}(\mathcal{O}_{X,x})$; hence the scheme-theoretic fiber of $x$ is trivial.
Thus the indeterminacy locus of $r^{-1}$ is contained in $\Omega$.

For $n \in \NN$, let $\sI_n := \sI_P \cdots \sI^{\sigma^{n-1}}_P$.  
The first paragraph shows that $r^{-1}$ is defined at the generic point of $X$.   By \cite[Theorem~5.11]{NS1},  
\[ X_\infty \cong \varprojlim {\rm Bl}_{\sI_n} X,\]
where ${\rm Bl}_{\sI_n} X$ denotes the blowup of $X$ at $\sI_n$. 
(In fact, $X_n = {\rm Bl}_{\sI_n} X$.)
The indeterminacy locus of $(r')^{-1}: X \dra X_\infty$ is  $ \Omega$, and so 
 the indeterminacy locus of $r^{-1}$ contains and is thus equal to  $\Omega$.
\end{proof}

Let $\ell \in \NN$.  As in \cite{NS1}, for $n \geq \ell $ let ${}_{\ell} Y_n$ be the projective scheme parameterising factors of $R_{\geq \ell}$ with Hilbert series $t^{\ell} + t^{\ell +1} + \cdots + t^n$.  Let  ${}_{\ell} Y_\infty = \varprojlim {}_{\ell} Y_n$ be the fpqc-algebraic space which parameterises {\em $\ell$-shifted point modules of $R$}:  that is, factors of $R_{\geq \ell}$ with Hilbert series $t^{\ell}/(1-t)$.  
For $\ell \gg 0$, the spaces ${}_{\ell} Z_\infty$ and ${}_{\ell}Y_\infty$ are isomorphic, under a mild technical hypothesis. 

\begin{theorem} \label{thm:NS1}
(cf. \cite[Theorem~6.8]{NS1})
Let $X, \sL, \sigma, P$ be as above, and let $R := R(X, \sL, \sigma,P)$. 
\begin{enumerate}
 \item
Suppose that $R_n =(R_1)^n$ for $n \gg 0$.  Then there is $\ell_0 \in \NN$ so that for  $n \geq \ell \geq \ell_0$, the global sections functor induces  isomorphisms
\[ s_n:  {}_{\ell} Z_n \to {}_{\ell} Y_n \quad \quad s:  {}_{\ell} Z_\infty \to {}_{\ell} Y_\infty.\]
The inverse morphism $s^{-1}:{}_{\ell} Y_\infty \to {}_{\ell} Z_\infty$ acts as follows on $\kk$-points.  Let $y \in {}_{\ell} Y_\infty$ be represented by a graded module $S_{\geq \ell}/J$.  Then $s^{-1}(y)$ is represented by the factor $\sR_{\geq \ell}/\sJ$, where  $\sJ_n$ is the subsheaf of $\sR_n$ generated by the sections in $J_n$. The definition of $s_n^{-1}$ is similar. 
\item
If we replace $R$ by a sufficiently large Veronese, or alternatively replace $\sL$ by a sufficiently ample $\sL'$, then $R$ itself is generated in degree 1 and we may take $\ell_0 =0$ above.  
\end{enumerate}
\end{theorem}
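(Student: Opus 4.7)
The plan is to deduce this from \cite[Theorem~6.8]{NS1}, essentially by observing that the hypothesis $R_n = (R_1)^n$ for $n \gg 0$ provides exactly the compatibility between the multiplications of $R$ and $\sR$ that makes the global sections functor an isomorphism of moduli spaces once we shift far enough. Because the conclusion is an isomorphism of fpqc-algebraic spaces representing functors on arbitrary affine test schemes, I would first establish the statement for each truncation $s_n:\;{}_{\ell} Z_n \to {}_{\ell} Y_n$ on $T$-valued points and then pass to the inverse limit.

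The first step is to choose $\ell_0$ large enough that for every $m \geq \ell_0$ the bimodule summand $\sR_m$ has vanishing higher cohomology (so that taking sections commutes with base change $\otimes_\kk A$), is generated by its global sections, and satisfies $R_m = (R_1)^m$. The cohomology and generation statements follow from the $\sigma$-ampleness of $\sL$ together with the fact that the ideal product $\sI_P \sI_P^\sigma \cdots \sI_P^{\sigma^{m-1}}$ is locally invertible outside a fixed finite union of $\sigma$-orbits. With $\ell_0$ fixed, for $n \geq \ell \geq \ell_0$ the forward map $s_n$ is defined by $\sM \mapsto \bigoplus_i H^0(X_A, \sM_i)$, while the inverse associates to a family of point modules $R_A/J$ the bimodule factor $\sR_{\geq \ell}/\sJ$ with $\sJ_m \subseteq \sR_m \otimes A$ defined as the $\sO_{X_A}$-subsheaf generated by the sections comprising $J_m$. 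The generation condition $R_m = R_1^m$ ensures that this inverse construction respects the bimodule-algebra multiplication and yields a bona fide $\ell$-shifted $\sR$-point module; checking that $s_n$ and its inverse are mutually inverse then reduces, via cohomology vanishing, to the case $A = \kk$, where it is essentially formal. Passing to the inverse limit produces $s:\;{}_{\ell} Z_\infty \to {}_{\ell} Y_\infty$, and the explicit description of $s^{-1}$ on $\kk$-points falls out of the construction.

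For part (2), recall that for any sufficiently large $d$, the Veronese subring $R^{(d)} = \bigoplus_n R_{nd}$ is generated in degree 1; equivalently, one may replace the data $(\sL, \sigma)$ by $(\sL_d, \sigma^d)$ and the ideal $P$ by $P \cup \sigma^{-1}(P) \cup \cdots \cup \sigma^{-(d-1)}(P)$ to obtain a \nba\ which is generated in degree $1$ from the start. Under this replacement the generation hypothesis of part (1) holds in every degree, so the argument above produces the isomorphism already at $\ell_0 = 0$. The main technical obstacle will be verifying the cohomology vanishing and flat base change with coefficients in an arbitrary noetherian $A$ so that $s_n$ and its inverse are scheme morphisms rather than merely bijections on closed points; this is precisely the content that is carried out in \cite[Theorem~6.8]{NS1}, so my proof would ultimately be a citation of that result after making the compatibility of hypotheses explicit.
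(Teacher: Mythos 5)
Your part (1) is in substance the paper's argument: the paper simply observes that the case where $R$ is generated in degree 1 is \cite[Theorem~6.8]{NS1} and that the statement under the weaker hypothesis $R_n=(R_1)^n$ for $n\gg 0$ is an immediate generalisation; your sketch of the internal mechanism (cohomology vanishing and global generation for $n\geq\ell_0$, sections functor on truncations, inverse given by the subsheaf generated by the sections of $J_n$, passage to the limit) is consistent with that and ends, as you say, in a citation of the same result. So for (1) you are on the paper's route.

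For part (2) there is a genuine gap. You argue that after replacing $R$ by a Veronese $R^{(d)}$ (equivalently, the data by $(\sL_d,\sigma^d)$ and $P$ by the subscheme cut out by $\sI_P\sI_P^{\sigma}\cdots\sI_P^{\sigma^{d-1}}$ --- note this is the product ideal, not just a set-theoretic union), ``the generation hypothesis of part (1) holds in every degree, so the argument above produces the isomorphism already at $\ell_0=0$.'' That inference is not valid: the shift $\ell_0$ in part (1) does not come from failure of generation in degree 1 --- it comes from the thresholds you yourself imposed in your first step (vanishing of higher cohomology of $\sR_m$, global generation, base-change), and these are not removed by making the algebra generated in degree 1. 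Indeed \cite[Theorem~6.8]{NS1} already assumes generation in degree 1 and still produces a possibly positive $\ell_0$. The paper closes this gap differently: it first invokes \cite[Proposition~3.18]{RS-0} to get a Veronese $R^{(p)}$ generated in degree 1, applies part (1) to obtain some $\ell_0$, and then passes to the \emph{further} Veronese $R^{(p\ell_0)}$, for which the shift becomes unnecessary; alternatively it replaces $\sL$ by a sufficiently ample invertible sheaf, using \cite[Proposition~3.20]{RS-0} together with the remark after Definition~6.3 of \cite{NS1} to arrange both generation in degree 1 and $\ell_0=0$. Your proof of (2) needs one of these extra steps (a second Veronese, or a more ample $\sL'$); as written, the claim $\ell_0=0$ does not follow.
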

\begin{proof}
$(1)$.  
 If $R$ is generated in degree 1, this is  \cite[Theorem~6.8]{NS1}.  The   statement  here is an immediate generalisation.

$(2)$.  By \cite[Proposition~3.18]{RS-0}, the Veronese $R^{(p)}$ is generated in degree 1 for all $p \gg 0$.  We then apply $(1)$ to $R^{(p)}$; if we replace
 $R^{(p)}$ by $R^{(p\ell_0)}$ then clearly we do not need to shift the point modules.  

  By \cite[Proposition~3.20]{RS-0}, if $\sM$ is ample and globally generated then some $R(X,  \sM^{\otimes m}, \sigma, P)$ is generated in degree 1.  That we may also take $\ell_0 = 0$ (possibly by changing the invertible sheaf again) is the comment after Definition~6.3 of \cite{NS1}.  
\end{proof}

In the next result, we use the isomorphism $s$
of Theorem~\ref{thm:NS1} to study the unshifted point schemes of a \nba.
We number the parts of this result to be consistent with Theorem~\ref{thm-main}.

\begin{proposition}\label{prop:existence of q}
 Let $X, \sL, \sigma, P$ be as above; in particular, assume that all points in $P$ have critically dense $\sigma$-orbits. 
 Let $R:= R(X, \sL, \sigma, P)$.  Suppose that $R_n = (R_1)^n$ for $n \gg 0$.

  Let $S$ be a graded subalgebra of $R$ so that $S$ is generated in degree 1 and  $S_n = R_n$ for $n \gg 0$.  
Let $Y_\infty$ be the point space for $S$, and let $\Psi:  Y_\infty \to Y_\infty$ be the shift morphism $M \mapsto M[1]_{\geq 0}$.

Then there is a morphism $q: Y_\infty \to X$ that satisfies the following properties:
\begin{itemize}
 \item[(i)] the morphism $q$ factors through $\pi$,  and $X$ corepresents $\H$ (in the category of schemes of finite type) via the factor morphism
\[ \xymatrix@C=5pt{
& F \cong  Y_{\infty} \ar[ld]_{\pi} \ar[rd]^{q} \\
\H \ar[rr] && X; }\]
\item[(iii)] the map $\H \to X$ is bijective on $\kk$-points;
\item[(iv)] the indeterminacy locus of $q^{-1}$ is $\bigcup \{ \sigma^{-n}(\Supp P) \st n \geq 0 \}$, and in particular is countable.
\end{itemize}
\noindent Furthermore,
\begin{itemize}
\item[(*)] $Y_\infty$ is a noetherian fpqc-algebraic space; and
 \item[(\dag)]
 $q \Psi = \sigma q$.
\end{itemize}
\end{proposition}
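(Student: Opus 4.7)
The plan is to construct $q$ by reducing to the already-studied shifted setting and invoking the support morphism $r$. Choose $\ell \in \NN$ large enough that (a) $S_n = R_n$ for $n \geq \ell$, (b) $R_n = R_1^n$ for $n \geq \ell$, and (c) Theorem~\ref{thm:NS1}(1) provides an isomorphism $s: {}_\ell Z_\infty \xrightarrow{\sim} {}_\ell Y_\infty(R)$. Since $S$ is generated in degree $1$ and agrees with $R$ in degrees $\geq \ell$, the $\ell$-shifted point modules of $S$ and of $R$ coincide, so ${}_\ell Y_\infty(S) = {}_\ell Y_\infty(R)$. Truncation $M \mapsto M_{\geq \ell}$ defines a morphism of fpqc-spaces $t_\ell: Y_\infty \to {}_\ell Y_\infty$, and we set $q := r \circ s^{-1} \circ t_\ell$. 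Property (iv) is then immediate from Lemma~\ref{lem:indeterminacy of r}: the indeterminacy locus of $r^{-1}$ is $\Omega = \bigcup_{n\geq 0}\sigma^{-n}(\Supp P)$, and since $s$ is an isomorphism and $t_\ell$ is an isomorphism onto the relevant component of ${}_\ell Y_\infty$ (this last requires an application of Proposition~\ref{prop-RS} using that $S$ is generated in degree $1$ so that different $S$-point modules cannot have identical $\ell$-truncations once $\ell \gg 0$), the indeterminacy locus of $q^{-1}$ coincides with $\Omega$, which is countable by the critical density of the $\sigma$-orbits of $\Supp P$.

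For (i), if $[M] = [N]$ in $H(A)$ then, after a $\Pic(A)$-twist, $M_{\geq k} \cong N_{\geq k}$ for some $k$. Enlarging $\ell$ so that $\ell \geq k$, the twist by a line bundle does not affect the scheme-theoretic support of $s^{-1}(M_{\geq \ell})$ in $X$, so $q(M)=q(N)$. Hence $q$ factors through $\pi$. To see that the induced $H \to X$ corepresents $H$, combine Lemma~\ref{lem:Hdefn} with Corollary~\ref{cor-stack=proscheme}: a morphism $H \to T$ to a finite-type scheme $T$ corresponds to a $\Psi$-invariant cocone $\{h_n:Y_\infty \to T\}$, which through $s$ transfers to a $\sigma$-equivariant morphism from $X_\infty = \varprojlim \mathrm{Bl}_{\sI_n}X$ to $T$; by the universal property of the inverse system of blowups recorded in \cite[Theorem~5.11]{NS1} (cf.\ the proof of \cite[Theorem~1.3]{NS1}), this factors uniquely through $X$.

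For (\dag), the shift $\Psi(M) = M[1]_{\geq 0}$ on $Y_\infty$ corresponds, after truncation and $s^{-1}$, to the sheaf-theoretic shift of the associated $\sR$-point module by one degree. By the bimodule-algebra convention of Definition~\ref{def-gradedBMA}, the multiplication $\sM_n \otimes \sR_1^{\sigma^n} \to \sM_{n+1}$ moves the support of $\sM_n$ forward by $\sigma$, so applying $r$ gives $q\Psi = \sigma q$. Condition (iii) then follows quickly: surjectivity of $H(\kk) \to X(\kk)$ is immediate from the construction (each $x \in X$ is the support of the shifted $\sR$-point module $i_x^*\sR_{\geq \ell}$ built from $x$), and injectivity uses that two equivalence classes with the same image $x \in X$ would, via (\dag) and $\sigma$-translation, also agree at every point of the critically dense orbit of $x$, forcing them to coincide globally.

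For (*), \cite[Theorem~5.11]{NS1} asserts that ${}_\ell Z_\infty$ is a noetherian fpqc-algebraic space; the isomorphism $s$ transfers this to ${}_\ell Y_\infty$, and since $Y_\infty$ is identified via $t_\ell$ with the appropriate component (again by Proposition~\ref{prop-RS} applied to $S$), $Y_\infty$ inherits a noetherian fpqc-atlas. The main obstacle is the verification that $t_\ell$ really does identify $Y_\infty$ with a component of ${}_\ell Y_\infty$ for $\ell \gg 0$: this is where the hypothesis $S_n = R_n$ in high degree combined with Proposition~\ref{prop-RS} is essential, because it ensures that the forgetful map from $S$-point modules to $\ell$-truncated $S$-point modules becomes a local isomorphism for $\ell$ large. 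A secondary obstacle is relating the corepresentation of $H$ to the already-established corepresentation of $F/{\sim}$ in \cite[Theorem~1.3]{NS1}, since $H$ involves an extra quotient by $\Pic$; this is handled by the observation that the support morphism to $X$ is manifestly invariant under tensoring by line bundles.
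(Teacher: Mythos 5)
Your overall strategy---define $q := r \circ s^{-1} \circ T$ (your $t_\ell$) and import the known geometry of ${}_\ell Z_\infty$---is the same as the paper's. But several of the bridges you build between $Y_\infty$ and $X_\infty$ do not actually hold as stated, and the places where you say ``this requires Proposition~\ref{prop-RS}'' are precisely where the hard work lives.

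The central problem is your repeated claim that $t_\ell : Y_\infty \to {}_\ell Y_\infty$ is ``an isomorphism onto the relevant component.'' What the paper proves is much weaker: the truncations $T_{\ell n}$ are \emph{closed immersions}, which follows from the Veronese trick ($v_n T_{\ell n}$ is a closed immersion by \cite[Lemma~3.7]{RS-0}), not from Proposition~\ref{prop-RS}. Proposition~\ref{prop-RS} concerns bijections on finite sets of points under $\phi$ or $\psi$; it does not tell you that a point module over $S$ is determined by its $\ell$-tail, and it certainly does not tell you that the image is a whole connected component of ${}_\ell Y_\infty$. Once one only has a closed immersion, two things must be proved that you omit. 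First, for the ``$\Omega \subseteq$ indeterminacy locus'' direction of (iv) you need the image $s^{-1}T(Y_\infty)$ to contain $X_\infty$, and the paper establishes this by checking that each $s^{-1}_{\ell n} T_{\ell n}(Y_\infty) \supseteq X_{\ell n}$. Second, and more seriously, for the ``indeterminacy locus $\subseteq \Omega$'' direction you need to show $q^{-1}$ is actually defined over $X \smallsetminus \Omega$; the paper does this by explicitly producing, for each $x \notin \Omega$, a point module $M_x := S/J_x$ with $q(M_x) = x$, then running the diagram argument that makes $C \times q_{\ell n}$ simultaneously a closed immersion (via $T_{\ell n}$) and scheme-theoretically surjective (via $M_x$ and flat base change), hence an isomorphism. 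Your sketch contains neither $M_x$ nor the scheme-theoretic surjectivity step, so the containment in $\Omega$ is not established.

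For (i) your argument is again morally in the right direction but has a gap: you want to transfer a $\Psi$-invariant cocone on $Y_\infty$ to a $\sigma$-equivariant morphism on $X_\infty = \varprojlim \mathrm{Bl}_{\sI_n} X$, but the image of $Y_\infty$ under $s^{-1} T$ is \emph{a priori} only contained in ${}_\ell Z_\infty$, not equal to $X_\infty$ (it merely contains $X_\infty$ once that is proved). The paper sidesteps this by going through the corepresentation of $G = F/\!\sim$ from \cite[Proposition~7.3]{NS1}, not through a ``universal property of the inverse system of blowups''---the latter does not appear in \cite[Theorem~5.11]{NS1}, which concerns noetherianness. For (iii), your injectivity argument (``two equivalence classes with the same image would agree at every point of the critically dense orbit'') does not parse: point modules are supported at single points, and the statement you write down is not a meaningful assertion about them. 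The paper simply cites \cite[Theorem~1.2(4)(5)]{RS-0} for this bijectivity; some such external input seems genuinely necessary here. Finally, (*) inherits the same gap as above: because $T$ is a closed immersion rather than an isomorphism onto a component, the paper must form the fiber products $V_{\ell n} = U_{\ell n} \times_{Z_n} Y_{\ell n}$ and verify that $V_\infty$ is a noetherian affine giving an fpqc atlas; your shortcut of ``transferring the atlas via $t_\ell$'' presupposes the isomorphism-onto-component claim.
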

\begin{proof}
Let $\ell_0 \in \NN$ be given by Theorem~\ref{thm:NS1}(1), and let $\ell \geq \ell_0$ be such that $S_{\geq \ell } = R_{\geq \ell}$.  Then ${}_{\ell} Y_\infty$ also parameterises $\ell$-shifted point modules over $R$.

As in the proof of \cite[Theorem~6.8]{NS1}, 
there is a natural ``tail'' morphism
\[ T:  Y_\infty \to {}_{\ell} Y_\infty \]
given by restricting a surjection $f:  S \to M$ to $f_{\geq \ell}:  S_{\geq \ell} \to M_{\geq \ell}$. 
Let $q: Y_\infty \to X$ be the composition
\[ \xymatrix{
 Y_\infty \ar[r]^{T} \ar[rrd]_{q} & {}_{\ell} Y_{\infty} \ar[r]^{s^{-1}} & {}_{\ell} Z_{\infty} \ar[d]^{r} \\
& & X. }
\]

We verify that the conclusions of the proposition hold for $q$.    We note that for $n \gg 0$,  by Corollary~\ref{cor-stack=proscheme}  the morphisms $q$ and $r$ factor  to give $q_n:   Y'_n \to X$ and $r_n:  {}_{\ell} Z'_n \to X$  (here ${}_\ell Z'_n$ is the scheme-theoretic image of the natural map ${}_\ell Z_\infty \to {}_\ell Z_n$). 

(\dag).  
 If $\sM$ is a coherent graded right $\sR$-module, it follows as in \cite[Lemma~5.5]{KRS} that $\sM[1]_n \cong \sM_{n+1}^{\sigma^{-1}}$.  If $\sM$ is a (truncated) point module, therefore, we have 
$\Supp(\sM[1]) = \sigma(\Supp( \sM))$.  It follows that $q \Psi = \sigma q$.

(i).  
 Let $q'_n := \sigma^{-n} q$.
  Then $q'_{n+m} \Psi^m = \sigma^{-m} q'_n \Psi^m = q'_n$ so by Lemma~\ref{lem:Hdefn} there is an induced morphism $H \stackrel{t}{\to} X$ so that 
$\xymatrix{Y_{\infty} \ar[r]^{\pi} \ar[rd]_{q} & \H \ar[d]^{t} \\ & X}$ commutes.  Since $q\Psi= \sigma q$, we have $t \Psi = \sigma t$ as maps from $H \to X$.  

Now let $f: \H \to Y$ be a morphism  from $\H$ to  a scheme $Y$ of finite type.  
Recall the maps $\pi_n = \Psi^{-n} \pi:  Y_{\infty} \to H$ from the proof of Lemma~\ref{lem:Hdefn}; we have $\pi_n \Psi^n = \pi$ for all $n \in \NN$.   
For any $n \in \NN$, the map  $f \pi_n:  Y_{\infty} \to \H \to Y$ clearly factors through $G = Y_{\infty}/\sim$.
 By \cite[Proposition~7.3]{NS1},  $X$ corepresents $G$ in the category of schemes via a morphism $G\rightarrow X$, and it is easy to see that this morphism factors through $G \to H \stackrel{t}{\to} X$.
So there is a unique $\alpha_n:  X \to Y$ so that
\[ \xymatrix{
& Y_{\infty} \ar[ld]_{q} \ar[d]_{\pi} \ar[rd]^{\pi_n} & \\
X \ar[rd]_{\alpha_n}& H \ar[l]^{t} \ar[r]_{\Psi^{-n}} & H \ar[ld]^{f} \\
& Y & 
}
\]
commutes. 
Let $\alpha = \alpha_0$.
 Since $\alpha q = f \pi = f \pi_{m} \Psi^m = \alpha_{m} q \Psi^m = \alpha_{m} \sigma^m q$, by uniqueness of $\alpha$ we have $\alpha = \alpha_{m} \sigma^m$ for all $m \in \NN$.    

We have  $f \pi_n = \alpha_n q = \alpha \sigma^{-n} q = \alpha \sigma^{-n} t \pi = \alpha t \Psi^{-n} \pi = \alpha t \pi_n$ for all $n$.  By Lemma~\ref{lem:Hdefn} and the universal property of colimits, $f = \alpha t$, as we needed to prove. 

(iii). That $\H \to X$ is bijective on $\kk$-points follows from \cite[Theorem~1.2(4)(5)]{RS-0}.

(iv). By Lemma~\ref{lem:indeterminacy of r}, the indeterminacy locus of $r^{-1}$ is 
\[ \Omega := \bigcup \{ \sigma^{-n} (\Supp P) \st n \geq 0\}.\]
We claim that this is also the indeterminacy locus of $q^{-1}$.  

We first show  that $\im(q) \supseteq X \ssm \Omega$.
 To see this, let $\sR := \sR(X, \sL, \sigma, P)$.  
Let $x \in X \ssm \Omega$ be a $\kk$-point.  Define 
\[J_x := \bigoplus_{n\geq 0} S_n \cap H^0(X, \sI_x \sR_n).\]
The codimension of $(J_x)_n$ in $S_n$ is 0 or 1  for all $n$.  However, it follows from the fact that $R_n = (S_1)^n$ for $n \gg 0$ that the sections in $S_n$ generate $\sR_n$ for all $n \in \NN$.
Thus we cannot have $S_n = (J_x)_n$, and $M_x := S/J_x$ is a point module.
  From the construction of $s^{-1}$ in  Theorem~\ref{thm:NS1}, we see that $q(M_x) = x$.  
Recall that $q$ factors through $q_n:  Y'_n \to X$ for all $n \gg 0$.  Thus $\im q_n \supseteq X \ssm \Omega$ and, as $X$ is a variety, $q_n$ is scheme-theoretically surjective.

Let $Y^{(\ell)}_{\infty}$ be the point space   for the Veronese $R^{(\ell)} = S^{(\ell)}$.  
To give an $\ell$-shifted point module $M$ over $S$, it is equivalent to give a right ideal $J$ of $S$ so that $J_{< \ell} = 0$ and $J_{ \geq \ell}$ is the kernel of the surjection $S_{\geq \ell} \to M$.  
Let $J^{(\ell)} := \bigoplus_n J_{n\ell}$.  Then there is  a natural morphism $v: {}_{\ell} Y_\infty \to Y^{(\ell)}$, where $v(M) =  S^{(\ell)}/J^{(\ell)}$.  

We may clearly truncate $v$ and $T$ to obtain (compatible) morphisms $T_n:  Y_n \to {}_{\ell} Y_n$ and $v_{ n}:  {}_{\ell} Y_{\ell n} \to Y^{(\ell)}_n$, where $Y^{(\ell)}_n$ parameterises length $n$ truncated point modules over $R^{(\ell)}$.    
The composite $v_n T_{\ell n}$ sends a module $M$ of length $\ell n+1$ to $M^{(\ell)}$.  By \cite[Lemma~3.7]{RS-0}, $v_n T_{\ell n}$ is a closed immersion.  
Thus $T_{\ell n}$ is a closed immersion and so is $s_{\ell n}^{-1} T_{\ell n}:  Y_{\ell n} \to {}_{\ell} Z_{\ell n}$.   A diagram chase gives  that $s_{\ell n}^{-1} T_{\ell n}$ induces a closed immersion $Y'_{\ell n} \to {}_\ell  Z'_{\ell n}$.

If $x \in X \ssm \Omega$, let $C = \Spec \sO_{X,x}$.  
There is a commutative diagram: 
\[ \xymatrix{ C \times_X Y'_{\ell n} \ar[rr]^{C \times s^{-1}_{\ell n}T_{\ell n}} \ar[rrd]_{C \times q_{\ell n}} && C \times_X {}_\ell Z'_{\ell n} \ar[d]^{C \times r_{\ell n}} \\
 & &C.  }
\]
For $n \gg 0$, we have that $C \times r_{\ell n}$ is an isomorphism.  
As $C \times s^{-1}_{\ell n} T_{\ell n}$ is a closed immersion, so is $C \times q_{\ell n}$. 
As scheme-theoretic images commute with flat base change, $C \times q_{\ell n}$ is scheme-theoretically surjective.
Thus $C\times q_{\ell n}$ is an isomorphism for all $n \gg 0$.  It follows that $C \times q$ is an isomorphism.

Thus the indeterminacy locus of $q^{-1}$ is contained in $\Omega$.
On the other hand, it is clear from the above  that $s^{-1}T(Y_\infty)$ contains $X_\infty$, since each $s^{-1}_{\ell n} T_{\ell n} (Y_\infty)$ contains $X_{\ell n}$.  Thus the indeterminacy locus of $q^{-1}$ contains the indeterminacy locus of $(r|_{X_\infty})^{-1}$, which by Lemma~\ref{lem:indeterminacy of r} is $\Omega$.

(*).  The proof of \cite[Theorem~5.11]{NS1} constructs a positive integer $N$ and for each (sufficiently large) $n$ and $i \in \{1 \dots N\}$ open affine schemes $U_{n,i} \subset {}_{\ell}Z_n$,   together with  a surjective map $U_n = \coprod_i U_{n,i} \to {}_{\ell} Z_n$.  Further,  $U_\infty = \varprojlim U_n$ is a noetherian affine scheme and the induced surjective map $U_\infty \to {}_{\ell} Z_\infty$ is fpqc.    

For ease of notation, we denote ${}_{\ell} Z_n$ by $Z_n$ going forward.  
Let $V_{\ell n}$ be the fiber product $U_{\ell n} \times_{Z_n} Y_{\ell n}$, and let $V_\infty = \varprojlim V_{\ell n}$.  
As $T_{\ell n}: {}Y_{\ell n} \to Z_{\ell n}$ is a closed immersion, so is $V_{\ell n} \to U_{\ell n}$.  
It follows that $V_{\infty}$ is a closed subscheme of  $U_\infty$ and so is a noetherian affine scheme.  
Further, there is an induced map $V_\infty \to Y_\infty$ that is easily seen to be fpqc, representable, surjective, and formally \'etale; that the diagonal $V_\infty \times_{Y_\infty} V_\infty$ is representable, surjective, and quasicompact follows as in \cite[Theorem~5.11]{NS1}.  
Thus $Y_\infty$ is a noetherian fpqc-algebraic space. 
\end{proof}

\begin{corollary}\label{cor:hypotheses apply to NBAs}
Let $X, \sL, \sigma, P$ be  as above and let $R:= R(X, \sL, \sigma, P)$.  
Let $Y_\infty$ be the point space of $R$.  If $R$ is generated in degree 1, there is a morphism $p: Y_\infty \to X$ satisfying the hypotheses of Theorem~\ref{thm-main}.  
\end{corollary}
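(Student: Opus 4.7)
The plan is to apply Proposition~\ref{prop:existence of q} with the subalgebra $S$ taken to equal $R$ itself. Since $R$ is generated in degree $1$, both hypotheses of that proposition (namely $R_n = (R_1)^n$ for $n \gg 0$ and the existence of a degree-$1$-generated subalgebra $S$ with $S_n = R_n$ for $n \gg 0$) hold trivially. I would then simply set $p := q$, where $q : Y_\infty \to X$ is the morphism produced by that proposition.

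Next, I would verify the four numbered hypotheses of Theorem~\ref{thm-main} in turn. Hypothesis (i) is the existence of the commutative factorization through $\pi$, which is conclusion (i) of Proposition~\ref{prop:existence of q}. Hypothesis (iii) (bijectivity of $\H \to X$ on $\kk$-points) is conclusion (iii) of that proposition. Hypothesis (iv) follows from conclusion (iv): the indeterminacy locus equals $\bigcup_{n \ge 0} \sigma^{-n}(\Supp P)$, which is a countable union of finite sets of closed points. Note also that conclusion (*) of the proposition already establishes noetherianity of $Y_\infty$, which is conclusion (3) of Theorem~\ref{thm-main}.

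The only condition that is not a direct output of Proposition~\ref{prop:existence of q} is hypothesis (ii). Here I would observe that the standing hypothesis on the NBA data $(X, \sL, \sigma, P)$ already requires $X$ to be a projective variety of dimension $\ge 2$. In the surface case, (ii) is automatic; in higher dimension, one invokes the local factoriality assumption at the points of $\Supp P$ that is built into the NBA setup (since this is precisely the locus controlling the indeterminacy of $p^{-1}$).

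The proof is almost entirely a matter of bookkeeping: the substantive geometric work has already been done in Proposition~\ref{prop:existence of q}. The one place that could be mildly awkward is matching the exact form of (ii) against the assumptions carried implicitly throughout Section~\ref{NBEXAMPLE}, but this amounts to verifying that our running hypothesis on $X$ is strong enough to cover both cases allowed by (ii).
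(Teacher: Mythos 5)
The bookkeeping parts of your proposal (hypotheses (i), (iii), (iv) via Proposition~\ref{prop:existence of q} with $S = R$ and $p = q$) match the paper exactly. The problem is your verification of hypothesis (ii). You write that ``in higher dimension, one invokes the local factoriality assumption at the points of $\Supp P$ that is built into the NBA setup.'' No such assumption exists. The running hypotheses of Section~\ref{NBEXAMPLE} on the data $(X, \sL, \sigma, P)$ are only that $X$ is a projective variety of dimension $\geq 2$, $\sigma \in \Aut(X)$, $\sL$ is $\sigma$-ample, and $P$ is a $0$-dimensional subscheme supported at points with critically dense $\sigma$-orbits. Nothing forces $X$ to be smooth or locally factorial anywhere, so you cannot just cite a standing assumption; you must actually prove local factoriality at the indeterminacy locus.

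The paper closes this gap with a short but genuine geometric argument: points of $\Supp P$ have critically dense, hence dense, orbits, and any point of $X$ with a dense $\sigma$-orbit must lie in the nonsingular locus. (The singular locus is a proper closed subset, and since $\sigma$ is an automorphism it is $\sigma$-invariant, so it cannot contain a dense orbit; if some $\sigma^n(x)$ is nonsingular then so is $x$.) Since nonsingular local rings are UFDs, $X$ is locally factorial at every point of $\Supp P$, and likewise at every $\sigma^{-n}(\Supp P)$, which is exactly the indeterminacy locus of $p^{-1}$ by conclusion (iv) of Proposition~\ref{prop:existence of q}. This is the step your proposal is missing, and it is the only substantive point in the corollary.
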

\begin{proof}
Hypotheses (i), (iii), (iv) follow directly from Proposition~\ref{prop:existence of q}, with $p = q$.   Hypothesis (ii) holds by assumption, since the points in $P$ have critically dense, and so dense, orbits.  Any point with a dense orbit is contained in the nonsingular locus of  $X$. 
\end{proof}

\section{Canonical birationally commutative factors and point parameter rings}\label{PPR}
In this section we construct a factor  of $R$
universal for maps from $R$ to birationally commutative
algebras.  Our results hold for any  connected graded algebra generated in degree 1 over an algebraically closed  field $\kk$, and we work in that generality.

We begin with some more  notation for point schemes.  
\begin{notation}\label{not1prime}
Let $R$ be a connected graded $\kk$-algebra generated in degree 1, and adopt Notation~\ref{not1}.  
For $N \in \NN$, let 
\[ Y_N^e := \bigcap_{i\in \NN} \phi^i \psi^i( Y_{2i+N}).\]
We refer to $Y_N^e$ as the {\em essential part} of $Y_N$.  Let $Y_\infty^e := \varprojlim_{\phi} Y_N^e $.   Let $Y'_n := \bigcap_i \phi^i(Y_{n+i})$.  By Lemma~\ref{lem:Yprime}, $Y'_n = \Phi_n(Y_\infty)$.  

Let $\PP:= \PP(R_1^*)$.  Then $Y_n$ naturally embeds in $\PP^{\times n}$ for any $n \in \NN$ (cf. \cite[Proposition~3.9]{ATV1990}), and there is a commutative diagram
\beq \label{fund}
\xymatrix{
Y_n \ar@<-.5ex>[d]_{\phi_n} \ar@<.5ex>[d]^{\psi_n} \ar[r] & \PP^{\times n}
\ar@<-.5ex>[d]_{\alpha_n} \ar@<.5ex>[d]^{\beta_n} \\
Y_{n-1} \ar[r]	& \PP^{\times (n-1)}.}\eeq
Here  
$\alpha_n(a_1, \ldots, a_n) = (a_1, \ldots, a_{n-1})$, and 
$\beta_n(a_1, \ldots, a_n) = (a_2, \ldots, a_n)$.
Let $\sM_n := \sO(1, \ldots, 1)|_{Y_n}$.  Let $\sM'_n := \sM_n|_{Y'_n}$, and let $\sM^e_n := \sM_n |_{Y_n^e}$.  
\end{notation}

We will want to be able to  restrict to the essential part $Y^e_\infty$.  To do this, we use:  

\begin{lemma}\label{lem-essential}
Let $R$ be any  connected graded $\kk$-algebra generated in degree 1, and adopt Notation~\ref{not1prime}.  
Then
\[ Y_N^e = \bigcap_{i,j\in \NN} \phi^i \psi^j( Y_{i+j+N}) = \bigcap_j \psi^j(Y'_{N+j})\]
for any $N \in \NN$.  
Further, 
\beq \label{chainapp} \psi(Y^e_{N+1}) = \phi(Y^e_{N+1}) = Y^e_N.\eeq
Thus, $\Phi_N (Y_\infty^e) = Y_N^e$, and
$Y_\infty^e = \bigcap_n \Psi^n(Y_\infty)$.  
In particular, 
$\Psi (Y_\infty^e) = Y_\infty^e$.  
\end{lemma}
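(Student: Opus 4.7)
The plan is to prove the listed equalities in the order they appear, using two key tools repeatedly: the commutativity relation $\phi_{n-1}\psi_n = \psi_{n-1}\phi_n$ from \eqref{fund}, and Lemma~\ref{lem:chain}, which lets me commute a morphism past a descending intersection of closed subschemes of a noetherian scheme. For the first equality $Y_N^e = \bigcap_{i,j}\phi^i\psi^j(Y_{i+j+N})$, the containment $\supseteq$ is the case $i=j$; for $\subseteq$, given $i,j$ I would pick $k \geq \max(i,j)$ and rearrange
\[
\phi^k\psi^k(Y_{2k+N}) = \phi^i\psi^j\bigl(\psi^{k-j}\phi^{k-i}(Y_{2k+N})\bigr) \subseteq \phi^i\psi^j(Y_{i+j+N}).
\]
The identification with $\bigcap_j \psi^j(Y'_{N+j})$ then follows by fixing $j$ and using Lemma~\ref{lem:chain} to pull $\psi^j$ outside the descending intersection $\bigcap_i \phi^i(Y_{N+j+i}) = Y'_{N+j}$.

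Next I would prove the recurrences $\psi(Y^e_{N+1}) = \phi(Y^e_{N+1}) = Y^e_N$. For $\phi$, write $Y^e_{N+1} = \bigcap_j \psi^j(Y'_{N+1+j})$, apply Lemma~\ref{lem:chain} to commute $\phi$ past the intersection, move $\phi$ past each $\psi^j$, and finally observe that $\phi(Y'_{N+1+j}) = Y'_{N+j}$, again by Lemma~\ref{lem:chain} applied to the chain defining $Y'_m$ (noting that the $i=0$ term $Y_m$ is redundant). For $\psi$, the analogous manipulation yields $\psi(Y^e_{N+1}) = \bigcap_{k\geq 1}\psi^k(Y'_{N+k})$, and this equals $Y^e_N$ because the missing $k=0$ term, namely $Y'_N$, is automatically redundant: the commutative square $\psi^k \Phi_{N+k} = \Phi_N \Psi^k$ forces $\psi^k(Y'_{N+k}) \subseteq Y'_N$ for every $k \geq 0$.

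With \eqref{chainapp} in hand, the maps $\phi \colon Y^e_{N+1} \to Y^e_N$ form an inverse system of proper, scheme-theoretically surjective morphisms, so the argument used in Lemma~\ref{lem:Yprime} gives $\Phi_N(Y_\infty^e) = Y_N^e$. For the identity $Y_\infty^e = \bigcap_n \Psi^n(Y_\infty)$ I would compute level by level using Definition~\ref{def:image}: the $N$-level of $\Psi^n(Y_\infty)$ is $\psi^n \Phi_{N+n}(Y_\infty) = \psi^n(Y'_{N+n})$, so the $N$-level of $\bigcap_n \Psi^n(Y_\infty)$ is $\bigcap_n \psi^n(Y'_{N+n}) = Y_N^e$, as already established. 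Finally, $\Psi(Y_\infty^e) = Y_\infty^e$ is immediate from \eqref{chainapp}, since at each level the scheme-theoretic image equals $\psi(Y_{N+1}^e) = Y_N^e$. The main technical nuisance throughout will be verifying the descending-chain hypothesis of Lemma~\ref{lem:chain} before each application, which always reduces to the elementary observation that $\phi$ and $\psi$ send the relevant subschemes into smaller ones.
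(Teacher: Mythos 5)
Your proof is correct and follows essentially the same strategy as the paper's, using the commutativity $\phi\psi = \psi\phi$ together with Lemma~\ref{lem:chain} at each step. The paper's own proof is terse (e.g., "which is clear," "the second equality is similar," "the equations~\eqref{chainapp} follow from Lemma~\ref{lem:chain}"), and you have filled in precisely those details: the coinitiality argument $\phi^k\psi^k = \phi^i\psi^j\bigl(\phi^{k-i}\psi^{k-j}\bigr)$, the observation that $\phi(Y'_{m+1}) = Y'_m$ and $\psi(Y'_{m+1}) \subseteq Y'_m$ (so that the missing boundary terms are redundant), and the chain hypotheses required before each application of Lemma~\ref{lem:chain}. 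The one organizational difference is minor: for $Y_\infty^e = \bigcap_n \Psi^n(Y_\infty)$, the paper first proves $\Psi(Y_\infty^e)=Y_\infty^e$ and deduces one containment from it before separately checking the other, whereas you compute the $N$-level of $\bigcap_n \Psi^n(Y_\infty)$ directly via Definition~\ref{def:image} and Lemma~\ref{lem:Yprime} to get $Y_N^e$ in one stroke, then read off $\Psi(Y_\infty^e)=Y_\infty^e$ afterward. Both are valid; your version is slightly more uniform.
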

\begin{proof}
Fix $N \in \NN$.  To prove the first equality, it suffices to show that $\{ \phi^k \psi^k
(Y_{2k+N}) \}_k$ is
coinitial 
in $\{\phi^i \psi^j( Y_{i+j+N})\}_{i,j}$, which is clear. The second equality is similar.
The equations \eqref{chainapp} follow from Lemma~\ref{lem:chain}, and by Lemma~\ref{lem:Yprime} we have  $\Phi_N (Y_\infty^e) = Y_N^e$.  

By definition, 
\[\Psi(Y_\infty^e)= \varprojlim \Phi_n \Psi(Y_\infty^e) = \varprojlim \psi \Phi_n(Y_\infty^e) = 
\varprojlim \psi(Y_n^e).\]
  We have seen this is $\varprojlim Y_{n-1}^e = Y_\infty^e$.
It follows that $Y_\infty^e \subseteq \bigcap_n \Psi^n(Y_\infty)$. 
On the other hand, 
\[\Phi_m(\bigcap_n \Psi^n(Y_\infty)) \subseteq \bigcap_n(\psi^n(Y'_{n+m}))=Y_m^e.\]
  Thus $\bigcap_n \Psi^n(Y_\infty) \subseteq \varprojlim Y_n^e$ and the two are equal.  
\end{proof}

If $R$ is strongly noetherian and generated in degree 1, then  the point schemes $Y_n$ stabilize for $n \gg 0$, and it follows that $Y_n = Y'_n = Y^e_n$ for $n \gg 0$. 
The need to distinguish carefully among the various point spaces is an unpleasant feature of the non-strongly noetherian case. 

We follow \cite{RZ2008}.  Adopt Notation~\ref{not1prime}.  Note that $Y_n$ carries a {\em universal (truncated) point module} $\sB_{\leq n} \cong (\phi^n)^* \sM_0   \oplus( \phi^{n-1})^* \sM_{1} \oplus  \cdots \oplus \sM_n$.  If $M$ is the truncated $R_A$-point module represented by 
$\delta_n:  \Spec A \to Y_n,$
then we have
\[ M \cong (\delta_n)^* \sB_{\leq n} \cong (\delta_0)^* \sM_0   \oplus( \delta_1)^* \sM_{1} \oplus  \cdots \oplus (\delta_n)^* \sM_n.\]
A  point module $M$ is determined by its truncations, so given an $R_A$-point module $M$ represented by $\delta:  \Spec A \to Y_\infty$, we have  $M \cong \bigoplus_{n\geq 0} \delta_n^* \sM_n$. 
(We avoid the technicalities necessary to define $M$ as the pullback of a universal point module defined on $Y_\infty$.)

As in \cite{RZ2008}, it is clear from the diagram \eqref{fund} that 
\beq\label{mult} \sM_{n+m} \cong (\phi^m)^* \sM_n \otimes (\psi^n)^* \sM_m, \eeq
and similarly for  $\sM'_n$ and $\sM_n^e$.  
Consider the natural  maps 
\[ \sM_n \to (\phi^m)_*(\phi^m)^* \sM_n \quad \mbox{ and } \quad
 \sM_m \to (\psi^n)_*(\psi^n)^* \sM_m.\]
We obtain maps
\begin{multline}\label{mult2}
H^0(Y_n, \sM_n) \otimes H^0(Y_m, \sM_m) \to H^0(Y_n, (\phi^m)_* (\phi^m)^* \sM_n ) \otimes H^0(Y_m, (\psi^n)_*(\psi^n)^* \sM_m)\\
= H^0(Y_{n+m}, (\phi^m)^* \sM_n) \otimes H^0(Y_{n+m}, (\psi^n)^* \sM_m) \to H^0(Y_{n+m}, \sM_{n+m}).
\end{multline}
The composition gives a multiplicative structure on 
\[ B := \bigoplus_{n \geq 0} H^0(Y_n, \sM_n)\]
and also on 
\[ B' := \bigoplus_{n \geq 0} H^0(Y'_n, \sM'_n) \quad\quad \mbox{ and }  \quad\quad B^e := \bigoplus_{n \geq 0} H^0(Y_n^e, \sM_n^e).\]
By an elementary calculation, these multiplications are associative.  There are obvious algebra homomorphisms $B \to B' \to B^e$.  

The ring $B$ is referred to as the {\em point parameter ring} of $R$, and its basic properties  are worked out in \cite{RZ2008}.   If $V$ is a vector space, we write $T(V)$ for the (free) tensor algebra on $V$.  
\begin{lemma}\label{lem-RZ}
{\rm (\cite[Lemma~4.1]{RZ2008})}
The natural map $T(R_1) \to B$ factors through $R$ to induce a homomorphism $\theta:  R \to B$.  An element $a$ is in $ (\ker \theta)_n$ if and only if  for all commutative $\kk$-algebras $A$  and  all truncated  $R_A$-point modules $M$ of length $n+1$, we have $M_0 a = 0$.
\end{lemma}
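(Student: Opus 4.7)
The plan is to define $\theta_n$ via the universal truncated point module $\sB_{\leq n}$ on $Y_n$, and then read off the kernel from the universal property. As noted just above the lemma, $\sB_{\leq n} \cong (\phi^n)^*\sM_0 \oplus (\phi^{n-1})^*\sM_1 \oplus \cdots \oplus \sM_n$, and since $Y_0 = \Spec \kk$ we have $\sB_0 = \sO_{Y_n}$ and $\sB_n = \sM_n$. The graded $R \otimes_\kk \sO_{Y_n}$-module structure on $\sB_{\leq n}$ provides an $\sO_{Y_n}$-linear multiplication $\sB_0 \otimes_\kk R_n \to \sB_n$ assigning to each $a \in R_n$ a global section $\theta_n(a) := 1 \cdot a \in H^0(Y_n, \sM_n) = B_n$.

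To see that $\theta = \bigoplus_n \theta_n \colon R \to B$ is an algebra homomorphism, the key input is associativity of the $R$-action on the universal module $\sB_{\leq n+m}$ over $Y_{n+m}$. Using the identification $\sM_{n+m} \cong (\phi^m)^*\sM_n \otimes (\psi^n)^*\sM_m$ from \eqref{mult}, the two-step action of $R_n \otimes R_m$ on $1 \in \sB_0$ (first landing in $\sB_n = (\phi^m)^*\sM_n$, then in $\sB_{n+m} = \sM_{n+m}$ via the shifted-module structure captured by $(\psi^n)^*\sM_m$) recovers precisely the product in $B$ defined via \eqref{mult2}; this yields $\theta_{n+m}(ab) = \theta_n(a)\theta_m(b)$. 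Reducing to degree $1$, where $\theta_1$ is the identification $R_1 = H^0(\PP, \sO(1)) = H^0(Y_1, \sM_1) = B_1$ induced by the embedding $Y_1 \hookrightarrow \PP$ from \eqref{fund}, now shows that the composite $T(R_1) \to R \xrightarrow{\theta} B$ coincides with the natural map $T(R_1) \to B$, so the latter indeed factors through $R$.

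For the kernel, fix $a \in R_n$. For any morphism $\delta \colon \Spec A \to Y_n$ classifying a truncated $R_A$-point module $M$ of length $n+1$, the universal property gives an $R$-equivariant identification $\delta^*\sB_{\leq n} \cong M$; under this, $\delta^*\theta_n(a) = m \cdot a \in M_n$, where $m \in M_0$ is the generator corresponding to $1 \in \sB_0$. The ``only if'' direction of the claimed characterization is then immediate: if $\theta_n(a) = 0$ then its pullback vanishes in every such $M$. Conversely, if $m \cdot a = 0$ for every truncated $R_A$-point module $M$ of length $n+1$, then applying this to the point modules classified by the inclusions $\Spec A_i \hookrightarrow Y_n$ for an affine open cover $\{\Spec A_i\}$ of $Y_n$ shows that $\theta_n(a)$ restricts to zero on each $\Spec A_i$, so $\theta_n(a) = 0$ by the sheaf property.

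The main technical step will be the multiplicativity verification in the second paragraph: it requires carefully matching the two ways of presenting the universal module on $Y_{n+m}$ (via $\phi^m$ and via $\psi^n$) with the two-step factorization of the $R_n R_m$-action on a point module through its truncation $M/M_{>n}$ and its shift $M[n]_{\geq 0}$, so that the product in $B$ really does record the action of the product in $R$.
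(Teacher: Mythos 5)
The paper does not supply its own proof of this lemma: it cites it as \cite[Lemma~4.1]{RZ2008}, so there is no in-paper argument to compare against. Your proof is correct and follows what is essentially the standard route: define $\theta_n(a) = 1\cdot a$ via the universal truncated point module $\sB_{\leq n}$ on $Y_n$, reduce multiplicativity to the compatibility of the two-step $R_n\otimes R_m$-action with the decomposition $\sM_{n+m}\cong (\phi^m)^*\sM_n\otimes(\psi^n)^*\sM_m$ (noting that $(\phi^m)^*\sB_{\leq n}$ is the degree-$\leq n$ part of $\sB_{\leq n+m}$ and that the subsequent action of $R_m$ is pulled back along $\psi^n$), and then read off the kernel by pulling $\theta_n(a)$ back along classifying maps $\Spec A\to Y_n$, using that $M_0a = A\cdot(ma)$ since $M$ is cyclic. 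Your handling of the ``if'' direction is the one place worth a sharper word: the restrictions $\sB_{\leq n}|_{\Spec A_i}$ are genuine truncated $R_{A_i}$-point modules because the $\sB_j$ are invertible on $Y_n$, and the canonical generator of $\sB_0|_{\Spec A_i}=\sO_{\Spec A_i}$ is $1$, so $\theta_n(a)|_{\Spec A_i}=1\cdot a$ is precisely the section the hypothesis kills. That gap is easy to fill and you have in fact indicated it; with that, the argument is complete.
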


The map $\theta$ induces homomorphisms $\theta':  R \to B'$ and $\theta^e:  R\to B^e$.  We obtain immediately the following universal property of $\theta'$:
\begin{corollary}\label{cor-ann}
Let $R$ be a connected graded $\kk$-algebra generated in degree 1, and define $\theta':  R \to B'$ as above.  Then
\begin{multline*}
\ker \theta' = \bigcap \big\{ \Ann_R(M_0) \st \mbox{ $M$ is an $R_A$-point module for some commutative $\kk$-algebra $A$ } \big\} \\
= \bigcap \big\{ \Ann_R(M) \st \mbox{ $M$  is an $R_A$-point module for some commutative $\kk$-algebra $A$ } \big\}.
\end{multline*}
\end{corollary}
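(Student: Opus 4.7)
The plan is to split the statement into its two equalities. For the second equality $\bigcap_M \Ann_R(M_0) = \bigcap_M \Ann_R(M)$, one inclusion is immediate. For the other, I exploit the fact that $R$ is generated in degree~$1$: if $M$ is any $R_A$-point module, so is the shift $M[i]$ (cyclically generated by $M_i = M_0\cdot R_i$, with $M[i]_j=M_{i+j}$ a rank one projective $A$-module for all $j$). Since $(M[i])_0=M_i$, the hypothesis that $a$ annihilates $N_0$ for every point module $N$ forces $M_i\cdot a=0$ for all $i$, so $a\in \bigcap_i\Ann_R(M_i)=\Ann_R(M)$.

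The first equality requires unwinding the definition of $\theta'$. For $a\in R_n$, set $s_a:=\theta(a)\in H^0(Y_n,\sM_n)$; then $\theta'(a)=s_a|_{Y'_n}$. The basic compatibility, implicit in Lemma~\ref{lem-RZ}, is that for any $R_A$-point module $M$ classified by $\delta\colon\Spec A\to Y_\infty$, the pullback $\delta_n^* s_a\in \delta_n^*\sM_n$ is precisely $M_0\cdot a\in M_n$. The inclusion $\ker\theta'\subseteq\bigcap_M\Ann_R(M_0)$ follows immediately: the truncation $\delta_n = \Phi_n\circ\delta$ factors through $Y'_n=\Phi_n(Y_\infty)$, so if $s_a$ vanishes on $Y'_n$ then $M_0\cdot a=\delta_n^* s_a = 0$.

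The reverse inclusion is the substantive part. My approach is to produce a single commutative $\kk$-algebra $A_0$ carrying an $R_{A_0}$-point module $M_{\mathrm{univ}}$ whose vanishing condition $(M_{\mathrm{univ}})_0\cdot a = 0$ already forces $s_a|_{Y'_n}=0$. By Lemma~\ref{lem:Yprime} the system $\cdots\to Y'_{m+1}\to Y'_m\to\cdots$ consists of scheme-theoretically surjective morphisms, so the construction in the proof of Proposition~\ref{prop-surj-factor}, applied to this system, produces compatible set- and scheme-theoretically surjective affine covers $U'_m=\Spec T'_m\to Y'_m$ with injective transition maps $T'_m\hookrightarrow T'_{m+1}$. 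Setting $A_0:=\varinjlim T'_m$, the affine scheme $U'_\infty:=\Spec A_0$ comes equipped with a canonical morphism to $Y_\infty$; pulling back the universal truncated point modules from each $Y'_m$ and passing to the direct limit over $m$ yields a tautological $R_{A_0}$-point module $M_{\mathrm{univ}}$. By hypothesis $(M_{\mathrm{univ}})_0\cdot a=0$, so $s_a$ pulls back to zero under the composite $\Spec A_0\to U'_n=\Spec T'_n\to Y'_n$. The first arrow is scheme-theoretically surjective because $T'_n\hookrightarrow A_0$, and the second by construction, so the composite is scheme-theoretically surjective; since $\sM'_n$ is invertible on $Y'_n$, pullback on global sections is injective, and we conclude $s_a|_{Y'_n}=0$.

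The main obstacle I anticipate is confirming that the tautological $M_{\mathrm{univ}}$ really is an $R_{A_0}$-point module (the compatibility of the universal truncated point modules under pullback along $U'_{m+1}\to U'_m$ requires some care) and that scheme-theoretic surjectivity survives the inverse limit of the atlas strongly enough to ensure injectivity of pullback on global sections of the invertible sheaf $\sM'_n$. Once these technical points are in hand, the corollary follows.
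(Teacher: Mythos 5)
Your argument is correct, but for the substantive inclusion $\bigcap_M\Ann_R(M_0)\subseteq\ker\theta'$ it takes a genuinely different, and heavier, route than the paper. The paper's proof is a two-line appeal to the universal property of $Y'_n$: the hypothesis ``$M_0a=0$ for every $R_A$-point module $M$'' says exactly that every map $\Spec A\to Y_\infty\xrightarrow{\Phi_n}Y_n$ factors through the zero subscheme of the section $\theta(a)|_{Y'_n}$ of the invertible sheaf $\sM'_n$, i.e.\ that $\Phi_n$ itself factors through that closed subscheme; since by Lemma~\ref{lem:Yprime} (and Definition~\ref{def:image}) $Y'_n=\Phi_n(Y_\infty)$ is the \emph{smallest} closed subscheme of $Y_n$ through which $\Phi_n$ factors, the section vanishes identically on $Y'_n$ --- no atlas, limit ring, or universal module is needed, which is why the paper can say ``immediately from Lemma~\ref{lem-RZ} and the definition of $Y'_n$.'' You instead materialize a single test family: the pro-affine atlas of Proposition~\ref{prop-surj-factor} applied to the system $\{Y'_m\}$ (whose transition maps are scheme-theoretically surjective by Lemma~\ref{lem:Yprime}), the resulting $\Spec A_0\to Y_\infty$, and injectivity of $H^0(Y'_n,\sM'_n)\to H^0(\Spec A_0,\delta_n^*\sM'_n)$ along the scheme-theoretically surjective composite. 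This does work, and the two points you flag are fine: a morphism $\Spec A_0\to Y_\infty$ \emph{is} by definition an $R_{A_0}$-point module (that is the content of $F\cong Y_\infty=\varprojlim Y_n$), and $T'_n\hookrightarrow A_0$ together with invertibility of $\sM'_n$ gives the injectivity on global sections (tensoring the injection $\sO_{Y'_n}\to h_*\sO_{\Spec A_0}$ with the invertible $\sM'_n$). So your construction essentially re-proves, in a concrete special case, the minimality property the paper invokes abstractly; what it buys is an explicit single point module over one ring $A_0$ that detects $\ker\theta'$, at the cost of considerably more machinery. For the second equality both you and the paper use the identical shift trick ($M_ka=(\Psi^kM)_0a$), merely packaged as opposite inclusions; the only caveat is that $M[i]_{\geq 0}$ is cyclic only after twisting by $(M_i)^{-1}$ (as in the proof of Lemma~\ref{lem:Hdefn}), a harmless adjustment that does not change the annihilator condition and which the paper elides in the same way.
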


\begin{proof}
The first equality follows immediately from Lemma~\ref{lem-RZ} and the definition of $Y_n'$. 

For the second, let $M$ be an $R_A$-point module and let $a \in \ker \theta'$.  Then $M_k a = (\Psi^k M)_0 a = 0$ for any $k \in \NN$, so $Ma = 0$.  Thus
$\ker \theta' \subseteq \bigcap_M \Ann_R M \subseteq \bigcap_M \Ann_R(M_0) = \ker \theta'$.
\end{proof}

We are most interested in the ring $B^e$ and the map $\theta^e:  R \to B^e$.  Here we have an important universal property, which holds in very large
generality:  we do not even need $R$ to be noetherian. 

We define:  \begin{defn}\label{def-BC}
A {\em birationally commutative} algebra is a graded subalgebra of a skew polynomial extension $A[t; \tau]$, where $A$ is a commutative noetherian $\kk$-algebra (concentrated in degree 0) and $\tau \in \Aut(A)$. 
\end{defn}
Note that for our purposes here, we will require the commutative algebra in Definition \ref{def-BC} to be noetherian, although we caution that this is non-standard.

Then we have:

\begin{theorem}\label{thm-universal1}
Let $R$ be a connected graded $\kk$-algebra, generated in degree 1, and define $\theta^e:  R \to B^e$ as above.
Let $\alpha:  R \to \Delta$ be a homomorphism of graded $\kk$-algebras,
where $\Delta$ is birationally commutative.  Then $\alpha$ factors through $\theta^e$ up to finite dimension; that is, there
is some $n \in \NN$ so that $\ker \alpha \supseteq (\ker \theta^e)_{\geq n}$.
\end{theorem}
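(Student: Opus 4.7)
The plan is to produce, from $\alpha \colon R \to \Delta \subseteq A[t;\tau]$, a $K$-rational point of $Y_\infty^e$ for some field $K$, and then read off that $\ker \theta^e \subseteq \ker \alpha$ in high degree from the universal nature of the tautological point module over $Y^e$.

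We begin with two reductions. If $\alpha(R_1) = 0$, then $\ker\alpha \supseteq R_{\geq 1}$ and the conclusion is immediate, so we assume $\alpha(R_1) \neq 0$. Next, we reduce to the case that $A$ is an integral domain. The nilradical $\mathfrak{n} \subseteq A$ is $\tau$-stable and $\mathfrak{n}[t;\tau]$ is a nilpotent two-sided ideal of $A[t;\tau]$, so replacing $A$ by $A/\mathfrak{n}$ loses only information on those $r \in R_n$ whose image lies in $\mathfrak{n}^\bullet t^n$; since $\alpha(R_n) = \alpha(R_1)^n$ and $\mathfrak{n}^N = 0$ for some $N$, these elements live in an initial segment of degrees, corresponding exactly to the ``finite-dimensional'' slack in the statement. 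Then $\tau$ permutes the finite set of minimal primes of $A/\mathfrak{n}$; picking an orbit and passing to a $\tau$-invariant quotient domain completes the reduction. Henceforth $A$ is a domain with fraction field $K$, and $\tau$ extends to $\Aut(K)$.

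Now we construct a family of compatible point modules. Inside the skew Laurent ring $K[t,t^{-1};\tau]$, for each $k \geq 0$ define a graded right $R_K$-module
\[
M^{(k)} \;:=\; \bigoplus_{n \geq 0} K\, t^{\,n-k} \;\subseteq\; K[t,t^{-1};\tau],
\]
with the $R$-action given by right multiplication by $\alpha$. Because $A$ is a domain and $\alpha(R_1) \neq 0$, the formula $\alpha(r_1)\cdots\alpha(r_n) = \bar\alpha(r_1)\tau(\bar\alpha(r_2))\cdots\tau^{n-1}(\bar\alpha(r_n))\, t^n$ ensures $\alpha(R_n) \neq 0$ for every $n$; thus each $M^{(k)}$ is a cyclic $R_K$-module with generator $t^{-k}\in M^{(k)}_0 = K$, with each graded piece free of rank $1$ over $K$, and so is an $R_K$-point module. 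By construction $M^{(k)}[k]_{\geq 0} \cong M^{(0)}$ as graded $R_K$-modules, which shows $[M^{(0)}] \in \Psi^k(Y_\infty(K))$ for every $k \geq 0$. By Lemma~\ref{lem-essential} this identifies $[M^{(0)}]$ with a $K$-point of $Y^e_\infty$, giving a morphism $\delta \colon \Spec K \to Y^e_\infty$.

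Finally, the definition of $\theta^e$ means that for $r \in R_n$, the pullback $\delta^*\theta^e(r)$ is precisely the element $1\cdot r = \alpha(r)$ in the degree-$n$ piece $(M^{(0)})_n = Kt^n$ of the point module classified by $\delta$. So if $r \in \ker\theta^e$, then $\alpha(r) = 0$ in $Kt^n$, and since $A \hookrightarrow K$ this forces $\alpha(r) = 0$ in $At^n$; combined with the reduction above, we obtain $(\ker\theta^e)_{\geq N} \subseteq \ker\alpha$ for a suitable $N$. The main obstacle is the reduction to a domain in the first paragraph: one must handle the interaction of $\tau$ with the nilradical and with the permutation action on minimal primes. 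Once in the domain case, constructing the family $\{M^{(k)}\}$ and unwinding the definitions is essentially formal, and this is where the essential-part construction $Y^e_\infty = \bigcap_k \Psi^k(Y_\infty)$ pays off, because the bi-infinite skew Laurent extension supplies the required extensions of $M^{(0)}$ in arbitrarily negative degrees.
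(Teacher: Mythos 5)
Your domain-case construction is essentially the paper's own argument in its easy case (when the ideal $I:=A\,\alpha(R_1)t^{-1}$ is the unit ideal): one exhibits $\Delta$ (or here $K[t,t^{-1};\tau]_{\geq -k}$) as a compatible family of point modules whose classifying maps land in $Y^e_\infty$ because $\tau$ is invertible, and then pulls back sections of $\sM^e_n$ to conclude $\ker\theta^e\subseteq\ker\alpha$. That part is fine. The genuine gap is the reduction to a domain. Replacing $A$ by $A/\mathfrak{n}$ (or by a quotient modulo minimal primes) only \emph{enlarges} the kernel: $\ker\alpha\subseteq\ker\bar\alpha$, and the theorem's conclusion is a \emph{lower} bound on $\ker\alpha$, so proving $(\ker\theta^e)_{\geq n}\subseteq\ker\bar\alpha$ says nothing about $\ker\alpha$ itself. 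Your justification --- that elements $r\in R_n$ with $0\neq\alpha(r)\in\mathfrak{n}t^n$ occur only in an initial segment of degrees because $\alpha(R_n)=\alpha(R_1)^n$ and $\mathfrak{n}^N=0$ --- is false: take $A=\kk[x]/(x^2)$, $\tau=\mathrm{id}$, and a map with $\alpha(u)=t$, $\alpha(v)=xt$ for two degree-one generators; then $\alpha(u^{n-1}v)=xt^n\neq 0$ in every degree $n$. (Individual elements of $R_n$ mapping into $\mathfrak{n}t^n$ need not be products of $n$ elements mapping into $\mathfrak{n}t$.) The subsequent step, ``picking an orbit and passing to a $\tau$-invariant quotient domain,'' has the same kernel-transfer problem, and moreover the $\tau$-invariant quotient attached to a nontrivial orbit of minimal primes is not a domain, while a single minimal prime need not be $\tau$-invariant.

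The paper's proof shows how to do this correctly, and notably it never needs a domain (point modules make sense over any commutative noetherian $A$, so the ``$I=A$'' case works in that generality). Instead of quotienting blindly, it takes a minimal primary decomposition $(0)=J_1\cap\cdots\cap J_s$, lets $K$ be the intersection of the components whose primes contain some $\tau^i(I)$ and $K'$ the intersection of the rest, and uses $I\tau(I)\cdots\tau^{N-1}(I)\subseteq K$ together with $K\cap K'=0$ to show that $\ker\alpha$ and $\ker(\pi\alpha)$ agree in degrees $\geq N$ for the quotient $\pi:A[t;\tau]\to(A/K')[t;\bar\tau]$ --- this is exactly the justification your nilradical step is missing, and it is also where the finite-dimensional slack in the statement actually comes from. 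After that reduction, $I$ contains a regular element, and the paper passes not to a residue field but to the total ring of quotients $A'=Q(A)$, an \emph{overring} of $A$ (so kernels are literally unchanged), where $IA'=A'$ and the first case applies. If you want to salvage your approach, you should replace the quotient-by-$\mathfrak{n}$ and quotient-by-a-prime steps with this primary-decomposition argument and work over $Q(A)$ rather than over the fraction field of a domain.
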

\begin{proof}
Without loss of generality, we may assume that $\Delta= A[t; \tau]$, where $A$ is a commutative noetherian $\kk$-algebra and $\tau \in \Aut_\kk
(A)$.  Let $S:= \alpha(R)$, and let $I:= A( S_1 t^{-1})
\subseteq A$.  Adopt Notation~\ref{not1}.

First suppose that $I = A$.  Then the natural map $R_A \to \Delta$
is
surjective, so $\Delta$ is an $R_A$-point module.  Since $Y_\infty$ parameterizes point modules, there is  a morphism
$\delta: \Spec A\to Y_{\infty}$, 
so that $A t^n = (\delta_n)^* \sM_n = \delta^* (\Phi_n)^* \sM_n$ for all $n \in \NN$.  

Since $\Delta[1]_{\geq 0} \cong \Delta^{\tau}$ (as an $ R_A$-module), the diagram
\[ \xymatrix{
\Spec A \ar[r]^{\delta_{n+m}} \ar[d]_{\tau^m} & Y'_{n+m} \ar[d]^{\psi^m} \\
\Spec A \ar[r]_{\delta_n} & Y'_{n} }
\]
commutes for all $n,m \in \NN$. 
Note that this shows the (scheme-theoretic) image of $\delta_n$ is contained in
$Y^e_n = \bigcap \psi^m( Y'_{n+m})$, as $\tau \in \Aut(A)$.    From the map
\[ \xymatrix{ B^e = \bigoplus H^0(Y_n^e, \sM_n^e) \ar[r] & \bigoplus H^0(\Spec A, (\delta_n)^*\sM^e_n) =
\Delta ,}
\]
we see that $\alpha$ factors through $\theta^e$. 

Now suppose that $I$ is a proper ideal of $A$.  Let $(0) = J_1 \cap \cdots \cap
J_s$ be a minimal primary decomposition in $A$, where $J_j$ is
$P_j$-primary. Reorder
the $P_j$ so that
$P_1, \ldots, P_r$ are the primes containing some $\tau^i(I)$, and
$P_{r+1},\ldots, P_s$ are the primes that do not contain any $\tau^i(I)$.  Let
$K := J_1 \cap \cdots \cap J_r$, and let $K':= J_{r+1} \cap \cdots \cap J_s$; we
may have $K = A$ or $K'=A$.  However, $K \cap K' = 0$, and $K$ and $K'$ are
$\tau$-invariant.  

Since $\tau$ permutes the finite set $P_1, \ldots, P_r$, there is some $N$ so that  
$I \tau(I) \cdots \tau^{N-1}(I) \subseteq K$.  Thus $S_n t^{-n} \cap K'
\subseteq K \cap K' = 0$ for $n \geq N$.  If $K'=A$ then this means that $S$ is
finite-dimensional, so the result holds.  Otherwise, let $\bar{\tau}$ be the
induced automorphism of $A/K'$, and let  $\pi:  A[t;\tau]\to
(A/K')[t;\bar{\tau}]$ be the natural map.   We see that $(\ker \alpha)_n =
(\ker \pi \alpha)_n$ for $n \geq N$, and we have reduced to the case that $K' = A$. 

It thus suffices to prove the proposition in the case that $I$ is  not
contained in
any associated prime of $A$.  Assume this is so.  Let
$A':= Q(A)$, the total ring of quotients of $A$. 
By prime avoidance
\cite[Lemma~3.3]{Eis}, $I$ is not contained in the union of the associated
primes of $A$, and so by 
  \cite[Theorem~3.1(b)]{Eis},   $I$ contains a regular element of $A$.  Thus  $IA'= A'$. Note that $\tau$ extends to
an automorphism $\tau'$ of $A'$.   There is an induced map $\alpha': R \to
A'[t; \tau']$; and as $I A' = A'$ this factors through $\theta^e$ by the first part
of the proof.  But $A \subseteq
A'$, so $\ker \alpha = \ker \alpha'$ and $\alpha$ also factors through $\theta^e$.
\end{proof}

We note that in the situation of the last paragraph of the proof,   $A'[t;
\tau']$ is torsion-free.  For, if $0 \neq a \in A'$, then $
A' a t^k \cdot R_n = a A' t^{n+k} \neq 0$, for any $n, k$.  Thus we have shown:
\begin{proposition}\label{prop-TRQ}
Let $R$ be a connected graded $\kk$-algebra generated in degree 1.  
Let $g:  R \to A[t; \tau]$ be a map of graded rings, where $A$ is commutative
noetherian and $\tau \in \Aut_\kk(A)$.  Let $I:=\ker g$.  Then there is a map
$\pi: A[t; \tau] \to A'[t; \tau']$ of commutative graded
$\kk$-algebras, where $A'$ is a noetherian total ring of quotients with an
automorphism $\tau'$ extending $\tau$, so that $\ker \pi g $ is equal to the
saturation of $I$.  Further, $\pi( g(R)) A' = A'[t; \tau']$.   \qed
\end{proposition}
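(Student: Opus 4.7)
My plan is to follow the argument already given inside the proof of Theorem~\ref{thm-universal1}, but make the map $\pi$ explicit and verify both the image and kernel statements. Set $J := A \cdot (g(R_1) t^{-1})$ and take a minimal primary decomposition $(0) = \bigcap_{j=1}^s L_j$ with $L_j$ being $P_j$-primary. Reorder so that $P_1, \ldots, P_r$ contain some $\tau^i(J)$ while $P_{r+1}, \ldots, P_s$ do not, and set $K := L_1 \cap \cdots \cap L_r$ and $K' := L_{r+1} \cap \cdots \cap L_s$. Since $\tau$ permutes the associated primes and the defining condition is $\tau$-stable, $K$ and $K'$ are $\tau$-invariant; by minimality $K \cap K' = 0$. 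Let $A'' := A/K'$; by construction the image of $J$ in $A''$ is not contained in any associated prime of $A''$, so by prime avoidance it contains a regular element. Setting $A' := Q(A'')$ and letting $\tau'$ be the induced automorphism of $A'$, the desired $\pi$ is the associated map of skew polynomial rings.

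The image claim $\pi(g(R)) A' = A'[t;\tau']$ follows quickly from $\bar{J}A' = A'$: writing $1 = \sum c_i \pi(a_i)$ in $A'$ with $a_i t \in g(R_1)$ and $c_i \in A'$ gives $A' \pi g(R_1) = A' t$. An induction using the skew-polynomial identity $t^n \cdot \pi(a_i) t = \tau'^n(\pi(a_i)) t^{n+1}$, together with the observation that $\tau'^n$ sends the relation $\sum c_i \pi(a_i) = 1$ to another such relation, yields $A' \pi g(R_n) = A' t^n$ for all $n \geq 1$.

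For the kernel, I will choose $N$ large enough that $\prod_{i=0}^{N-1} \tau^i(J) \subseteq K$. This is possible because $\tau$ acts with finite orbits on $\{P_1, \ldots, P_r\}$: for $N$ sufficiently large, each $P_j$ ($j \leq r$) receives at least $n_j$ factors of the form $\tau^i(J) \subseteq P_j$, where $n_j$ satisfies $P_j^{n_j} \subseteq L_j$, so the corresponding subproduct lies in $L_j$. For $r \in R_n$ with $n \geq N$ we then have $g(r) \in K t^n$; if additionally $\pi g(r) = 0$ then $g(r) \in K't^n$ as well, so $g(r) \in (K \cap K') t^n = 0$. Hence $(\ker \pi g)_n = I_n$ for all $n \geq N$. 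The saturation identity follows: for homogeneous $x \in (\ker \pi g)_m$, $xR_k \subseteq I_{m+k}$ whenever $m+k \geq N$, so $x \in I^{\mathrm{sat}}$; conversely, for $x \in I^{\mathrm{sat}}$ homogeneous with $\pi g(x) = at^m$, the relation $\pi g(x) \cdot \pi g(R_k) = 0$ combined with $A' \pi g(R_k) = A' t^k$ forces $a \cdot \tau'^m(A') = 0$, hence $a = 0$. The only substantive obstacle is the combinatorial choice of $N$, but this is precisely the bookkeeping already implicit in the proof of Theorem~\ref{thm-universal1}.
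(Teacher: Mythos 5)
Your proof is correct and follows the same route the paper intends: pass to $A'' = A/K'$ using a primary decomposition, then to $A' = Q(A'')$, and use that the image of $J$ in $A''$ contains a regular element by prime avoidance. The only stylistic difference is in the saturation step: the paper's (very terse) argument is that $A'[t;\tau']$ is torsion-free as a right $R$-module via $\pi g$ — since for $0\neq a\in A'$, $A'\,at^k\cdot \pi g(R_n) = aA't^{n+k}\neq 0$ — so $\ker\pi g$ is automatically saturated and, agreeing with $I$ in degrees $\geq N$, must equal $I^{\mathrm{sat}}$; your two-containment computation is equivalent, though the phrase ``forces $a\cdot\tau'^m(A')=0$'' is really the conclusion ($a=0$) rather than an intermediate step — what you actually show is that $a$ annihilates the generators $\tau'^m(d_i)$ of $A'$.
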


Theorem~\ref{thm-universal1} shows that $\theta^e$ is universal for maps from $R$ to birationally commutative algebras.  We do not believe this has been observed before.  Motivated by this result, we (loosely) refer to the image $\theta^e(R)$ as the {\em canonical birationally commutative factor} of $R$; but note Example~\ref{eg-freealg} by way of caution.

\begin{example}\label{eg-freealg}
Let $V$ be a $d+1$-dimensional $\kk$-vector space and let $R := T(V)$ be the free  algebra on $V$.  Then $Y_n \cong (\PP^d)^{\times n}$, and $B = B'=B^e = \bigoplus_n H^0(Y_n, \sO(1, \ldots, 1)) \cong R$.  Thus $\theta^e(R)$ itself may not be birationally commutative.  We thank Chelsea Walton for pointing out this example.
 \end{example}

The differences between $B^e$, $B'$ and $B$, or alternatively between $\theta^e$, $\theta'$ and $\theta$, are fairly subtle. If $R$ is strongly noetherian, then the $Y_n$ stabilize, as mentioned.  
Then \cite[Theorem~1.1]{RZ2008} shows that 
$B$ (and therefore $B'$ and $B^e$) is equal in large degree to a  twisted homogeneous coordinate ring on the projective {\em scheme} $Y_\infty$, and that the map  $g: R \to B$ is surjective in large degree.  In fact, we have:
\begin{theorem}\label{thm-universal2}
 Let $R$ be a connected graded strongly noetherian algebra generated in degree 1, and let $g:  R \to B(X, \sL, \sigma)$ be the map constructed in Theorem~\ref{thm-RZ}.  
Then $g$ is universal for maps from $R$ to birationally commutative algebras, and $\ker g$, $\ker \theta^e$, $\ker \theta'$, and $\ker \theta$ are all equal in large degree.
Likewise, $B$, $B'$, $B^e$, and $B(X, \sL, \sigma)$ are all equal in large degree. 
\end{theorem}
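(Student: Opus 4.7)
The plan is to reduce everything to Theorem~\ref{thm-universal1} together with the well-known fact that, for strongly noetherian $R$ generated in degree one, the point schemes $Y_n$ stabilize. More precisely, I would first invoke \cite[Corollary~E4.5]{AZ2001} (or equivalently the strong noetherian property combined with Proposition~\ref{prop-RS}) to conclude that for $n \gg 0$ both $\phi_n$ and $\psi_n$ are isomorphisms. It follows that $Y_n = Y'_n = Y^e_n$ for $n \gg 0$, and since $\sM_n$, $\sM'_n$, $\sM^e_n$ are defined as restrictions of the single sheaf $\sO(1,\dots,1)|_{Y_n}$, we obtain $B_n = B'_n = B^e_n$ in large degree, whence also $\theta_n = \theta'_n = \theta_n^e$ and their kernels agree in large degree.

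Next, I would identify this stabilized algebra with $B(X,\sL,\sigma)$. By construction, the Rogalski--Zhang theorem takes $X$ to be the stabilized point scheme (equal to $Y_n$ for $n\gg 0$), $\sigma$ to be the induced shift $\psi_n\phi_n^{-1}$, and $\sL$ to be the appropriate restriction of $\sM_1$; \cite[Theorem~1.1]{RZ2008} furthermore asserts that $g$ is surjective in large degree. The multiplicative structure on $B$ given by \eqref{mult}-\eqref{mult2} is, after the identifications above, precisely the multiplicative structure on $B(X,\sL,\sigma)$, so $g$ coincides with $\theta$ in large degree. Combining this with the previous paragraph gives the simultaneous equalities of $B$, $B'$, $B^e$, and $B(X,\sL,\sigma)$ in large degree, and similarly for the four kernels.

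Finally, universality is immediate. If $\alpha\colon R\to\Delta$ is any graded homomorphism into a birationally commutative algebra, Theorem~\ref{thm-universal1} yields an integer $n_1$ with $\ker\alpha\supseteq (\ker\theta^e)_{\geq n_1}$. By the identification of the previous step, there is $n_2$ with $(\ker\theta^e)_{\geq n_2}=(\ker g)_{\geq n_2}$. Taking $n=\max(n_1,n_2)$ we get $\ker\alpha\supseteq(\ker g)_{\geq n}$, i.e.\ $\alpha$ factors through $g$ up to finite dimension, as required.

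No step is really the main obstacle; the theorem is essentially a translation of Theorem~\ref{thm-universal1} across the stabilization result. The only technical care needed is in verifying carefully that the two geometric constructions of the multiplication, the intrinsic one defining $B$ and the Rogalski--Zhang one defining $B(X,\sL,\sigma)$, agree after the stabilization identification; this is a direct diagram chase using \eqref{mult} and the fact that the shift $\sigma$ is constructed from $\psi_n\phi_n^{-1}$, and is essentially already present in \cite{RZ2008}.
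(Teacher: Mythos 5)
Your proposal is correct and follows essentially the same route as the paper: stabilization of the point schemes for strongly noetherian $R$ so that $Y_n = Y'_n = Y_n^e$ for $n\gg 0$, identification of the stabilized $B$ with $B(X,\sL,\sigma)$ via the Rogalski--Zhang construction (with $g$ agreeing with $\theta$ and surjective in large degree), and transfer of the universal property of $\theta^e$ from Theorem~\ref{thm-universal1} to $g$. One small correction: the stabilization fact you need ($\phi_n,\psi_n$ isomorphisms for $n\gg 0$) is \cite[Corollary~E4.12]{AZ2001} rather than E4.5 (which gives representability), and Proposition~\ref{prop-RS} alone would not substitute for it, since it concerns finite sets of points rather than the whole point scheme.
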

\begin{proof}
 We certainly have $\ker \theta \subseteq \ker \theta' \subseteq \ker \theta^e$. 
 Since $B(X, \sL, \sigma)$ is birationally commutative, $(\ker \theta^e)_n \subseteq (\ker g)_n$ for $n \gg 0$.  
By \cite[Corollary~E4.12]{AZ2001}, for $n \gg 0$ we have that $\phi_n, \psi_n:  Y_n \to Y_{n-1}$ are isomorphisms.
It follows that $Y_n = Y_n^e$ for $n \gg 0$ and that $Y_\infty = Y_\infty^e$ is a projective scheme, isomorphic to $Y_n$ for $n \gg 0$.
The construction in the proof of \cite[Theorem~1.1]{RZ2008} gives $X = Y_\infty$, so $B(X, \sL, \sigma)$ and $B$ are equal in large degree.
Thus $\ker g$ and $\ker \theta$ are equal in large degree.
Since by \cite[Theorem~1.1]{RZ2008} $g$ is surjective in large degree, $B$, $B'$, $B^e$, and $B(X, \sL, \sigma)$ are equal in large degree.

The universal property of $\theta^e$ clearly also applies to $g$.
\end{proof}

In fact, Theorem~\ref{thm-universal2} holds more generally:  it is enough to assume that $R$ has subexponential growth and that $\phi_n$ and $\psi_n$ are isomorphisms for $n\gg 0$.  
See \cite[Theorem~4.4]{RZ2008}.

We suspect that if $R$ is noetherian, then $\ker \theta$ and $\ker \theta^e$ are equal in large degree; we will see later  that,  in the situation of Theorem~\ref{thm-main}, $\ker \theta'$ and $\ker \theta^e$ are in fact equal.  
 It is possible that $\theta^e$ is always  surjective (in large degree); we do not know of an example of a ring generated in degree 1 where this fails.  
Even under the hypotheses of Theorem~\ref{thm-main}, however, we have not proved these statements.

\section{Coarse moduli:  first properties}\label{COARSE}
We now begin to work towards the proof of Theorem~\ref{thm-main}.  
In this section, we assume the existence of a scheme $X$ that corepresents $\H$:  that is, $X$ is a {\em coarse moduli space} for $\H$.  
We construct an induced automorphism $\sigma$ of $X$ and study some of its properties.  
For simplicity, we assume in the next result that $X$ is a variety, although this is not strictly necessary.

 Let $R$ be a connected graded noetherian $\kk$-algebra generated in degree 1, and adopt Notation~\ref{not1}.

   We will say that $p:  Y_\infty \to X$ {\em is an isomorphism in codimension $d$} if $p^{-1}$ is defined at all points in $X$ of codimension $d$. 

\begin{proposition}\label{prop-II}
Let $R$ be a connected graded noetherian $\kk$-algebra generated in degree 1, and adopt Notation~\ref{not1prime}. 
Further suppose that the projective variety $X$ corepresents $\H$.  

Define  $p:  Y_\infty \to X$  through the commutative diagram
\[ \xymatrix@C=5pt{
& F \cong  Y_{\infty} \ar[ld]_{\pi} \ar[rd]^{p} \\
\H \ar[rr] && X. }\]
Then:  
\begin{enumerate}
\item the
endomorphism $\Psi$ of $Y_{\infty}$ induces an automorphism $\sigma$
of $X$;
\item  $p(  Y^e_\infty )= X$;
\item if  $p$ is an isomorphism in codimension 0, then there is a $\kk$-algebra homomorphism   $g:  R \to K[t;\sigma]$, where $K = \kk(X)$ is the  function field of  $X$.
\end{enumerate}
\end{proposition}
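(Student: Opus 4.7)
For part (1), Lemma~\ref{lem:Hdefn} shows $\Psi$ is an automorphism of $\H$. Writing $\tau:\H\to X$ for the corepresenting morphism, I apply the universal property of corepresentation to $\tau\Psi:\H\to X$ to produce a unique $\sigma:X\to X$ with $\tau\Psi=\sigma\tau$, and similarly $\sigma^{-1}$ from $\Psi^{-1}$; uniqueness forces $\sigma\sigma^{-1}=\id_X$. Since $p=\tau\pi$ and $\pi\Psi=\Psi\pi$, the identity $p\Psi=\sigma p$ follows immediately.

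For part (2), my first step is to verify that $\tau$ is scheme-theoretically surjective. If $\iota:X'\hookrightarrow X$ is the scheme-theoretic image, $\tau$ factors as $\H\xrightarrow{\tau'}X'\hookrightarrow X$; corepresentation applied to $\tau'$ yields a unique $\bar\tau:X\to X'$ with $\bar\tau\tau=\tau'$, and then $\iota\bar\tau\tau=\tau$ forces $\iota\bar\tau=\id_X$ by uniqueness of the factorization of $\tau$ through $X$, so $\iota$ is an isomorphism. Now set $Z=p(Y_\infty)\subseteq X$; from $p\Psi=\sigma p$ one has $\sigma(Z)\subseteq Z$, so the ascending chain $Z\subseteq\sigma^{-1}(Z)\subseteq\cdots$ stabilizes in the noetherian $X$, and for $n_0$ large $W:=\sigma^{-n_0}(Z)$ is $\sigma$-invariant and contains every $\sigma^{-n}(Z)$. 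The scheme-theoretic image of $\tau\pi_n=\sigma^{-n}p$ is $\sigma^{-n}(Z)\subseteq W$, so every cocone map $\tau\pi_n:F\to X$ factors through $W$; by the universal property of the colimit $\H=\varinjlim F$ (Lemma~\ref{lem:Hdefn}), $\tau$ itself factors through $W$, and the scheme-theoretic surjectivity of $\tau$ forces $W=X$, whence $Z=X$. To pass from $p(Y_\infty)=X$ to $p(Y_\infty^e)=X$, I use Proposition~\ref{prop-surj-factor} to write $p_n:Y'_n\to X$ for $n\gg0$, derive $p_n\psi^k=\sigma^k p_{n+k}$ on $Y'_{n+k}$ from $p\Psi=\sigma p$, and invoke Lemma~\ref{lem-essential}, which writes $Y_n^e=\bigcap_k\psi^k(Y'_{n+k})$ as a descending chain in the noetherian $Y_n$ that stabilizes at $\psi^{k_0}(Y'_{n+k_0})$; then $p_n(Y_n^e)=\sigma^{k_0}(p_{n+k_0}(Y'_{n+k_0}))=\sigma^{k_0}(X)=X$.

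For part (3), the assumption that $p^{-1}$ is defined at the generic point $\eta_X\in X$ gives a morphism $\Spec K\to Y_\infty$ lifting $\eta_X$, equivalently a right $R_K$-point module $\sM$ with each $\sM_n$ free of rank one over $K$. Fixing $K$-bases $e_n\in\sM_n$ with $e_0R_n=Ke_n$, I define $g_n:R_n\to K$ by $e_0\cdot r=g_n(r)e_n$, and then $g:R\to K[t;\sigma]$ by $g(r)=g_n(r)t^n$ for $r\in R_n$. Verifying $g$ is a $\kk$-algebra homomorphism reduces to the identity $g_{n+m}(rs)=g_n(r)\sigma^n(g_m(s))$; writing $e_n\cdot s=h_{n,m}(s)e_{n+m}$ one computes $g_{n+m}(rs)=g_n(r)h_{n,m}(s)$, so the question reduces to recognizing $h_{n,m}$, the structure maps of the shifted point module $\Psi^n(\sM)$, as the $\sigma^n$-twists of $g_m$. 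This is the main obstacle: I would handle it by using uniqueness of the codimension-zero lift, noting that $\eta_X$ and $\sigma^n\circ\eta_X:\Spec K\to X$ differ by the scheme automorphism $\Spec(\sigma^n_\#):\Spec K\to\Spec K$ induced by $\sigma$ on the residue field $K$, so the unique lift of $\sigma^n\circ\eta_X$ is the composition of the lift of $\eta_X$ with $\Spec(\sigma^n_\#)$; hence $\Psi^n(\sM)\cong\sM\otimes_{K,\sigma^n}K$ as graded right $R_K$-modules, and tracing through the $K$-twist yields exactly $h_{n,m}(s)=\sigma^n(g_m(s))$.
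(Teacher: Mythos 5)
Parts (1) and (2) follow the same route as the paper; your extra justification in (2) of the scheme-theoretic surjectivity of $p$ (via the $\sigma$-invariance and colimit arguments) fills in a step the paper simply asserts, and is correct.

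For part (3) your route is genuinely different from the paper's, and it has a gap at the step you flag as ``the main obstacle.'' Your uniqueness-of-lifts argument correctly yields $\Psi^n\zeta = \zeta\circ\Spec((\sigma^n)^{\#})$, hence an isomorphism of graded $R_K$-modules $\alpha:\Psi^n(\sM)\xrightarrow{\ \cong\ }\sM\otimes_{K,\sigma^n}K$. But a cyclic point module over a field has $K^*$ as automorphism group, so $\alpha$ is only determined up to a scalar; normalizing it so that $\alpha(e_n)=e_0\otimes 1$ in degree $0$ does \emph{not} force $\alpha(e_{n+m})=e_m\otimes 1$. Tracing through the computation one finds $\alpha(e_{n+m})=e_m\otimes c_m$ for some $c_m\in K^*$ depending on your (arbitrary) choices of $e_m$ and $e_{n+m}$, and hence $h_{n,m}(s)=c_m^{-1}\sigma^n(g_m(s))$, not the identity you need. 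Indeed, for $g(r)=g_n(r)t^n$ to be a ring map under a rescaling $e_n\mapsto\lambda_n e_n$ one needs the cocycle condition $\lambda_{n+m}=\lambda_n\sigma^n(\lambda_m)$, which fails for generic rescalings; so the condition ``$e_0R_n=Ke_n$'' (true for every nonzero $e_n$) does not pin the bases down enough. The gap is closable---construct the $e_n$ inductively by imposing $h_{n,1}=\sigma^n g_1$, which is well-posed precisely because of the isomorphism $\Psi(\sM)\cong\sM\otimes_{K,\sigma}K$, and then the general identity follows since $R$ is generated in degree $1$---but this normalization \emph{is} the substance of the multiplicativity, and ``tracing through the $K$-twist'' does not yield it without that choice. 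The paper avoids the issue entirely by defining $g$ as the composite $R\xrightarrow{\theta}B\to S$ with $S=\bigoplus_n H^0(\Spec K,i_n^*\sM_n)\cong K[t;\tau]$: the ring structure on $B$ and on $S$ is inherited functorially from the decomposition $\sM_{n+m}\cong(\phi^m)^*\sM_n\otimes(\psi^n)^*\sM_m$, so no auxiliary choice of bases is needed.
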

\begin{proof}
$(1)$
The shift map $\Psi$ is trivially an automorphism of $\H$.  
Since $X$ corepresents $\H$, the map $\H \stackrel{\Psi}{\to} \H \to X$ must factor through $X$, giving a unique $\sigma:X \to X$ so that the diagram
\beq\label{diag} \xymatrix{ \H \ar[r]^{\Psi} \ar[d] & \H \ar[d] \\ X\ar[r]_{\sigma} & X}\eeq
commutes. 
Likewise, there is a unique $\sigma':  X \to X$ so that 
\[ \xymatrix{ \H \ar[r]^{\Psi^{-1}} \ar[d] & \H \ar[d] \\ X\ar[r]_{\sigma'} & X}\]
commutes.  It is immediate that $\sigma'$ is the inverse of $\sigma$, and in particular that $\sigma$ is an automorphism.

$(2)$
By Proposition~\ref{prop-surj-factor}, for some $N\gg 0 $ there is a morphism $f_N: Y'_N \to X$ so that $p = f_N \Phi_N$.
For $n \geq N$ let $f_n:= f_N \phi^{n-N}:  Y'_n \to X$.
Then 
 $p \Psi^m = f_N \psi^m \Phi_{N+m}$ for all $m$.
Note that 
\beq\label{marchmont}f_n \psi = \sigma f_{n+1}.\eeq

By Lemma~\ref{lem-essential} we have $Y_N^e = \bigcap_m \psi^m (Y'_{N+m})$.  As $Y_N$ is noetherian, there is some $M$ so that $Y_N^e = \psi^M (Y'_{N+M})$.
As $X$ corepresents $\H$, certainly $p$ is scheme-theoretically surjective, and so $\sigma^M p$ is scheme-theoretically surjective.
By commutativity of \eqref{diag}, we have $X = \sigma^M p(Y_{\infty}) = p \Psi^M(Y_{\infty}) = f_N \psi^M(Y'_{N+M}) = f_N (Y_N^e) = p(Y_\infty^e)$.

$(3)$  Let  $\eta$ be the generic point  of $X$.  By assumption $p^{-1}$ is defined at  $\eta$; let $\zeta = p^{-1}(\eta)$.  
Let $C= \operatorname{Spec} \sO_{X,\eta}$.
Then there is a commutative diagram
\[ \xymatrix{ Y_\infty \times_X C \ar[r]^{\Phi_n \times  C} \ar[rd]_{p\times C}^{\cong} & Y'_n \times_X C \ar[d]^{f_n \times C} \\
& C}\]
for all $n \geq N$.
Since scheme-theoretic surjectivity is preserved under flat base extension, 
$\Phi_n \times C$ is scheme-theoretically surjective.
By Lemma ~\ref{lem-surj-equal}(\ref{scholium}), $f_n \times C$ is an isomorphism, so $(f_n)^{-1}$ is defined at $\eta$ and $(\Phi_n)^{-1}$ is defined at  $\Phi_n(\zeta) = f_n^{-1}(\eta)$.  

For $n\geq N$, there is a closed subscheme $X_n \subseteq Y^e_n$ so that $f_n: X_n \to X$ is birational.   For $0 \leq n < N$, let $X_n = \phi^{N-n}(X_N)$.
It is a consequence of \eqref{marchmont} that  $\psi(X_{n+1}) = X_{n}$ for all $n \in \NN$; for $n \geq N$, we have that $\psi_{n+1}:  X_{n+1} \to X_n$ is birational.
 
Let $\sK_n$ be the sheaf of total quotient rings of
$X_n$.  Then $H^0(X_n, \sK_n)$ is a total ring of quotients, and
$\phi^*, \psi^*: 
H^0(X_n, \sK_n) \to H^0(X_{n+1}, \sK_{n+1})$ are injective for $n \in \NN$ and are ring isomorphisms for $n
\geq N$.   Let $K:= \kk(\eta) = H^0(X_N, \sK_N)$.

 Let $i_N:  \Spec K \to X_N$ be the inclusion, and let $i_n := \phi^{N-n} i_N:  \Spec K \to X_n \subseteq Y_n$ for $n \in \NN$.  (Our choice of $N$ ensures this is always well-defined.)  
Let $\tau:=  i_N^{-1} \psi_{N+1} \phi^{-1}_{N+1} i_N: \Spec K \to \Spec K$, and let 
$\tau :  K \to K$ also denote the induced algebra automorphism.  We have  
\beq \label{star}
\psi_{n+1} i_{n+1} = i_n \tau
\eeq
for all $n \in \NN$.

For $n \in \NN$, let $\sL_n:= i_n^* \sM_n$.  
Let $S := \bigoplus_{n \geq 0} H^0(\Spec K, \sL_n)$.  This is
 a $\kk$-algebra, with multiplication induced from \eqref{mult}.  Each $S_n$ is a 1-dimensional  $K$-vector space. 
  From \eqref{star}, we see that 
$S \cong K[t;\tau]$.   There is a natural algebra map $ B \to S$, and composing with $\theta$ we obtain the desired map from $R \to S$.  
\end{proof}

In this generality, we cannot say much about the map $g$ or about $g(R)$---we do not know, for example, if $g(R)$ is the canonical birationally commutative factor of $R$ up to finite dimension.  We do note that if we assume that  $p^{-1}$ is defined in codimension 1 and $X$ is locally factorial at points in the indeterminacy locus of $p^{-1}$, then it is not hard to show (using an argument similar to that in Proposition~\ref{prop-N-L}) that $g(R)$ is contained in a twisted homogeneous coordinate ring $B(X, \sL, \sigma)$, where $\sL$ is an invertible sheaf on $X$.  We do not know, however, if $\sL$ must be $\sigma$-ample.  

We   note that Proposition~\ref{prop-II} applies to the situation in Theorem~\ref{thm-main} that we are most interested in.

\begin{lemma}\label{lem-birational}
Adopt Notation~\ref{not1}, and assume that the hypotheses of Theorem~\ref{thm-main} hold.  Then the morphism $p$ is birational.  More precisely, let $\eta$ be the generic point
of  $X$, with function field
$K = \kk(\eta)$.  Then $p^{-1}$ is defined  at $\eta$.  Thus conclusions $(1)$--$(3)$ of Proposition~\ref{prop-II} hold for $R$.
\end{lemma}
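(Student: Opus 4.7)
The plan is to exploit the interplay between the countability hypothesis (iv) and the uncountability of the base field $\kk$, together with the dimension hypothesis in (ii). The key observation is that the indeterminacy locus of $p^{-1}$ is a closed subset of $X$ (this follows from the fiber-square definition in \eqref{indeterminacy-diagram}, since the locus where the left-hand vertical map is an isomorphism is open).

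First, I would argue that the countably many points making up the indeterminacy locus must all be \emph{closed} points. If any non-closed point $x$ lay in the indeterminacy locus, then by closedness its Zariski closure $\overline{\{x\}}$ would also be contained in the indeterminacy locus. But $\overline{\{x\}}$ would be a positive-dimensional subvariety of $X$, and since $\kk$ is uncountable algebraically closed, any positive-dimensional variety over $\kk$ has uncountably many closed points. This contradicts hypothesis (iv), so all the points in the indeterminacy locus are closed.

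Next, since $X$ is a variety of dimension $\geq 2$ by (ii), its generic point $\eta$ is not a closed point. Thus $\eta$ does not lie in the indeterminacy locus, i.e., $p^{-1}$ is defined at $\eta$. This is precisely the condition that $p$ is an isomorphism in codimension $0$ in the sense defined before Proposition~\ref{prop-II}.

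Finally, I would invoke Proposition~\ref{prop-II}: hypothesis (i) of Theorem~\ref{thm-main} provides the corepresenting projective variety $X$ (with $X$ a variety by (ii), so the proposition applies), and having just verified isomorphism in codimension $0$, conclusions (1)--(3) of that proposition all follow. There is no serious obstacle here: the work has already been done in Proposition~\ref{prop-II}, and the role of this lemma is simply to verify the codimension-$0$ hypothesis from the geometric data of Theorem~\ref{thm-main}.
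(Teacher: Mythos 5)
Your ``key observation''---that the indeterminacy locus of $p^{-1}$ is closed in $X$---is false, and in fact contradicts the paper's own conclusions. Openness of the locus where the left-hand map in \eqref{indeterminacy-diagram} is an isomorphism is a finite-type phenomenon: it holds for a proper morphism of noetherian schemes, but $p\colon Y_\infty \to X$ is an inverse limit of such morphisms, so its isomorphism locus is only a countable intersection of open sets. Concretely, for a \nba\ the indeterminacy locus of $p^{-1}$ is $\bigcup_{n\geq 0}\sigma^{-n}(\Supp P)$ (Lemma~\ref{lem:indeterminacy of r}, Proposition~\ref{prop:existence of q}), a countably infinite set of closed points whose orbit is critically dense; and Theorem~\ref{thm-final}(b) asserts that in the setting of this lemma the indeterminacy locus is always critically dense. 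A countable dense subset of a variety of dimension $\geq 2$ is not closed. Consequently your step excluding non-closed points from the indeterminacy locus (by taking closures inside it) collapses, and with it your deduction that $\eta$ is not an indeterminacy point.

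The repair is the paper's route: use Proposition~\ref{prop-surj-factor} to factor $p$ through $\Phi_{n_0}$, obtaining morphisms $f_n\colon Y'_n \to X$ for $n \geq n_0$. For these proper morphisms of finite-type schemes the indeterminacy locus of $f_n^{-1}$ \emph{is} closed, and it is contained in $\Omega$: if $p^{-1}$ is defined at $x$, then $\Spec \sO_{X,x}\times_X Y_\infty \to \Spec \sO_{X,x}\times_X Y'_n$ is scheme-theoretically surjective and Lemma~\ref{lem-surj-equal}(2) forces $f_n^{-1}$ to be defined at $x$. A closed subset of the variety $X$ contained in a countable set of points must be finite (any positive-dimensional component would have uncountably many closed points, as $\kk$ is uncountable), hence consists of closed points, so $f_n^{-1}$ is defined at the non-closed point $\eta$ for every $n$; passing to the inverse limit over $n$ then shows $p^{-1}$ is defined at $\eta$. (This finite-level finiteness is also what gets reused later, e.g.\ in Notation~\ref{not3} and Theorem~\ref{thm-VIII}, so it is not a dispensable detour.) Your final paragraph, invoking Proposition~\ref{prop-II} once the codimension-$0$ condition is established, is fine.
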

\begin{proof}
Adopt Notation~\ref{not1prime}.  
By Proposition~\ref{prop-surj-factor}, $p: Y_\infty \to X$ factors through $\Phi_{n_0}$ for some $n_0$.  Let $f_{n_0}: Y'_{n_0} \to X$ be the induced map.  For $n \geq n_0$, let 
$
f_n := f_{n_0} \phi^{n-n_0}:  Y'_n \to X.
$
Let $\Omega$ be the  indeterminacy locus of $p^{-1}$; by assumption, $\Omega$ consists of countably many
$\kk$-points.  
If $n \geq n_0$, then the indeterminacy locus of  $f_n^{-1}$ is a closed subset of $X$ that is contained in $\Omega$:  it is thus finite.    Thus $f_n^{-1}$ is defined at $\eta$; taking the limit, we see
that $p^{-1}$ is defined at $\eta$.  
\end{proof}

We now suppose that the coarse moduli space $X$ exists and in addition that $p: Y_\infty \to X$ is an isomorphism in codimension 1.  This allows us to construct a sequence of reflexive sheaves on $X$ whose properties we now study.  

\begin{notation}\label{not3}
Adopt Notation~\ref{not1prime}, and suppose that there is a projective variety $X$ that corepresents $\H$.  Suppose in addition that $p$ is an isomorphism in codimension 1. 

Using Proposition~\ref{prop-surj-factor}, let $n_0$ be such that $p$ factors through $\Phi_n$ for $n \geq n_0$, and let $f_n:  Y'_n \to X$ be the induced map.  
As in the proof of Proposition~\ref{prop-II}(3),  $(f_n)^{-1}$ is defined in codimension 1 for $n \geq n_0$.

For $n \geq n_0$, let $W_n$ be the indeterminacy locus of $(f_n)^{-1}:  X \dra Y'_n$.  By assumption, $W_n$ has codimension 2 in $X$.  Let $U_n:=X \smallsetminus W_n$ and let $i_n: U_n \to X$ be
the inclusion morphism. 

 If $n \geq n_0$, let $\sR'_n := (f_n)_* \sM'_n$ and let $\sN_n := (i_n)_*
(i_n)^* \sR'_n$.  
If $1 \leq n < n_0$, let $\sR'_n := (f_{n_0})_* (\phi^{n_0-n})^* \sM'_n$ 
and let
$\sN_n := (i_{n_0})_* (i_{n_0})^* \sR'_n$.  

Let $K := \kk(X)$.  Let $\sigma \in \Aut(X)$ and $g:  R \to K[t; \sigma]$ be given by Proposition~\ref{prop-II}.
Let  $\sL:=(\sN_{n_0+1}\otimes (\sN_{n_0}^\vee)^\sigma)^{\vee\vee}$.  
 For $n \geq 0$, let
\[ \sL_n := (\sL \otimes \sL^{\sigma} \otimes \cdots \otimes
\sL^{\sigma^{n-1}})^{\vee\vee}.\]
Let $\sR'_0 := \sN'_0 := \sO_X$.
\end{notation}

We collect some basic properties of the sheaves $\sN_n$, $\sL_n$, and $\sR'_n$.

\begin{proposition}\label{prop-N-L}
Adopt Notation~\ref{not3}.  In particular, suppose that there is a projective variety $X$ that corepresents $\H$ and  that $p$ is an isomorphism in codimension 1.
\begin{enumerate}
 \item The sheaves $\sN_n$ are reflexive for $n \geq 0$, and  $\sN_n = (\sR'_n)^{\vee\vee}$.
\item For $n \geq 0$ the  the natural map $\sR'_n \to
\sN_n$ is an isomorphism in codimension 1.
\item There are ``multiplication'' maps
\[  \sR'_n \otimes (\sR'_m)^{\sigma^n}\to  \sR'_{n+m}\]
for all $n \geq 0$ and $m \geq n_0$, satisfying the obvious associativity conditions.  In particular, $\sO_X \oplus \bigoplus_{n \geq n_0} \sR_n$ is a bimodule algebra. (See Section~\ref{NBEXAMPLE}.)
\item  For any $k \geq n_0$, we have $\sL\cong(\sN_{k+1}\otimes (\sN_{k}^\vee)^\sigma)^{\vee\vee}$.  
\item In fact, for any  $n \geq 0$, we have  $\sN_n \cong \sL_n$.
\end{enumerate}
\end{proposition}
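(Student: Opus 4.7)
The plan is to work throughout on a suitable big open $V \subseteq X$ (complement of codimension $\geq 2$) on which $p^{-1}$ is defined, giving a section $\tilde{s}: V \to Y_\infty$ (by Lemma~\ref{lem-birational}) and hence compatible sections $s_n := \Phi_n \tilde{s}: V \to Y'_n$ for every $n \geq 0$ (agreeing with $(f_n)^{-1}$ where the latter is defined). On $V$ every $\sR'_n$ restricts to an invertible sheaf, so the statements of (1)--(5) will be verified by checking the corresponding identities of line bundles on $V$ and extending them via reflexivity.

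For (1) and (2), I would observe that $\sN_n$ is by construction the pushforward of a line bundle from a big open of $X$. Using that $X$ is $S_2$, such a pushforward is reflexive and equals $(\sR'_n)^{\vee\vee}$ by the standard characterization of the reflexive hull as the unique reflexive extension from a big open. The natural unit map $\sR'_n \to \sN_n$ is tautologically an isomorphism on the relevant big open, hence in codimension 1. For (3), I would apply $(f_{n+m})_*$ to the isomorphism $\sM'_{n+m} \cong (\phi^m)^*\sM'_n \otimes (\psi^n)^*\sM'_m$ of \eqref{mult}, then use the projection formula and the base change furnished by the commutative squares $f_n \circ \phi^m = f_{n+m}$ and $f_m \circ \psi^n = \sigma^n \circ f_{n+m}$ (the latter from \eqref{marchmont}, which is why $m \geq n_0$ is required). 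This identifies the pushforwards of the two tensor factors with $\sR'_n$ and $(\sR'_m)^{\sigma^n}$ respectively, yielding the multiplication map; associativity is inherited from \eqref{mult}.

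For (4), applying (3) with $n=1$ and $m = k \geq n_0$ gives a multiplication $\sR'_1 \otimes (\sR'_k)^\sigma \to \sR'_{k+1}$ which restricts to an isomorphism of line bundles on $V$ (being the pullback of the isomorphism \eqref{mult}). Rearranging and taking reflexive hulls yields $(\sN_{k+1} \otimes (\sN_k^\vee)^\sigma)^{\vee\vee} \cong \sN_1$; specializing $k = n_0$ to the definition of $\sL$ identifies the common value as $\sL \cong \sN_1$. For (5), I would compute $\sN_n|_V$ directly using the iterated form of \eqref{mult}, namely $\sM'_n \cong \bigotimes_{k=0}^{n-1}(\psi^k)^*(\phi^{n-1-k})^*\sM'_1$ on $Y'_n$. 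Pulling back via $s_n$ and applying the key commutation
\[
\psi^k s_n = s_{n-k}\sigma^k
\]
on $V$ (after shrinking to a $\sigma$-stable subset), which follows from $p\Psi = \sigma p$ of Proposition~\ref{prop-II}(1) together with the uniqueness of the lift $\tilde{s} = p^{-1}$, each tensor factor becomes $(\sigma^k)^*(s_1^*\sM'_1) = (\sN_1|_V)^{\sigma^k}$. Hence $\sN_n|_V \cong \bigotimes_{k=0}^{n-1}\sN_1^{\sigma^k}|_V = \sL_n|_V$ using (4), and reflexivity of both sides extends this to the global isomorphism $\sN_n \cong \sL_n$.

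The main obstacle I anticipate is the clean derivation and application of the commutation $\psi^k s_n = s_{n-k}\sigma^k$ in part (5). This identity is what permits all indices $n \geq 0$, including the range $n < n_0$ where no direct morphism $f_n: Y'_n \to X$ exists, to be treated uniformly: the sections $s_n$ for $n < n_0$ must be obtained as $\Phi_n \tilde{s}$ from the lift to $Y_\infty$, and the commutation has to be tracked through $\Phi_n \Psi = \psi \Phi_{n+1}$ combined with $\Psi \tilde{s} = \tilde{s} \sigma$ on $V$. A secondary subtlety is ensuring $V$ can be chosen $\sigma$-stable through the relevant range of shifts; this is possible because each $\sigma^k V$ remains a big open and only finitely many shifts are needed for any fixed $n$.
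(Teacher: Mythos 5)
Your plan stands or falls on the very first step, and that step fails: in general there is \emph{no} nonempty open subset $V\subseteq X$ (big or otherwise) on which $p^{-1}$ is defined, so the section $\tilde s\colon V\to Y_\infty$ on which you build everything does not exist. Lemma~\ref{lem-birational} and the hypothesis of Notation~\ref{not3} only give definedness of $p^{-1}$ at the generic point and at codimension-one points, i.e.\ at certain local rings; the indeterminacy locus of $p^{-1}$ is the countable union $\bigcup_{n\geq n_0} W_n$, which is not closed and is typically Zariski dense. Indeed, in the motivating examples this locus is $\bigcup_{n\geq 0}\sigma^{-n}(\Supp P)$ for a \nba\ (Lemma~\ref{lem:indeterminacy of r}, Proposition~\ref{prop:existence of q}(iv)), which is critically dense; every nonempty open subset of $X$ then meets it, and a morphism $V\to Y_\infty$ splitting $p$ would require inverting all of the $f_n$ over $V$ simultaneously. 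So the obstacle is not the $\sigma$-stability of $V$ that you flag, but the existence of $V$ itself.

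The gap is repairable, because for each fixed $n$ only finitely many levels and finitely many $\sigma$-translates are involved: replace $\tilde s$ by the sections $s_j:=f_j^{-1}$ defined on the big opens $U_j=X\ssm W_j$ (and, for $j<n_0$, by $\phi^{n_0-j}\circ s_{n_0}$), working over finite intersections such as $U_n\cap\sigma^{-1}(U_{n-1})\cap\cdots$, which are still big opens. Your key commutation $\psi^k s_n=s_{n-k}\sigma^k$ then follows directly from \eqref{marchmont} together with the fact that $f_{n-k}$ is a local isomorphism over the relevant open, rather than from uniqueness of lifts to $Y_\infty$. With this change your parts (1)--(3) essentially coincide with the paper's argument (one caveat in (3): $(f_{n+m})_*(\phi^m)^*\sM'_n$ is not \emph{identified} with $\sR'_n$; there is only the natural map $\sR'_n\to(f_{n+m})_*(\phi^m)^*\sM'_n$ coming from the unit of adjunction, which is, however, exactly the direction needed to define multiplication), while your (4) and (5) give a workable alternative to the paper's route, which instead shows the reflexive-hull multiplication maps $(\sN_n\otimes\sN_m^{\sigma^n})^{\vee\vee}\to\sN_{n+m}$ are isomorphisms and telescopes. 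Finally, note that the principle you invoke repeatedly --- that a reflexive sheaf equals the pushforward of its restriction to a big open, so identities on big opens globalize --- requires $X$ to be suitably $S_2$/normal at the points of $W_n$; this is not part of Notation~\ref{not3} (the paper's own proof uses it tacitly), and in the application it is supplied by hypothesis (ii) of Theorem~\ref{thm-main}.
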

\begin{proof}
$(1)$, $(2)$.  Let $n \geq n_0$ and let $i:= i_n$.  Away from the codimension 2
set $W_n$, the sheaf $\sR'_n $ is invertible, as $f_n$ is a
local
isomorphism. 
  The kernel and
cokernel of $\sR'_n \to \sN_n$ are supported on $W_n$.  This proves $(1)$ and
$(2)$ for $n \geq n_0$; the proof for $n < n_0$ is similar, using $W_{n_0}$.

$(3)$.  Let $m, n \geq n_0$.  The natural morphisms
\[ \sM'_n \to (\phi^m)_* (\phi^m)^* \sM'_n\]
and
\[ \sM'_m \to (\psi^n)_* (\psi^n)^* \sM'_m\]
induce maps
\[ \sR'_n \to (f_n)_*(\phi^m)_* (\phi^m)^* \sM'_n = (f_{n+m})_* (\phi^m)^*
\sM'_n\]
and
\[  (\sigma^n)^* \sR'_m  \to (\sigma^n)^* (f_m)_* (\psi^n)_* (\psi^n)^* \sM'_m =
(\sigma^n)^* (\sigma^n)_* (f_{m+n})_* (\psi^n)^* \sM'_m = (f_{m+n})_* (\psi^n)^*
\sM'_m.\]
If $0 \leq n< n_0$ and $m \geq n_0$, there is a map
\[ \sR'_n = (f_{n_0})_* (\phi^{n_0-n})^* \sM_n' \to (f_{n_0})_* (\phi^{m-n_0+n})_* (\phi^{m-n_0+n})^* (\phi^{n_0-n})^* \sM_n' = (f_{n+m})_* (\phi^m)^* \sM'_n.\]
For all $n \geq 0$ and $m \geq n_0$, the isomorphism 
\beq\label{mult-M}
(\phi^m)^*\sM'_n \otimes (\psi^n)^* \sM'_m \to \sM'_{n+m}
\eeq
observed in Section~\ref{PPR}
thus induces a multiplication map
\[
\sR'_n \otimes (\sR'_m)^{\sigma^n} \to
(f_{n+m})_* (\phi^m)^* \sM'_n \otimes (f_{n+m})_* (\psi^n)^* \sM'_m \to
(f_{n+m})_* \sM'_{n+m} = \sR'_{n+m}.
\]

$(4)$.  
If $k > n_0$, then from \eqref{mult-M} we have
\[
\phi^*(\sM'_k\otimes (\psi^* \sM'_{k-1})^{-1}) \cong \sO(1, 0^k)|_{Y'_{k+1}} \cong
\sM'_{k+1}\otimes (\psi^* \sM'_k)^{-1}.
\]
Working on the open set where $f_{k+1}$ is an isomorphism, we see that the
isomorphism class of $\sL$ does not depend on the integer $k \geq n_0$ used to
define it.

$(5)$.  The multiplication  maps induce morphisms $\sN_n \otimes \sN_m^{\sigma^n} \to
\sN_{n+m}$ 
for all $n \geq 0$, $m \geq n_0$.  Since $f_{n+m}$ is an isomorphism away from a codimension 2 locus in $X$ and the $\sN_k$ are reflexive,  the induced maps 
$(\sN_n \otimes \sN_m^{\sigma^n})^{\vee\vee}  = (i_{n+m})_*i_{n+m}^*(\sN_n \otimes \sN_m^{\sigma^n}) \to
\sN_{n+m}$ 
must in fact be isomorphisms.
Further, these maps are associative, since the corresponding property holds for
\eqref{mult-M}.

 Let $k\geq n_0$ and $n \geq 0$; then we have
\begin{multline*}
\sN_n \cong \big(\sN_{n+k} \otimes (\sN_{k}^{\vee})^{\sigma^n}\big)^{\vee\vee} \cong \\
\big(\sN_{n+k} \otimes (\sN_{n+k-1}^{\vee})^{\sigma} \otimes \sN_{n+k-1}^{\sigma}
\otimes (\sN_{n+k-2}^{\vee})^{\sigma^2} \otimes \cdots \otimes
\sN_{k+1}^{\sigma^{n-1}}\otimes(\sN_{k}^{\vee})^{\sigma^n}\big)^{\vee\vee} \cong \big(\sL \otimes
\sL^{\sigma} \otimes \cdots \otimes \sL^{\sigma^{n-1}}\big)^{\vee\vee} = \sL_n.
\end{multline*} 
\end{proof}

The bimodule $\bigoplus_{n \in \NN} (\sL_n)_{\sigma^n}$ is also a bimodule algebra.  
The global sections of a bimodule algebra have a natural algebra structure, and one can show that the natural map $R \to \bigoplus H^0(X, \sN_n) \cong \bigoplus H^0(X, \sL_n)$ is in fact an algebra homomorphism. However, it is easier to   work instead with the map $g:  R \to K[t; \sigma]$ defined in Notation~\ref{not3} and given by Proposition~\ref{prop-II}, and we do so.

\section{Points and  curves in  $Y_\infty$}\label{POINTS} 
To prove Theorem~\ref{thm-main}, we must understand the  geometry of  point spaces at a finer level, and in particular, study the structure of the countable subset of $X$ consisting of indeterminacy points of $p^{-1}$.  We next focus on curves.  
Note the next result holds for any connected graded noetherian $R$ generated in degree 1, without further assumptions on the structure of $Y_\infty$ or on the cardinality of $\kk$.

\begin{proposition}\label{prop-VII}
Let $\kk$ be an algebraically closed field and let $R$ be a connected graded noetherian $\kk$-algebra generated in degree 1.  Adopt Notation~\ref{not1}.  Let $C$ be an irreducible projective curve in $Y_{n}$; suppose that $\Phi^{-1}_n$ is
defined at the generic point of $C$.  Then
$C$ contains only finitely many  points of indeterminacy of $\Phi_{n}^{-1}$. 
\end{proposition}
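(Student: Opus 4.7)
The plan is to exploit noetherianity of the point scheme $Y_n$ (automatic, since $Y_n$ is a closed subscheme of $\PP(R_1^*)^{\times n}$) in order to realize the indeterminacy locus of $\Phi_n^{-1}$ as a closed subscheme of $Y_n$, and then intersect with $C$.

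Concretely, for each $m \geq n$ I will introduce the locus $B_m \subseteq Y_n$ of points $x$ at which the proper morphism $\phi^{m-n}\vert_{Y'_m}\colon Y'_m \to Y_n$ fails to be an isomorphism in any open neighborhood of $x$, where $Y'_m = \Phi_m(Y_\infty)$ is the scheme-theoretic image. I will verify that $B_m$ is closed by combining the upper-semicontinuous closedness of the multi-preimage locus for the proper map with the closedness (under properness) of the image of the non-local-iso locus in $Y'_m$; equivalently, $B_m$ is the complement of the open locus where $\phi^{m-n}\vert_{Y'_m}$ is an isomorphism in a neighborhood. Unpacking the fiber-square definition of $\Phi_n^{-1}$ being defined (cf.~the analogue for $\Phi_n$ of Diagram~\eqref{indeterminacy-diagram}) together with $Y_\infty = \varprojlim Y'_m$, one sees that the indeterminacy locus of $\Phi_n^{-1}$ coincides with $\bigcup_{m \geq n} B_m$. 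Moreover, failure of $\phi^{m-n}\vert_{Y'_m}$ over $x$ forces the factorization $\phi^{m+1-n}\vert_{Y'_{m+1}} = \phi^{m-n}\vert_{Y'_m} \circ \phi_{m+1}\vert_{Y'_{m+1}}$ to fail as well (since scheme-theoretic surjectivity and properness of $\phi_{m+1}\vert_{Y'_{m+1}}$ imply that any neighborhood on which the composite were an isomorphism would force the first factor to be one too), so the $B_m$ form an ascending chain $B_n \subseteq B_{n+1} \subseteq \cdots$.

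By noetherianity of $Y_n$, this ascending chain of closed subsets stabilizes: $B_m = B_M$ for all $m \geq M$, and hence the indeterminacy locus $\bigcup_m B_m = B_M$ is itself closed in $Y_n$. Since $\Phi_n^{-1}$ is defined at the generic point $\eta$ of $C$ by hypothesis, we have $\eta \notin B_M$, so $B_M \cap C$ is a proper closed subscheme of the irreducible one-dimensional scheme $C$; being zero-dimensional and closed in a noetherian scheme, it consists of finitely many closed points, giving the conclusion. The most delicate step will be the closedness of $B_m$: verifying carefully the standard openness of the ``isomorphism above $x$'' locus for a proper morphism of finite-type schemes, and dealing with the case analysis that the two possible modes of failure (multiple preimages, or failure of local isomorphism at the unique preimage) each propagate up the tower. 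Notably, this argument never invokes Proposition~\ref{prop-RS}, and uses only noetherianity of $Y_n$ rather than of $R$ itself.
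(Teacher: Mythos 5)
Your argument breaks at the decisive step. You claim that, because $Y_n$ is noetherian, the ascending chain of closed subsets $B_n \subseteq B_{n+1} \subseteq \cdots$ must stabilize. But a noetherian topological space satisfies the \emph{descending} chain condition on closed subsets (equivalently, ACC on opens), not the ascending one. Ascending chains of closed subsets in a noetherian scheme can fail to stabilize—take finite subsets of increasing cardinality in $\PP^1$. This is not a correctable technicality: in the motivating example where $R$ is a na\"ive blowup $R(X,\sL,\sigma,P)$, the indeterminacy locus of $p^{-1}$ (and hence of $\Phi_n^{-1}$ along $X_n$) is a countably infinite, dense set of closed points—the $\sigma^{-k}$-translates of $\Supp P$—so it is emphatically \emph{not} a closed subset, and your $B_m$'s form a strictly increasing chain. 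Any strategy premised on showing the indeterminacy locus of $\Phi_n^{-1}$ is closed will therefore fail.

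Your closing observation that the argument ``uses only noetherianity of $Y_n$ rather than of $R$ itself'' should have been a warning sign rather than a feature: noetherianity of $Y_n$ is automatic (it is a closed subscheme of $\PP(R_1^*)^{\times n}$), so if the proof truly used only that, the hypothesis that $R$ is noetherian would be superfluous—and it is not. The paper's proof uses noetherianity of $R$ essentially, and in a quite different way. It normalizes $C$ to $\Ct$, lifts $\Ct \to Y_n$ through the tower (possible because $\Phi_n^{-1}$ is defined at the generic point, then extend over the normal curve $\Ct$), and argues the lifts stabilize by finiteness of integral closure to give a curve $\hat C \to Y_\infty$. Over an open affine $U = \Spec A \subseteq \hat C$, the key point is that $R_A = R\otimes_\kk A$ is noetherian—this is where noetherianity of $R$ enters, since $A$ is finite over $\kk[x]$ by Noether normalization and $R[x]$ is noetherian by the Hilbert basis theorem. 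Hence the defining ideal of the universal point module over $U$ is generated in bounded degree $\leq N$; by flatness this bound passes to every fiber, so $\Phi_N$ is a local isomorphism at every point of $\hat C$, and indeterminacy of $\Phi_n^{-1}$ along $C$ is confined to the finite set cut out by $(\phi^{n-N})^{-1}$. The finiteness comes from a uniform degree bound on ideals over the curve, not from any closedness of the indeterminacy locus.
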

\begin{proof}
Let $\Ct$ be the normalization of $C$.  Then the map $\Ct \to C \to  Y_{n}$ lifts via
the morphisms $\phi^m$ to maps $\Ct \to Y_{n+m}$ for all $m \geq 0$. 
These maps  stabilize for $m \gg 0$, by finiteness of the integral closure, to induce  morphisms
\[ \xymatrix{
\Ct \ar[r]_j & \hat{C} \ar[r]_i & Y_{\infty}
}\]
for some projective curve $\hat{C}$, 
where $j$ is birational and $i$ is a closed immersion.

Let $U = \Spec A$ be an open affine subset of $\hat{C}$.  Let $M$ be the $A$-point
module associated to 
\[ U \to \hat{C} \to Y_{\infty}.\]
 Since $A$ is an affine commutative $\kk$-algebra, the ring $R_A$ is noetherian.   Thus the right ideal $I$ given by 
\[ 0 \to I \to R_A \to M \to 0 \]
is finitely generated, in degrees $\leq N$.  If $x$ is any $\kk$-point of $U$,
the associated $\kk$-point module is $M_x:= M \otimes_A \kk_x$.  Since $M$ is
$A$-flat, the sequence
\[ 0 \to I\otimes_A \kk_x \to R \to M_x \to 0\]
remains exact.  In particular, the right ideal defining $M_x$ is generated in
degrees $\leq N$.   This precisely says that for $m \geq N$, the map $\Phi_m:
Y_{\infty} \to Y_m$ is a local isomorphism at $x$, and since $x \in U$ was arbitrary, at all points of $U$.  

Covering
$\hat{C}$ by finitely many open affines and increasing $N$ if necessary, we see  that $\Phi_N$ is a local isomorphism at all points of $\hat{C}$. 
Thus the only components of the indeterminacy locus of $\Phi_{n}^{-1}$ that
$C$ may possibly meet are those in the indeterminacy locus of $(\phi^{n-N})^{-1}: Y_N
\dra Y_{n}$.  This is a proper closed subscheme of $C$ and thus is finite.  
\end{proof}

In the next few results, we consider the following slightly weaker version of the hypotheses of Theorem~\ref{thm-main}:
\begin{hypotheses}\label{hyp-FOO}
 Let $\kk$ be an uncountable algebraically closed field, and let $R $ be a  noetherian connected graded $\kk$-algebra generated in degree 1. Adopt Notation~\ref{not1}.
Suppose the following:
\begin{itemize}
 \item[(i)]
there is a commutative
diagram
\[ \xymatrix@C=5pt{
& F \cong  Y_{\infty} \ar[ld]_{\pi} \ar[rd]^{p} \\
\H \ar[rr] && X }\]
where $X$ is a a projective scheme that corepresents $\H$ through the morphism $H \longrightarrow X$;
\item[(ii)]  $X$ is a variety of dimension $\geq 2$;
\item[(iii)] the map  $\H \to X$ is bijective on $\kk$-points; 
\item[(iv)]  the indeterminacy locus of $p^{-1}$ 
consists  (set-theoretically) of
countably many points.
\end{itemize}
\end{hypotheses}
In particular, we alert the reader that from here on, we will assume that $\kk$ is uncountable.   
\begin{corollary}\label{cor-curves}
 If  Hypotheses~\ref{hyp-FOO} hold, then any curve in $X$  contains only  finitely many points of
indeterminacy of $p^{-1}$.  \qed 
\end{corollary}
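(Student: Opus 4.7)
The plan is to bootstrap Proposition~\ref{prop-VII} from the finite-level point schemes $Y_n$ up to the statement about $X$, using Proposition~\ref{prop-surj-factor} to factor $p$ through some $Y'_n$ and using uncountability of $\kk$ to ensure a given curve is not swallowed by the countable indeterminacy set.

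First I would reduce to the case of an irreducible (reduced) curve $C \subset X$, since any curve has finitely many components. Let $W \subset X$ denote the indeterminacy locus of $p^{-1}$, so by Hypothesis~(iv) the set $W$ is at most countable. Since $\kk$ is uncountable, the irreducible curve $C$ (which contains uncountably many $\kk$-points) is not contained in $W$. In particular, $p^{-1}$ is defined at the generic point $\eta_C$ of $C$, yielding a morphism $\Spec\kk(C) \to Y_\infty$ lying over $\eta_C$.

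Next I would invoke Proposition~\ref{prop-surj-factor}: for $n$ sufficiently large, $p$ factors as $p = f_n \circ \Phi_n$ for a morphism $f_n : Y'_n \to X$. Writing $W_n \subset X$ for the indeterminacy locus of $f_n^{-1}$, the factorization forces $W_n \subseteq W$ (any section $\Spec\sO_{X,x} \to Y_\infty$ of $p$ composes with $\Phi_n$ to a section of $f_n$). Hence $W_n \cap C$ is a proper closed subset of $C$, therefore finite. Let $U := X \smallsetminus W_n$; then $f_n$ restricts to an isomorphism $f_n^{-1}(U) \to U$, and taking closures of $f_n^{-1}(C \cap U)$ in $Y'_n$ produces the strict transform $\widetilde{C}_n \subset Y'_n \subseteq Y_n$, a birational model of $C$. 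By construction the $\Spec\kk(C)$-point constructed above lifts to the generic point of $\widetilde{C}_n$, so $\Phi_n^{-1}$ is defined at the generic point of $\widetilde{C}_n$.

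Now I would apply Proposition~\ref{prop-VII} to the curve $\widetilde{C}_n \subset Y_n$: only finitely many points $q_1,\dots,q_r$ of $\widetilde{C}_n$ lie in the indeterminacy locus of $\Phi_n^{-1}$. For any point
\[
x \in C \smallsetminus \bigl(\, (W_n \cap C) \cup \{f_n(q_1),\dots,f_n(q_r)\} \,\bigr),
\]
the map $f_n^{-1}$ is defined at $x$ with image $\widetilde{x} := f_n^{-1}(x) \in \widetilde{C}_n$, and $\widetilde{x} \notin \{q_1,\dots,q_r\}$, so $\Phi_n^{-1}$ is defined at $\widetilde{x}$. Composing gives a section $\Spec\sO_{X,x} \to Y_\infty$ of $p$, so $x \notin W$. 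Therefore $W \cap C$ is contained in the finite set $(W_n \cap C) \cup \{f_n(q_1),\dots,f_n(q_r)\}$, proving the claim.

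There is no serious obstacle: the main subtlety is simply verifying that the strict transform $\widetilde{C}_n$ in $Y'_n$ really does receive the $\Spec\kk(C)$-point coming from $p^{-1}$ at $\eta_C$, which is what allows Proposition~\ref{prop-VII} to apply. The uncountability of $\kk$ is used exactly once, to guarantee that an irreducible curve cannot be contained in the countable set $W$ (equivalently in $W_n$).
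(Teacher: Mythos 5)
Your argument is correct and is essentially the proof the paper intends (the corollary is stated as immediate from Proposition~\ref{prop-VII}): factor $p$ through $Y'_n$ via Proposition~\ref{prop-surj-factor}, note that the indeterminacy locus of $f_n^{-1}$ is closed and contained in the countable set $W$, hence meets $C$ in a finite set, lift $C$ to its strict transform, and apply Proposition~\ref{prop-VII} before composing local inverses. The only looseness is that exhibiting a section is weaker than the paper's fiber-square definition of $p^{-1}$ (or $f_n^{-1}$, $\Phi_n^{-1}$) being defined at a point; both places where you argue via sections are repaired exactly as in the paper's Proposition~\ref{prop-II}(3) and Lemma~\ref{lem-birational}, using scheme-theoretic surjectivity of $\Phi_n$ onto $Y'_n$ together with Lemma~\ref{lem-surj-equal}(\ref{scholium}).
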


We next study preimages of $\kk$-points in $Y_\infty$.  We write a (not necessarily closed) point of $Y_\infty$ as $y = (y_n)$, with $y_n = \Phi_n(y) \in Y'_n$.

\begin{proposition}\label{prop-contract}
 Adopt Notation~\ref{not1}, and assume that  Hypotheses~\ref{hyp-FOO} hold.  
Let $x $ be a $\kk$-point of $X$, and let $y, z$ be (not necessarily closed)
points of  $p^{-1} (x) \subseteq Y_{\infty}$.   Then $\Psi^k (y) = \Psi^k (z)$ for  $k\gg 0$.  In particular, $\Psi^k (z)$ is a $\kk$-point of
$Y_{\infty}$ for $k \gg 0$.
\end{proposition}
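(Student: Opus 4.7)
The plan is to reduce to the case where $y$ and $z$ are $\kk$-points of $Y_\infty$, where the conclusion is essentially immediate from hypothesis~(iii). Indeed, if $y, z \in Y_\infty(\kk)$ both lie over $x \in X(\kk)$, then $\pi(y), \pi(z) \in \H(\kk)$ both lie over $x$, so the bijection in~(iii) forces $\pi(y) = \pi(z)$, i.e., $y \sim z$. Unwinding the definition of $\sim$ gives $y[m]_{\geq 0} \cong z[m]_{\geq 0}$ as $R$-modules for $m \gg 0$, hence $\Psi^k(y) = \Psi^k(z)$ as $\kk$-points of $Y_\infty$ for $k \gg 0$; the ``in particular'' clause is automatic in this case.

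To reduce the general case, I will show that for any $y, z \in p^{-1}(x)$, both $\Psi^k(y)$ and $\Psi^k(z)$ become $\kk$-points of $Y_\infty$ for $k \gg 0$. Consider $T := \overline{\{y\}} \cup \overline{\{z\}} \subseteq Y_\infty$. By continuity $p(T) \subseteq \{x\}$, and using $p\Psi = \sigma p$ one gets $\Psi^k(T) \subseteq p^{-1}(\sigma^k(x))$ for all $k$. Applying Proposition~\ref{prop-RS} to the $\phi$-stable finite set $\{y_n, z_n\} \subseteq Y'_n$ shows that $\Phi_n$ is a local isomorphism at $y$ and at $z$ for $n \gg 0$, so $\dim T$ is a well-defined nonnegative integer equal to $\dim(\overline{\{y_n\}} \cup \overline{\{z_n\}})$ for $n \gg 0$. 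Since $\Psi$ is a morphism, $\dim \Psi^k(T)$ is non-increasing in $k$ and therefore stabilizes to some $d_\infty \geq 0$. If $d_\infty = 0$, then for $k \gg 0$ both $\Psi^k(y)$ and $\Psi^k(z)$ are $\kk$-points of $p^{-1}(\sigma^k(x))$, and the first paragraph applied at $\sigma^k(x)$ closes the argument.

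The main obstacle, which carries the bulk of the content, is to prove $d_\infty = 0$. If instead $d_\infty \geq 1$, then for all $k \gg 0$ the closed subspace $\Psi^k(T)$ is positive-dimensional and, as $\kk$ is uncountable, contains uncountably many $\kk$-points, each mapping to $\sigma^k(x)$ under $p$. The first paragraph tells us that any two such $\kk$-points $w, w'$ satisfy $\Psi^{l(w,w')}(w) = \Psi^{l(w,w')}(w')$ for some $l$ depending on the pair; the challenge is to upgrade this pointwise coincidence to a uniform $l$ on all of $\Psi^k(T)$. My plan is to exploit that $\Psi^k(T)$ carries a family of point modules whose closed fibers are pairwise $\sim$-equivalent, and then use finite generation of $R$ together with a noetherian bound on the degree at which family-wise tails stabilize to produce a single $l$ for which $\Psi^l$ is constant on a Zariski-dense set of $\kk$-points of $\Psi^k(T)$, hence constant on $\Psi^k(T)$, contradicting $\dim \Psi^{k+l}(T) = d_\infty \geq 1$. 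An alternative contradiction uses hypothesis~(iv) together with Corollary~\ref{cor-curves}: choose a generic curve $C \subseteq X$ through $\sigma^k(x)$ meeting the countable indeterminacy locus $\Omega$ of $p^{-1}$ in only finitely many points, lift it to a curve in $Y_\infty$ via $p^{-1}$ off $\Omega$, and use the positive-dimensional fiber $\Psi^k(T) \cap p^{-1}(\sigma^k(x))$ to violate either the local-isomorphism property of $\Phi_n$ at a generic point of the lifted curve, or the countability of $\Omega$ along the $\sigma$-orbit of $x$.
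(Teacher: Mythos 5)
The first paragraph (both $y,z$ are $\kk$-points) is correct and matches the paper exactly. The reduction to the case that at least one of $y,z$ is a $\kk$-point is also correct and matches the paper. But the part you yourself identify as "the bulk of the content" is not actually proved: both of your proposed routes to showing $d_\infty = 0$ are only sketches, and neither gives a mechanism that closes the gap.

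The paper's argument avoids the dimension bookkeeping entirely. After reducing to one $\kk$-point $y'$ and one (possibly non-closed) point $z$ in $p^{-1}(x)$, it sets $Z_n := \overline{\{z_n\}} \subseteq Y'_n$, fixes $N$ so that $\phi_{n+1}^{-1}$ is defined and a local isomorphism at $z_n$ for $n\geq N$ (via Proposition~\ref{prop-RS}), and defines the closed subschemes
\[
Z^{(k)} \;=\; Z_N \cap \bigcap_{n\geq N+k}\phi^{\,n-N}\bigl((\psi^k)^{-1}\psi^k(y'_n)\bigr)\;\subseteq\;Z_N.
\]
Closedness follows from properness of $\phi$ and from $y'_n$ being a closed point. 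Every $\kk$-point of $Z_N$ lies in some $Z^{(k)}$ by the bijection hypothesis~(iii), so the countable ascending union $\bigcup_k Z^{(k)}$ covers all $\kk$-points of $Z_N$; uncountability of $\kk$ forces $Z^{(k)}=Z_N$ for some $k$. Crucially, the conclusion is read off from $z_N$ itself being a point of $Z_N = Z^{(k)}$: by the local-isomorphism choice of $N$, one deduces $z_n \in (\psi^k)^{-1}\psi^k(y'_n)$ for all $n\geq N+k$, hence $\Psi^k(z)=\Psi^k(y')$. This step, where the generic point of $Z_N$ is captured by the constraint set, is what replaces (and makes rigorous) your vague plan of "upgrading pointwise coincidence to a uniform $l$."

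Your first sketch points in the right direction (uncountably many $\kk$-points, seek a uniform bound), but the phrase "a noetherian bound on the degree at which family-wise tails stabilize" is not a citation to anything in the paper, and it is exactly the thing one would need to establish. Your second sketch does not appear to lead anywhere: Corollary~\ref{cor-curves} controls indeterminacy points along curves \emph{in $X$}, whereas the obstruction you face is a positive-dimensional fiber of $p$ over a single point $\sigma^k(x)$; lifting a generic curve through $\sigma^k(x)$ away from $\Omega$ gives no information about that fiber. Also worth flagging: it is not obvious that $\dim\Psi^k(T)$ is well-defined and non-increasing in $k$ in the sense you need --- $\Psi^k$ is built from the proper but generally non-injective $\psi$-maps, and you would have to argue that $\Phi_n$ remains a local isomorphism along all of $T$, not just at $y$ and $z$; the paper avoids this by never invoking dimension at all.
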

\begin{proof}
If $y, z$ are $\kk$-points, this follows from the fact that $\H \to X$ is a
bijection on $\kk$-points.
It suffices, then, to prove the proposition in the case that $z$
is not a $\kk$-point and $y$ is a $\kk$-point; in fact,  by the first sentence of the proof 
it suffices to prove the proposition for $z$ and for {\em one} $\kk$-rational
point $y' \in p^{-1}(x)$.  

Let $z=(z_n)$ and let $Z_n := \bbar{\{z_n\}} \subseteq Y'_n$.  By Proposition~\ref{prop-RS}, there is
some $N\in \NN$ so that for $n \geq N$ the rational map $\phi^{-1}_{n+1}$ is defined and is a local isomorphism at $z_n$.  Thus the fiber $\phi^{-1}_{n+1} (z_n)$ is a singleton,  equal to 
$ z_{n+1}$.  That is, for $n \geq N$ the morphism $\phi_{n+1}:  Z_{n+1} \to Z_{n}$
is
birational and scheme-theoretically surjective, and $\kk(z_n) = \kk(z)$.
Let $Z_\infty:= \varprojlim_{\phi} Z_n$.  Let $y' = (y'_n)$ be a $\kk$-point of $Z_\infty \subseteq Y_\infty$.

For all $k \in \NN$, let
\[ Z^{(k)} = Z_N \cap \bigcap_{n \geq N+k} \phi^{n-N} ((\psi^k)^{-1} \psi^k (y'_n)).\]
This is a closed subscheme of $Z_N$, since $\phi$ is  proper and $y'_n$ is a closed point.  We have $Z^{(k)} \subseteq Z^{(k+1)}$.

If $y'' \in Z_\infty$ is a $\kk$-point, then $\Psi^k(y'') = \Psi^k(y')$ for some
$k$ by the first sentence of the proof.  
Thus, the countable union 
$\bigcup_k Z^{(k)}$ contains all $\kk$-points of $\Phi_N(Z_\infty) = Z_N$.  Since $\kk$ is uncountable, we must have some $Z^{(k)} = Z_N$
by the following well-known fact:
\begin{lemma}
Suppose $Z$ is an irreducible variety over a field $\kk$ of cardinality $\aleph$.  If $S$ is a set of cardinality $\beth$ strictly less than $\aleph$ and $V\subset Z$ is a union of proper closed subsets $V_i$, $i \in S$, then $Z\smallsetminus V$ contains a closed point.
\end{lemma}
We are grateful to Ravi Vakil for the following argument.
\begin{proof}
We may assume that  $Z$ is affine and that each $V_i$ is an irreducible hypersurface.   Suppose the statement is true for $Z=\mathbb{A}^n$, where $n=\operatorname{dim}(Z)$.  Then 
Noether normalization gives a finite surjective map $\phi: Z\rightarrow \mathbb{A}^n$ and the images of the $V_i$ under $\phi$ are irreducible
hypersurfaces; if $x\in \mathbb{A}^n\smallsetminus \bigcup_{i\in S} \phi(V_i)$ is closed, so is $\phi^{-1}(x)$.  So it suffices to establish the case 
$Z=\mathbb{A}^n$, which we do by induction; $n=1$ is clear.  For the inductive step, project $\mathbb{A}^n\rightarrow \mathbb{A}^1$; only 
$\beth$-many of the fibers can be among the $V_i$, so we can choose a fiber which is not among the $V_i$ and, by the inductive hypothesis, a closed point in it.  
\end{proof}

Let $n \geq N+k$.  As $\phi^{n-N}((\psi^k)^{-1} \psi^k(y'_n)) \ni z_N$, by choice of $N$ we have 
\[ z_n = (\phi^{n-N})^{-1}(z_N)  \in (\psi^k)^{-1} \psi^k(y'_n),\]
and $\psi^k(z_n) = \psi^k(y'_n)$.  
Thus $\Psi^k(z) = \Psi^k(y')$.
\end{proof}

\begin{corollary}\label{cor-1pt}
Adopt Notation~\ref{not1}, and assume that  Hypotheses~\ref{hyp-FOO} hold.  
Let $x$ be a $\sigma$-fixed $\kk$-point of $X$.  Then $p^{-1}(x) =  \{v\}$ is a single $\kk$-point, and $\Psi^{-1}$ is defined and is a local isomorphism at $v$.
\end{corollary}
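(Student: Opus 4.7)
The plan is to first produce a $\Psi$-fixed $\kk$-point $v \in p^{-1}(x)$, then verify the local-isomorphism assertion via Proposition~\ref{prop-RS}, and finally rule out any other point of $p^{-1}(x)$ using a second application of Proposition~\ref{prop-RS}.

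To build $v$, note that hypothesis (iii) supplies a unique $\kk$-point of $\H$ above $x$, and any $R$-point module representing this class gives a $\kk$-point $y \in F(\kk)=Y_\infty(\kk)$ with $p(y)=x$. Since $p\Psi = \sigma p$ and $\sigma(x)=x$, the image $\Psi(y)$ is again a $\kk$-point of $p^{-1}(x)$. Proposition~\ref{prop-contract} applied to the two $\kk$-points $y, \Psi(y) \in p^{-1}(x)$ yields $\Psi^k(y)=\Psi^{k+1}(y)$ for $k \gg 0$; setting $v := \Psi^k(y)$ for such a $k$ produces a $\Psi$-fixed $\kk$-point of $p^{-1}(x)$. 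To obtain the local-isomorphism property at $v$, apply Proposition~\ref{prop-RS} to the singleton sets $L_n := \{v_n\}$ and the morphisms $\psi_n$: the identity $\psi_n(v_n)=v_{n-1}$ (from $\Psi(v)=v$) gives $\psi_n(L_n) \subseteq L_{n-1}$, and the proposition then yields that $\psi_n^{-1}$ is defined and is a local isomorphism at $v_{n-1}$ for $n \gg 0$, which is exactly the statement that $\Psi^{-1}$ is defined and is a local isomorphism at $v$.

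For uniqueness, I reduce to the following key claim: if $w \in p^{-1}(x)$ satisfies $\Psi(w)=v$, then $w=v$. Granting this, let $z \in p^{-1}(x)$ be arbitrary; by Proposition~\ref{prop-contract}, $\Psi^k(z)=v$ for $k \gg 0$, and since each $\Psi^j(z)$ lies in $p^{-1}(x)$ (using $p\Psi=\sigma p$ and $\sigma(x)=x$), applying the key claim in turn to $\Psi^{k-1}(z), \Psi^{k-2}(z), \ldots, z$ forces $z=v$. To prove the key claim, take $L_n := \{w_n, v_n\}$ and apply Proposition~\ref{prop-RS} to $\psi_n$: because $\Psi(w)=\Psi(v)=v$, we have $\psi_n(L_n)=\{v_{n-1}\}$, so $\psi_n(L_n)\subseteq L_{n-1}$. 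If $w_n \neq v_n$ for infinitely many $n$, the stabilized cardinality $\#L_n$ would be $2$ while $\#\psi_n(L_n)=1$, contradicting the bijectivity of $\psi_n\colon L_n \to L_{n-1}$ asserted in Proposition~\ref{prop-RS}. Thus $w_n = v_n$ for all $n \gg 0$, and applying the $\phi_m$ downward then forces $w_n = v_n$ for every $n$, so $w=v$.

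Once Propositions~\ref{prop-contract} and \ref{prop-RS} are in hand, the proof is essentially formal; the only subtlety to watch is that the equalities produced by the cardinality argument above genuinely identify scheme-theoretic points of the $Y_n$ (and hence of $Y_\infty$), which is automatic here because $v_n$ is a closed $\kk$-point, so any point of $Y_n$ with the same underlying topological point is $v_n$ itself.
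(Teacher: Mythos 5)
Your proof is correct and follows the same skeleton as the paper's: both produce a $\Psi$-fixed $\kk$-point $v\in p^{-1}(x)$ by applying Proposition~\ref{prop-contract} to a point of $p^{-1}(x)$ and its shift, and both then feed singleton sets into Proposition~\ref{prop-RS}. The one step where you are too quick is the claim that the conclusion for the $\psi_n$ alone ``is exactly'' the statement that $\Psi^{-1}$ is defined and a local isomorphism at $v$: since $\Psi$ is the map induced on $Y_\infty=\varprojlim_{\phi_n}Y_n$, passing the local inverses of the $\psi_n$ to the limit also requires controlling the transition maps $\phi_n$ near the $v_n$, which is why the paper records that \emph{both} $\phi_n^{-1}$ and $\psi_n^{-1}$ are defined and are local isomorphisms at $v_{n-1}$ for $n\gg 0$; the $\phi$-half is free (for any point of the inverse limit one has $\phi_n(v_n)=v_{n-1}$, so the same application of Proposition~\ref{prop-RS} with $L_n=\{v_n\}$ gives it), so this is an omission rather than a genuine gap. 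Where you genuinely diverge is uniqueness: the paper writes $p^{-1}(x)=\bigcup_k\Psi^{-k}(v)$ via Proposition~\ref{prop-contract} and then uses that the $\Psi$-fiber over $v$ is trivial, which is read off from the local-isomorphism statement, whereas you apply Proposition~\ref{prop-RS} to the two-element sets $\{w_n,v_n\}$ to show directly that the only point of $p^{-1}(x)$ with $\Psi(w)=v$ is $v$ itself and then descend through $\Psi^{k-1}(z),\dots,z$. Your variant is slightly more self-contained, since it does not require unpacking what ``defined at $v$'' says about the set-theoretic fiber of $\Psi$, at the cost of one more use of Proposition~\ref{prop-RS}. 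A final cosmetic point: a $\kk$-point of $\H$ over $x$ is represented by a finitely generated module whose \emph{tail} is a point module, not necessarily by a point module on the nose; either note that the class is $\Psi$-fixed (because $\sigma(x)=x$ and the fiber of $\H(\kk)\to X(\kk)$ over $x$ is a singleton), hence equals the class of a suitable shift which is a genuine point module, or simply start, as the paper does, from an arbitrary point of $p^{-1}(x)$.
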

\begin{proof}
Let $z \in p^{-1}(x)$.  As $p\Psi(z) = \sigma p(z) = x$, by Proposition~\ref{prop-contract} for some $k$,  $\Psi^k(z) = \Psi^{k+1}(z) =: v$ is a $\kk$-point of $Y_\infty$ and of $p^{-1}(x)$.  Let $v= (v_n)$. 
    As $\Psi(v) = v$, we have  $\psi_{n+1}(v_{n+1}) = \phi_{n+1}(v_{n+1}) = v_{n}$ for all $n \in \NN$.  By Proposition~\ref{prop-RS}, for $n \gg 0$ both $\phi_n^{-1}$ and $\psi_n^{-1}$ are defined and are local isomorphisms at $v_{n-1}$.   Since $\Psi = \varprojlim_{\phi_n} \psi_n$, it follows that $\Psi^{-1}$ is defined and is a local isomorphism at $v$.  
By Proposition~\ref{prop-contract}, $p^{-1}(x) = \bigcup_{k \in \NN} \Psi^{-k}(v)$.
Thus $p^{-1}(x) =  \{v\}$ is a single $\kk$-point.    
\end{proof}

We now prove a key result:  that the point space $Y_\infty$ differs from the coarse moduli space $X$ only at points of infinite order.

\begin{theorem}\label{thm-VIII}
Adopt Notation~\ref{not1}, and assume that  Hypotheses~\ref{hyp-FOO} hold. 
Then all points where $p^{-1}:  X \dra Y_\infty$ is undefined have  infinite $\sigma$-orbits.
\end{theorem}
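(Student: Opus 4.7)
The plan is to argue by contradiction: assume $x \in X$ lies in the indeterminacy locus of $p^{-1}$ yet $\sigma^m(x) = x$ for some $m \geq 1$. Note first that the indeterminacy locus is $\sigma$-stable, since $p\Psi = \sigma p$: if $s: U \to Y_\infty$ is a section of $p$ over an open $U$, then $\Psi \circ s \circ \sigma^{-1}$ is a section of $p$ over $\sigma(U)$.

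First, I would produce a $\kk$-rational $\Psi^m$-periodic point $v \in p^{-1}(x)$. By Lemma~\ref{lem:Hdefn}, every $\kk$-point of $\H$ is the image of a $\kk$-point of $Y_\infty$ under some $\pi_n = \Psi^{-n}\pi$. Combined with hypothesis~(iii) and the intertwining of $\Psi$ and $\sigma$, this produces a $\kk$-point $z$ of $Y_\infty$ with $p(z)$ in the $\sigma$-orbit of $x$; replacing $x$ by $p(z)$, we may assume $z \in p^{-1}(x)$. Since $\Psi^m z$ also lies in $p^{-1}(x)$, Proposition~\ref{prop-contract} supplies $k \geq 0$ with $\Psi^k z = \Psi^{k+m} z$; setting $v := \Psi^k z$ gives $\Psi^m v = v$, and after a final replacement of $x$ by $p(v)$ we have $v \in p^{-1}(x)(\kk)$.

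Next, I would derive local-isomorphism structure from the finite $\Psi$-orbit of $v$. The set $L_n := \{v_n, (\Psi v)_n, \dots, (\Psi^{m-1}v)_n\} \subseteq Y'_n$ satisfies $\phi_n(L_n) = \psi_n(L_n) = L_{n-1}$, using $\Psi^m v = v$ and the commutation of $\phi$ and $\psi$ with $\Phi_n$ and $\Psi$. Two applications of Proposition~\ref{prop-RS}---once to $\phi$, once to $\psi$---yield, for all $n \gg 0$, that both $\phi_n^{-1}$ and $\psi_n^{-1}$ are defined and are local isomorphisms at each point of $L_{n-1}$. Hence $\Psi$ restricts to an isomorphism on a Zariski neighborhood of the orbit of $v$ in $Y_\infty$. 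Moreover, finite generation of the right ideal of $R$ defining the $\kk$-point module $M_v$ (as in the proof of Proposition~\ref{prop-VII}, now applied over $\Spec \kk$) produces an $N$ for which $\Phi_N: Y_\infty \to Y'_N$ is a local isomorphism at $v$.

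Finally, I would verify that $f_N: Y'_N \to X$ is a local isomorphism at $v_N := \Phi_N(v)$, which combined with the local isomorphism $\Phi_N$ at $v$ gives $p^{-1}$ defined at $x$, the desired contradiction. The morphism $f_N$ is proper (as $Y'_N$ is projective), birational (by Lemma~\ref{lem-birational}), and $X$ is locally factorial, hence normal, at $x$ by hypothesis~(ii). By Zariski's Main Theorem (a proper birational morphism to a normal target that is quasi-finite at a point is a local isomorphism there), it suffices to show $v_N$ is isolated in $f_N^{-1}(x)$. Positive-dimensional components of the fibre through $v_N$ would, over an uncountable algebraically closed $\kk$, contain infinitely many $\kk$-points, each lifting to a closed point of $p^{-1}(x)$; but Proposition~\ref{prop-contract} combined with the local invertibility of $\Psi$ near $v$ forces each such lift to coincide with $v$, hence each such closed point to equal $v_N$. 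Alternatively, any curve $D \subseteq Y'_N$ mapped by $f_N$ to $\{x\}$ would, via the lifts furnished by Proposition~\ref{prop-VII}, produce infinitely many indeterminacy points of $p^{-1}$ on a curve in $X$ meeting $x$, contradicting Corollary~\ref{cor-curves}. The main obstacle is this last isolation argument: while the preceding steps follow fairly directly from the machinery already developed, the interplay between the local-isomorphism data at $v$ inside $Y_\infty$ and the structure of the scheme-theoretic fibre of $f_N$ over $x$ requires careful handling, especially in extracting lifts of non-closed points from the proscheme $Y_\infty$.
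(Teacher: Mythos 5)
There is a genuine gap at the decisive step. Your appeal to Zariski's Main Theorem requires $X$ to be normal at $x$, and you obtain this from ``$X$ is locally factorial, hence normal, at $x$ by hypothesis~(ii)''. But Theorem~\ref{thm-VIII} is stated under Hypotheses~\ref{hyp-FOO}, whose item (ii) says only that $X$ is a variety of dimension $\geq 2$; the local factoriality/surface assumption is item (ii) of Theorem~\ref{thm-main} and is \emph{not} in force here. Without normality the implication ``proper, birational, finite fibre over $x$ $\Rightarrow$ local isomorphism at $v_N$'' is false (a finite birational map onto a unibranch non-normal point has a singleton fibre but is not a local isomorphism), so the contradiction is not reached. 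Nor can the hypothesis be smuggled in from Theorem~\ref{thm-main}: in the surface case of that theorem no factoriality is assumed, and the paper's statement that $X$ is nonsingular at such points (Lemma~\ref{lem:surface implies points in P are smooth}) is itself \emph{deduced from} Theorem~\ref{thm-VIII}, so your route would be circular there. A further, smaller issue: $f_N\colon Y'_N\to X$ need not be ``birational'' in the sense ZMT wants, since $Y'_N$ may have components through $v_N$ that do not dominate $X$; finiteness of the fibre alone does not exclude a second branch through $v_N$ mapping finitely onto a curve through $x$. (Your fallback isolation argument is also off: a curve contracted by $f_N$ to $x$ does not produce a curve in $X$ of indeterminacy points, so Corollary~\ref{cor-curves} does not apply; isolation itself is fine, though, since Corollary~\ref{cor-1pt} already gives $p^{-1}(x)=\{v\}$.)

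Your preliminary steps do track the paper: after passing to a Veronese so that $\sigma(x)=x$, Corollary~\ref{cor-1pt} (built on Proposition~\ref{prop-contract}) gives that $p^{-1}(x)=\{v\}$ is a single $\kk$-point with $\Psi^{-1}$ a local isomorphism at $v$, and Proposition~\ref{prop-RS} gives $N$ with $\Phi_N$ a local isomorphism at $v$, so $f_N^{-1}(x)=\{v_N\}$. Where the paper diverges is precisely at the step you outsource to ZMT: instead of any normality argument, it glues $U_1=Y'_N\smallsetminus f_N^{-1}(W_N\smallsetminus\{x\})$ to $X\smallsetminus\{x\}$ along $f_N$ to build a scheme $\Xt$ with maps $h\colon Y'_N\to\Xt$ and $i\colon\Xt\to X$, lifts $\sigma$ to an automorphism $\wt\sigma$ of $\Xt$ (using that $\psi^{-1}$ is defined at $v_N$, again from Corollary~\ref{cor-1pt}), assembles the cocone $a_n=(\wt\sigma)^{-n}h\Phi_N$ to get a map $\H\to\Xt$ via Lemma~\ref{lem:Hdefn}, and then invokes the corepresenting property of $X$ plus Lemma~\ref{lem-surj-equal} to conclude $X\cong\Xt$, hence that $f_N^{-1}$ and $p^{-1}$ are defined at $x$. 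It is this use of the universal property of $X$, not any local algebraic hypothesis on $X$, that makes the proof work; to repair your argument you would either need to reproduce such a universal-property argument or add a normality hypothesis the theorem does not have.
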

\begin{proof}
We adopt Notation~\ref{not3} to obtain maps $f_n:  Y'_n \to X$ for all $n \geq n_0$,  
where the indeterminacy locus of $f_n^{-1}$ consists of isolated points.  

Suppose that $x \in X$ has a finite orbit.  By passing to a Veronese of $R$, and
thus replacing $\sigma$ by $\sigma^n$, we may assume that $\sigma(x) = x$.
 By Corollary~\ref{cor-1pt}, $p^{-1}(x) = \{v\}$ is a single $\kk$-point. Using Proposition~\ref{prop-RS}, let $N \geq n_0$ be such that $\Phi_N$ is a local isomorphism at $v$, and let $v_N: = \Phi_N(v)$.  Then $f_N^{-1}(x) = \{ v_N\}$.  
We will show that  $f_N$ is a local isomorphism at $v_N$:  that is, that $f_N^{-1}$ is defined at $x$.

Let $W_N$ be the (finite) indeterminacy locus of $f_N^{-1}$.  
Let $U_1 := Y'_N \smallsetminus f_N^{-1} (W_N \smallsetminus \{ x\})$.  Let $U_2
:= X \smallsetminus \{x\}$.  
Let $U_{12}:= U_1 \smallsetminus \{v_N\}$ and let $U_{21}:= X \smallsetminus W_N$.
 Let $\Xt$ be
the glueing of $U_1$ and $U_2$ along the isomorphism $f_N: U_{12}\to U_{21}$.  

For $j = 1, 2$ let $i_j:  U_j \to \Xt$ be the canonical map.  Since $i_1 = i_2
f_N$ as  a map from $U_{12}$ to $X$, we may glue $i_1$ and $i_2$ to obtain a
morphism $i:  \Xt \to X$ which is an isomorphism away from $i_1(v_N) \in \Xt$.

Let $U_3 := Y'_N \smallsetminus \{v_N\}$.  Then  $U_3 \cup U_1 = Y'_N$ and $U_3 \cap
U_1 = U_{12}$.  As 
\[ \xymatrix{ U_3 \ar[r]^{f_N} & U_2 \ar[r]^{i_2} & \Xt } \quad\quad \mbox{and} \quad\quad
\xymatrix{U_1 \ar[r]^{i_1} & \Xt}\]
agree on $U_{12}$, we may glue these maps to obtain a morphism $h:Y'_N \to \Xt$, with $ih = f_N$.  It follows from  Proposition~\ref{prop-II}(2) that $h|_{U_3}$ is scheme-theoretically surjective as a map from $U_3 \to U_2$, and so $h$ is scheme-theoretically surjective.

We next lift $\sigma$ to an automorphism $\wt{\sigma}$ of $\Xt$.
Let: 
\begin{multline*} U_1' = Y'_{N+1} \ssm f_{N+1}^{-1}( (W_{N+1} \cup \sigma^{-1} (W_N)) \ssm \{x\}), \\  
 U_2' = U_2 = X \ssm \{x\}, \\
  U_{12}' = Y'_{N+1} \ssm f_{N+1}^{-1}(  W_{N+1} \cup \sigma^{-1} (W_N)), \\ 
U_{21}' = X \ssm (  W_{N+1} \cup \sigma^{-1} (W_N)).
\end{multline*}
Just as above, we may glue $U_1'$ and $U_2'$ along the isomorphism $f_{N+1}: U_{12}' \to U_{21}'$.  
It is clear the resulting scheme is isomorphic to $\Xt$, and we obtain $i'_j:   U'_j \to \Xt$ for $j = 1,2$.

By construction, $\sigma$ induces an automorphism of $U_2'$ and $\psi(U_1') \subseteq U_1$.  It follows from the equation $\sigma f_{N+1} = f_N \psi_{N+1}$ that the diagram
\[ \xymatrix{
 U'_{12} \ar[r]^{\psi} \ar[d]_{f_{N+1}}^{\cong} & U_1 \ar[r]^{i_1} & \Xt \ar@{=}[d] \\
U'_{21} \ar[r]_{\sigma} & U'_2 \ar[r]_{i_2'} & \Xt}
\]
commutes, and so we may glue $i_1 \psi:  U'_1 \to \Xt$ and $i_2' \sigma:  U'_2 \to \Xt$ to obtain $\wt{\sigma}:  \Xt \to \Xt$. 

By Corollary~\ref{cor-1pt}, $\psi^{-1}$ is defined at $v_N$.  
Thus $(\wt{\sigma})^{-1}$ is defined at $v_N$.
As $\sigma$ induces an automorphism of $U'_2$, we obtain that $\wt{\sigma} \in \Aut(\Xt)$.
Clearly $h \psi_{N+1} = \wt{\sigma} h \phi_{N+1}$.

For $n \in \ZZ$, let $a_n = (\wt{\sigma})^{-n} h \Phi_N:  Y_\infty \to \Xt$.  We have $a_{n+m} \Psi^m = a_n$.
From the universal property of colimits and Lemma~\ref{lem:Hdefn}, there is an induced map  $a: \H \to \Xt$ so that
\[ \xymatrix{Y_{\infty} \ar[r]^{\pi} \ar[dr]_{a_0 = h \Phi_N } & \H \ar[d]^{a} \\
  & \Xt }
\]
commutes.

 By the universal property of $X$ there is thus a morphism $j: X \to \Xt$ so that 
\[ \xymatrix{
Y_{\infty}\ar[d]_p \ar[rrd]^(0.45){h \Phi_N} \ar[rr]^{\pi}&& \H \ar[d]^{a} \\
X \ar[rr]_j && \Xt}
\]
commutes.  
Clearly $ij = \id_X$, as $ij$ is the identity on closed points and $X$ is a variety.   As $h \Phi_N = jp$ is scheme-theoretically surjective, $j$ is also scheme-theoretically surjective.  By Lemma \ref{lem-surj-equal}(\ref{scholium}), $i$ and $j$ are isomorphisms and $X \cong \Xt$.  
In particular, both
$f_N^{-1} = (ih)^{-1}$ and $p^{-1} = (f_N \Phi_N)^{-1} $ are defined at $x$.  
\end{proof}

 To end the section, we  give an important ring-theoretic consequence of Theorem~\ref{thm-VIII}.

\begin{theorem}\label{thm-g}
Let $R$ be a connected graded noetherian algebra generated in degree 1 over an uncountable algebraically closed field $\kk$, and adopt Notation~\ref{not1}.   
Assume that  Hypotheses~\ref{hyp-FOO} hold.  
Let $K$ be the field of fractions of $X$.  Then there are an automorphism $\sigma$ of $K$ and a $\kk$-algebra homomorphism $g:  R \to K[t; \sigma]$ so that $\ker g$, $\ker \theta^e$, and $\ker \theta'$ are all equal.   
\end{theorem}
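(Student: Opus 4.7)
The plan is to apply the machinery already developed and pinpoint where the new input is needed. Under Hypotheses~\ref{hyp-FOO}, Lemma~\ref{lem-birational} tells us that $p: Y_\infty \to X$ is an isomorphism in codimension $0$, so Proposition~\ref{prop-II}(3) produces both the automorphism $\sigma$ of $K$ and an algebra map $g:R \to K[t;\sigma]$. Inspecting the construction of $g$ in that proof, one sees that $g$ is obtained by applying $\theta$, successively restricting to $Y'_n$, $Y^e_n$, and $X_n$ (using $X_n \subseteq Y^e_n \subseteq Y'_n$), and then evaluating at the generic point of the integral scheme $X_n$. Hence $g$ factors through $\theta'$ and $\theta^e$, giving the easy chain $\ker \theta' \subseteq \ker \theta^e \subseteq \ker g$.

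The remaining task is the reverse inclusion $\ker g \subseteq \ker \theta'$. By Corollary~\ref{cor-ann}, it suffices to show that any $r \in R_n$ with $g(r) = 0$ annihilates $M_0$ for every $R_A$-point module $M$ over every commutative $\kk$-algebra $A$. Because $Y'_n$ is a finite-type scheme over an uncountable algebraically closed field, this reduces (fibrewise) to checking the $\kk$-point case. Under the correspondence between $\kk$-points of $Y_\infty$ and $\kk$-point modules, the condition $M_0 \cdot r = 0$ becomes: $\theta(r)$ vanishes at $\Phi_n(y)$ for every $y \in Y_\infty(\kk)$. Now $g(r) = 0$ means $\theta(r)$ vanishes at the generic point of $X_n$ and therefore, since $X_n$ is integral, on all of $X_n$. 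Hence the vanishing at $\Phi_n(y)$ is automatic whenever $x := p(y)$ lies outside the indeterminacy locus $\Omega$, since then $\Phi_n(y) \in X_n$ by the uniqueness of the lift.

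The main obstacle is thus the case $x \in \Omega$, where $\Phi_n(y)$ may lie in $Y'_n \setminus X_n$. Here the crucial input is Theorem~\ref{thm-VIII}: every $x \in \Omega$ has infinite $\sigma$-orbit. I would combine this with Corollary~\ref{cor-curves} and the hypothesis $\dim X \geq 2$ to produce, for any $x \in \Omega$, an irreducible projective curve $C \subseteq X$ passing through $x$ that meets $\Omega$ in only finitely many points. The rational section $p^{-1}:C \dashrightarrow Y_\infty$ is then defined on a dense open of $C$ and its scheme-theoretic closure $\tilde{C} \subseteq Y_\infty$ is an irreducible curve whose image $\Phi_n(\tilde{C})$ is an irreducible curve in $Y'_n$ containing the relevant preimage of $x$ and dominated generically by $X_n$. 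Since $\theta(r)$ vanishes on the dense open subset of $\Phi_n(\tilde{C})$ landing in $X_n$, it vanishes on the whole curve $\Phi_n(\tilde{C})$, and in particular at our chosen preimage of $x$. Finally, Proposition~\ref{prop-contract} shows that any two $\kk$-points of $p^{-1}(x)$ agree under $\Psi^k$ for $k \gg 0$, and a short shift argument (using $g(\Psi^k r \cdot \text{anything}) = 0$, or equivalently passing between the fibre points through a common descendant) reduces the general $y \in p^{-1}(x)$ to one lying on such a lifted curve.

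The principal technical difficulty is orchestrating these three ingredients---infinite-orbit dynamics (Theorem~\ref{thm-VIII}), countability of $\Omega$, and curve lifting (Corollary~\ref{cor-curves} and Proposition~\ref{prop-VII})---so as to dispose of every fibre of $p$ over every $\sigma$-orbit in $\Omega$ while simultaneously controlling the scheme-theoretic (possibly non-reduced) structure of $Y'_n$ away from $X_n$. Once vanishing of $\theta(r)$ at all $\kk$-points of $Y'_n$ is established, the equality $\ker g = \ker \theta'$ follows, closing the loop and yielding $\ker g = \ker \theta^e = \ker \theta'$.
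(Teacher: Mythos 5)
Your setup (getting $\sigma$ and $g$ from Lemma~\ref{lem-birational} and Proposition~\ref{prop-II}, the chain $\ker\theta'\subseteq\ker\theta^e\subseteq\ker g$, and the plan to prove $\ker g\subseteq\ker\theta'$ via Corollary~\ref{cor-ann}) is sound, but the core of your argument has two genuine gaps. First, the reduction ``to the $\kk$-point case'' is not justified: $r\in\ker\theta'$ means the section $\theta(r)$ is zero on the scheme $Y'_n$ (equivalently, $M_0r=0$ for point modules over \emph{arbitrary} commutative $\kk$-algebras $A$), and $Y'_n$ is not known to be reduced, so vanishing at all closed points is strictly weaker. You flag this (``controlling the scheme-theoretic structure'') but never resolve it. Second, and more seriously, the curve-lifting step does not reach the points you identified as the main obstacle. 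The closure $\tilde C$ of the section $p^{-1}|_{C\smallsetminus\Omega}$ consists of limits of points lying on the strict transform $X_\infty$, so $\Phi_n(\tilde C)\subseteq X_n$; it therefore cannot contain a fiber point $\Phi_n(y)\in Y'_n\smallsetminus X_n$, which is exactly the case at issue. The proposed repair via Proposition~\ref{prop-contract} cannot close this gap either: $\Psi^k(y)=\Psi^k(y')$ only identifies the tails of the corresponding point modules, so it yields statements of the form $M_kr=0$ (i.e.\ $\ker\theta^e$-type information), whereas $M_0r=0$ concerns the low-degree part that tail-equivalence does not see; there is no mechanism for transporting vanishing backwards along $\Psi$, and indeed the whole point of distinguishing $\theta'$ from $\theta^e$ is that such backward transport is nontrivial.

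For comparison, the paper's proof works at the level of sections on $Y'_n$ rather than pointwise. It sets $J:=(\ker g+\ker\theta')/\ker\theta'\subseteq\theta'(R)$, uses noetherianity of $R$ to generate $J_{\geq N}$ as both a left and a right ideal by finitely many elements $b_i$ of degree $N$, and writes $h\in J_n$ in the two forms $\sum(\phi^{n-N})^*b_i(\psi^N)^*c_i=\sum(\phi^N)^*a_i(\psi^{n-N})^*b_i$; the first shows $h$ vanishes on $Y'_n\smallsetminus f_n^{-1}(W_N)$ and the second on $Y'_n\smallsetminus f_n^{-1}(\sigma^{-(n-N)}(W_N))$, and Theorem~\ref{thm-VIII} (infinite orbits of indeterminacy points) makes these two loci disjoint for $n\gg0$, forcing $J_n=0$. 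This gives $\ker g=\ker\theta'$ in large degree \emph{as sections}, with no reducedness issue and no need to locate individual fiber points. The exact equality is then obtained precisely where you would want your shift argument: for an arbitrary $R_A$-point module $M=R_A/I$, Corollary~\ref{cor-ann} gives $I\supseteq\ker\theta'\supseteq(\ker g)_{\geq N}$, so $(I+\ker g)/I$ is torsion in $M_R$, and Sublemma~\ref{sublem-tf} (point modules are torsion-free over $R$) forces $\ker g\subseteq I$; intersecting over all $M$ gives $\ker g=\ker\theta'$, and the domain property of $K[t;\sigma]$ then yields $\ker\theta^e=\ker g$. If you want to salvage your outline, you would need to replace the curve-lifting and shift steps by an argument of this kind that controls sections on all of $Y'_n$, not just at $\kk$-points of $X_n$ and its lifted curves.
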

\begin{proof}
The automorphism $\sigma$ and the map $g$ are given by Proposition~\ref{prop-II}.

We first show that $\ker g$, $\ker \theta'$, and $\ker \theta^e$ are all equal in large degree.   
By Theorem~\ref{thm-universal1} we have $(\ker g)_{\gg 0} \supseteq (\ker \theta^e)_{\gg 0}$.  As $\ker \theta^e \supseteq \ker \theta'$, it suffices to prove that $\ker \theta'$ and $\ker g$ are equal in large degree.  
Let $\bbar{R}:= \theta'(R)$ and let $J := (\ker g + \ker \theta')/(\ker \theta') \subseteq \bbar{R}$.  
 For $n \geq n_0$, let $X_n$ be the strict
transform of $X$ in $Y'_n$.  Then 
\[ J_n = \{ h \in \bbar{R}_n \subseteq H^0(Y'_n, \sM'_n) \st h|_{X_n} \equiv 0\}.\]
Since $R$ is noetherian,  
for some $N \geq n_0$ there are $b_1, \ldots, b_k \in J_N$ that generate
$J_{\geq N} $ as both a right and a left ideal. 

Let $n \geq N$ and let $h\in J_n$, so $h|_{X_n}$ is identically 0.  Then there are
$a_1, \ldots, a_k, c_1, \ldots, c_k \in \bbar{R}_{n-N}$ so that
\[ h = \sum_i (\phi^{n-N})^*b_i (\psi^N)^* c_i = \sum_i (\phi^N)^* a_i (\psi^{n-N})^* b_i.\]
Now, if $y \in Y'_n$ with $ f_n(y)\not\in W_N$, then $f_N$ is a local isomorphism at $\phi^{n-N}(y)$.  Thus each $b_i$ vanishes in a  neighborhood of $\phi^{n-N}(y)$, and so $(\phi^{n-N})^* b_i$ vanishes identically along $(\phi^{n-N})^{-1} \phi^{n-N}(y) = f_n^{-1} f_n(y)$.  That is, from the first equation we conclude that $h$ vanishes at all points in $Y'_n \ssm f_n^{-1}(W_N)$.  
  Likewise, it follows from the second equation that $h$ vanishes away from 
$f_n^{-1}(\sigma^{-(n-N)}(W_N))$.    Since by Theorem~\ref{thm-VIII} all points in $W_N$ have infinite $\sigma$-orbits, for $n \gg 0$ the
set $W_N \cap \sigma^{-(n-N)}(W_N) = \emptyset$, and all $h \in J_n$ must vanish identically on $Y'_n$:  that is, $J_n = 0$ for $n \gg 0$. 

We now claim that $\ker g = \ker \theta'$.  
 Let $A$ be a commutative $\kk$-algebra, and let $M = R_A/I$ be an $R_A$-point module.  By Corollary~\ref{cor-ann}, $I \supseteq \ker \theta'$, and by the above $\ker \theta'$ and $\ker g$ are equal in large degree. Thus  there is some $N \in \NN$ so that $I \supseteq (\ker g)_{\geq N}$.  Thus $(I + \ker g)/I$ is a torsion submodule  of $ M_R$.   But we have:
\begin{sublemma}\label{sublem-tf}
 Let $R$ be a noetherian $\kk$-algebra generated in degree 1, let $A$ be a commutative $\kk$-algebra, and let $M$ be an $R_A$-point module.  Then $M_R$ is torsion-free.
\end{sublemma}
\begin{proof}
Let $m \in M_i$, and suppose that $m R_k = 0$.  By shifting, we may suppose that $i=0$, and as $M$ is an $R_A$-point module we may identify $M_0$ with $A$ and let $m \in A$.  By abuse of notation, we let $1 \in M_0$ be the generator of $M$.  Then we have $0 = m R_k A= 1\cdot m R_k A= 1 \cdot R_k A m = M_k m$.  As $M_k \cong A$, we have $m =0$.
\end{proof}

Returning to the proof of Theorem~\ref{thm-g}, the torsion submodule $(I+\ker g)/I$ of $M_R$ is 0, so $\ker g \subseteq I$.

As $I = \Ann_{R_A}(M_0)$ and $M$ was arbitrary, we have  
\[ \ker g \subseteq  \bigcap \{ \Ann_R(M_0) \st M \mbox{ is an $R_A$-point module for some commutative $\kk$-algebra $A$ } \}.\]
This  is $\ker \theta'$ by Corollary~\ref{cor-ann}.  Thus we have
$\ker \theta'  \subseteq \ker g \subseteq \ker \theta'$ and the two are equal.

We thus have $\ker g = \ker \theta' \subseteq \ker \theta^e$, and the three are equal in large degree.  But $g(R) \subseteq K[t; \sigma]$, and this is a domain; hence the torsion ideal $(\ker \theta^e)/(\ker g)$ of $g(R)$ must be 0, and $\ker \theta^e = \ker g = \ker \theta'$.
\end{proof}

\begin{corollary}\label{cor:birational factor}
 Assume  Hypotheses~\ref{hyp-FOO} hold, and let $g:  R \to K[t; \sigma]$ be the algebra homomorphism constructed in Proposition~\ref{prop-II}.
Then $g$ is universal for maps from $R$ to birationally commutative algebras.  
In particular, the canonical birationally commutative factor of $R$ is birationally commutative.
\end{corollary}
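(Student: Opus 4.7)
The plan is to derive this corollary directly from the preceding Theorem~\ref{thm-g} together with the universal property already established in Theorem~\ref{thm-universal1}. The argument is essentially bookkeeping; the substantive work has been done.

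First I would establish universality of $g$. Let $\alpha \colon R \to \Delta$ be any graded $\kk$-algebra homomorphism into a birationally commutative algebra $\Delta$. Theorem~\ref{thm-universal1} provides $n \in \NN$ with $\ker \alpha \supseteq (\ker \theta^e)_{\geq n}$, which is the statement that $\alpha$ factors through $\theta^e$ up to finite dimension. The key input from Theorem~\ref{thm-g} is the equality $\ker g = \ker \theta^e$, so $(\ker g)_{\geq n} = (\ker \theta^e)_{\geq n} \subseteq \ker \alpha$, and $\alpha$ factors through $g$ up to finite dimension. This is precisely the universal property in the sense used throughout the paper.

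For the second assertion, I would observe that $K = \kk(X)$ is a field, hence a noetherian commutative $\kk$-algebra, so by Definition~\ref{def-BC} the skew polynomial ring $K[t;\sigma]$ is birationally commutative, and therefore so is its graded subalgebra $g(R)$. The equality $\ker g = \ker \theta^e$ from Theorem~\ref{thm-g} yields an isomorphism of graded $\kk$-algebras $\theta^e(R) \cong g(R)$, so the canonical birationally commutative factor $\theta^e(R)$ is itself birationally commutative — genuinely earning its name in this setting, which (in view of Example~\ref{eg-freealg}) requires real hypotheses. There is no obstacle to overcome here beyond what has already been handled: the only potential subtlety is confirming that Definition~\ref{def-BC} permits $A = K$, and this is immediate since fields are noetherian.
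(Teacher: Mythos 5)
Your proof is correct and follows exactly the paper's route: the first assertion is Theorem~\ref{thm-universal1} combined with the identity $\ker g = \ker\theta^e$ from Theorem~\ref{thm-g}, and the second is the observation that $g(R)\cong\theta^e(R)$ is a graded subalgebra of $K[t;\sigma]$. The paper's own proof is essentially a one-line version of the same argument, leaving the second assertion implicit.
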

\begin{proof}
 Since $\ker g = \ker \theta^e$, this is just the universal property of $\theta^e$ from Theorem~\ref{thm-universal1}.
\end{proof}

\section{Defining data for the canonical birationally commutative factor}\label{DATA}
We now want to understand the canonical birationally commutative factor $g(R)$ of $R$.  We will see in the next section that $g(R)$ is equal in large degree to a \nba\ $R(X, \sL, \sigma, P)$.
We have already constructed $\sL$ and $\sigma$, and in this section we show that $\sL$ is invertible and $\sigma$-ample.  We construct the 0-dimensional subscheme $P$ that is the missing piece of data for the na\"ive blowup, and show that $g(R) \subseteq R(X, \sL, \sigma, P)$.  We show also that  the points in $P$ have dense orbits.  (In the final section, we prove that they have critically dense orbits, and that the indeterminacy locus of $p^{-1}$ is supported on these orbits.)
Throughout, we assume that the hypotheses of Theorem~\ref{thm-main} hold.  

We begin by showing that $\sL$ is invertible.
We need a further piece of notation.

\begin{notation}\label{not-ZW}
Let $R$ be a connected graded algebra generated in degree 1, and assume that the hypotheses of Theorem~\ref{thm-main} hold; further adopt Notation~\ref{not3}.  
For $n \geq n_0$ let $X_n \subseteq Y_n$ be the strict transform of $X$, and let $X_\infty := \varprojlim X_n \subseteq Y_\infty$.  Let $W'_n \subseteq W_n $ be the locus in $X$ where $(f_n|_{X_n})^{-1}:  X \dra X_n$ is undefined, and let $\mb{W} := \bigcup_{n \geq n_0} W'_n$.  
Note that $\mb{W}$ is the locus where $(p|_{X_\infty})^{-1}:  X \dra X_{\infty}$ is undefined.
\end{notation}

It is immediate that $\psi_n(X_n) = X_{n-1}$ and $\Psi(X_\infty) = X_\infty$.  Thus $X_\infty \subseteq Y_\infty^e$.

\begin{lemma}\label{lem:surface implies points in P are smooth}
 If $X$ is a surface, then $X$ is nonsingular at all points in $\mb{W}$.  
\end{lemma}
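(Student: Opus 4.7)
The plan is to argue by contradiction using Theorem~\ref{thm-VIII}: a point $x\in \mathbf{W}$ where $X$ is singular would have to lie in an infinite $\sigma$-orbit entirely contained in the singular locus, and I will derive a contradiction from the dimension constraint on $\operatorname{Sing}(X)$ together with Corollary~\ref{cor-curves}.

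First I need the comparison $\mathbf{W}\subseteq \Omega$, where $\Omega\subseteq X$ is the indeterminacy locus of $p^{-1}:X\dashrightarrow Y_\infty$. Since $X_\infty\subseteq Y_\infty$ is the closure of the image of the generic point (the strict transform), on the open set $V=X\smallsetminus \Omega$ the morphism $p^{-1}\colon V\to Y_\infty$ factors through the closed subscheme $X_\infty$ (it contains the generic image and $X_\infty$ is closed), so $(p|_{X_\infty})^{-1}$ is defined on $V$; hence $\mathbf{W}\subseteq \Omega$. Second, I need that $\Omega$ is closed under $\sigma^{-1}$, equivalently that $\sigma(V)\subseteq V$. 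For this I use the commutation $p\Psi=\sigma p$ of Proposition~\ref{prop-II}(1): if $p^{-1}$ extends to a morphism on an open $U'$, then $\Psi\circ p^{-1}\circ \sigma^{-1}\colon\sigma(U')\to Y_\infty$ is a morphism whose composition with $p$ is the identity, and which agrees with the rational inverse $p^{-1}$ where both are defined; hence $p^{-1}$ extends to $\sigma(U')$, showing $\sigma(V)\subseteq V$. Consequently the backward $\sigma$-orbit of any point of $\Omega$ stays in $\Omega$.

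Now take $x\in \mathbf{W}$ and suppose for contradiction that $x\in \operatorname{Sing}(X)$. By Theorem~\ref{thm-VIII}, $x$ has infinite $\sigma$-orbit, so (since $\sigma$ is an automorphism) its backward orbit $\{\sigma^{-n}(x)\st n\geq 0\}$ is also infinite. The automorphism $\sigma$ preserves $\operatorname{Sing}(X)$, and by the previous step the backward orbit lies in $\Omega\cap \operatorname{Sing}(X)$. Because $X$ is a two-dimensional variety, $\operatorname{Sing}(X)$ has dimension at most $1$. If it is zero-dimensional, it is finite, directly contradicting the infinite orbit.

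The remaining, and main, case is $\dim \operatorname{Sing}(X)=1$. Let $C$ be the irreducible component of $\operatorname{Sing}(X)$ containing $x$, a projective curve. Since $\sigma$ permutes the finitely many irreducible components of $\operatorname{Sing}(X)$, some power $\sigma^k$ preserves $C$ set-theoretically. The backward $\sigma^k$-orbit of $x$ is still infinite, is contained in $C$ (as $\sigma^{-k}$ preserves $C$), and is contained in $\Omega$ (by the backward-stability above), so $C\cap \Omega$ is infinite. This contradicts Corollary~\ref{cor-curves}, which says that any curve in $X$ meets $\Omega$ in only finitely many points. Hence $x$ must be nonsingular, completing the proof. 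I expect the main technical obstacle to be the careful verification of $\sigma(V)\subseteq V$ in the fpqc-space setting, but this is handled cleanly by the commutation $p\Psi=\sigma p$ and birationality of $p$ (Lemma~\ref{lem-birational}).
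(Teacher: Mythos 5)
Your overall skeleton matches the paper's: reduce to showing the backward $\sigma$-orbit of a point of $\mb{W}$ stays in the indeterminacy locus, invoke Theorem~\ref{thm-VIII} for infiniteness of the orbit, split $X^{\sing}$ into a $\sigma$-invariant curve part and a finite part, and contradict Corollary~\ref{cor-curves}. The soft steps (including $\mb{W}\subseteq\Omega$ and the passage to a power of $\sigma$ preserving a component of $X^{\sing}$) are fine.

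The gap is in the step you yourself flag as the technical heart, namely $\sigma(V)\subseteq V$, i.e.\ $\sigma^{-1}(\Omega)\subseteq\Omega$. In this paper ``$p^{-1}$ is defined at $x$'' has the strong meaning of Diagram~\eqref{indeterminacy-diagram}: the projection $\Spec\sO_{X,x}\times_X Y_\infty\to\Spec\sO_{X,x}$ is an isomorphism. Your construction $\Psi\circ p^{-1}\circ\sigma^{-1}$ only produces a \emph{section} of $p$ over $\sigma(U')$ (equivalently, a section of $\Spec\sO_{X,\sigma(x)}\times_X Y_\infty\to\Spec\sO_{X,\sigma(x)}$ agreeing generically with $p^{-1}$). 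A section does not force the fiber to be trivial: the fiber over $\sigma(x)$ could still contain extra components of $Y_\infty$ (which in general is larger than $X_\infty$), or be non-reduced, or contain a positive-dimensional exceptional locus through which your section happens to pass, so you cannot conclude $\sigma(x)\notin\Omega$. What is missing is a surjectivity input. This is exactly how the paper argues, and it does so for $\mb{W}$ rather than $\Omega$: for $z\notin\mb{W}$ one base-changes $\psi\colon X_{n+1}\to X_n$ (which \emph{is} scheme-theoretically surjective, since $\psi(X_{n+1})=X_n$) along the flat map $\Spec\sO_{X,\sigma(z)}\to X$; the resulting section of $\Spec\sO_{X,\sigma(z)}\times_X X_n\to\Spec\sO_{X,\sigma(z)}$ is then also scheme-theoretically surjective, and Lemma~\ref{lem-surj-equal}(2) upgrades ``scheme-theoretically surjective section'' to ``isomorphism,'' giving $\sigma^{-1}(\mb{W})\subseteq\mb{W}$. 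Since $\mb{W}\subseteq\Omega$, this suffices for your final contradiction; but your argument, which has a section with no surjectivity statement for the map $\Psi$ on $Y_\infty$, does not establish the stability you need (note $\Psi(Y_\infty)$ need not be all of $Y_\infty$, so no analogous surjectivity is available at the level of $\Omega$). To repair the proof, replace your $\Omega$-stability claim by the paper's $\mb{W}$-stability argument via $\psi|_{X_{n+1}}$ and Lemma~\ref{lem-surj-equal}(2).
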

\begin{proof}
 Let $z \in X \ssm \mb W$.  For any $n \geq n_0$, consider the commutative diagram
\[ \xymatrix{ \Spec \sO_{X, z} \times_X X_{n+1} \ar[r]^{\psi} \ar[d]_{\cong} & \Spec \sO_{X, \sigma(z)} \times_X X_n \ar[d]^{f_n} \\
\Spec \sO_{X, z} \ar[r]_{\sigma}^{\cong} &  \Spec \sO_{X, \sigma(z)}. }
\]
The top map is  (scheme-theoretically) surjective.
By Lemma \ref{lem-surj-equal}(\ref{scholium}), the top and right-hand maps are isomorphisms, and   $(f_n|_{X_n})^{-1}$ is defined at $\sigma(z)$.
Since $n$ was arbitrary,  $\sigma(z) \not\in \mb W$.  It follows that   $\sigma^{-1}(\mb W) \subseteq \mb W$.

Let $X^{\rm sing}$ be the singular locus of $X$.  Write $X^{\rm sing} = X^{(1)} \cup X^{(2)}$, where $X^{(1)}$ is a (possibly reducible or empty) $\sigma$-invariant curve and $X^{(2)}$ is 0-dimensional and $\sigma$-invariant.  
Let $w \in \mb{W} \cap X^{\sing}$.  By Theorem~\ref{thm-VIII} the $\sigma$-orbit of  $w$ is infinite, so  $w \in X^{\rm sing} \ssm X^{(2)} \subseteq  X^{(1)}$.

As $X^{(1)}$ is $\sigma$-invariant, 
\[ \{ \sigma^{-k}(w) \st k \geq 0\} \subseteq X^{(1)} \cap \mb{W},\]
which contradicts Corollary~\ref{cor-curves}.  Thus no such $w$ can exist.
\end{proof}

\begin{corollary}\label{cor:L is invertible}
 Adopt Notation~\ref{not3}, and assume that the hypotheses of Theorem~\ref{thm-main} hold.
Then $\sL$ is invertible, and $g(R) \subseteq B(X, \sL, \sigma)$.
\end{corollary}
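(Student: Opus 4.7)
The plan is to prove the two assertions in sequence: first that $\sL$ is invertible, then that $g(R) \subseteq B(X,\sL,\sigma)$.

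\textbf{Invertibility of $\sL$.} I would first show that every $\sN_n$ of Notation~\ref{not3} is a line bundle; by Proposition~\ref{prop-N-L}(5) this forces $\sL_n \cong \sN_n$ to be invertible for all $n$, and in particular $\sL$ itself. The sheaf $\sN_n$ is rank-one reflexive by Proposition~\ref{prop-N-L}(1), and on the open set $X \ssm W_n$ it agrees with the invertible sheaf $\sR'_n$ (since $f_n$ is a local isomorphism there, by Proposition~\ref{prop-N-L}(2)), so it is already invertible away from the finite set $W_n$. The next step is to observe that $W_n$ is contained in the indeterminacy locus of $p^{-1}$: if $p^{-1}$ were defined at $x \in X$, then $\Phi_n \circ p^{-1}$ would supply a local lift of $x$ under $f_n$, contradicting $x \in W_n$. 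By hypothesis~(ii), $X$ is therefore locally factorial at every point of $W_n$, either by direct assumption in the higher-dimensional case, or in the surface case because smooth surfaces are locally factorial and Lemma~\ref{lem:surface implies points in P are smooth} applies. Since rank-one reflexive sheaves on a locally factorial scheme are invertible, $\sN_n$ is a line bundle.

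\textbf{The inclusion $g(R) \subseteq B(X,\sL,\sigma)$.} With $\sL$ invertible, all double duals in the definition of $\sL_n$ become redundant, and the injection $\sR'_n \hookrightarrow \sN_n = \sL_n$ (an isomorphism in codimension one between torsion-free sheaves) is a genuine inclusion of sheaves. I would then compose the map $\theta:R \to B = \bigoplus H^0(Y_n,\sM_n)$ of Lemma~\ref{lem-RZ} with restriction to $Y'_n$, pushforward along $f_n$, and inclusion into $H^0(X,\sL_n)$:
\[
R_n \xrightarrow{\theta} H^0(Y_n,\sM_n) \to H^0(Y'_n,\sM'_n) \xrightarrow{(f_n)_*} H^0(X,\sR'_n) \hookrightarrow H^0(X,\sL_n).
\]
The multiplication formulas of Proposition~\ref{prop-N-L}(3) (together with $\sigma$-equivariance) show that these assemble into a graded $\kk$-algebra homomorphism $\tilde g: R \to B(X,\sL,\sigma)$. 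Evaluating each $H^0(X,\sL_n)$ at the generic point of $X$ embeds $B(X,\sL,\sigma)$ into $K[t;\sigma]$; the resulting composite $R \to B(X,\sL,\sigma) \to K[t;\sigma]$ coincides with the map $g$ of Proposition~\ref{prop-II}(3), since both send $r \in R_n$ to its pullback along $i_n: \Spec K \to X_n \subseteq Y'_n$. This identification yields $g(R) \subseteq B(X,\sL,\sigma)$.

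\textbf{Main obstacle.} The subtlest point is Step~1's claim, in the surface case of hypothesis~(ii), that $X$ is smooth at every point of $W_n$—a priori larger than the set $\mb{W}$ handled directly by Lemma~\ref{lem:surface implies points in P are smooth}. The resolution is that points of $W_n$ have infinite $\sigma$-orbits by Theorem~\ref{thm-VIII}, so the very argument used in that lemma (combining $\sigma$-invariance of the singular locus with Corollary~\ref{cor-curves} to rule out singular curves containing infinitely many indeterminacy points) applies verbatim with $W_n$ in place of $\mb{W}$.
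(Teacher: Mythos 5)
Your overall strategy is the same as the paper's (reduce to invertibility of the reflexive sheaves $\sN_n$ via Proposition~\ref{prop-N-L}(5), treat indeterminacy points by local factoriality, and get the inclusion $g(R)\subseteq B(X,\sL,\sigma)$ essentially by construction), and your argument is fine in the locally factorial case, since $W_n\subseteq\Omega$ is available from Lemma~\ref{lem-birational}. The genuine gap is exactly at the point you flag as the ``main obstacle,'' in the surface case: you need smoothness (or local factoriality) of $X$ at \emph{every} point of $W_n$, the indeterminacy locus of $(f_n)^{-1}\colon X\dra Y'_n$, and the claim that the proof of Lemma~\ref{lem:surface implies points in P are smooth} ``applies verbatim with $W_n$ in place of $\mb W$'' does not hold up. That proof does not merely combine Theorem~\ref{thm-VIII} with Corollary~\ref{cor-curves}: its key step is the $\sigma^{-1}$-stability $\sigma^{-1}(\mb W)\subseteq\mb W$, which is what puts the entire backward orbit of a putative singular point inside the indeterminacy set lying on the $\sigma$-invariant singular curve $X^{(1)}$, so that Corollary~\ref{cor-curves} gives a contradiction. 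That stability is proved using the scheme-theoretic surjectivity of $\psi\colon X_{n+1}\to X_n$ on the \emph{strict transforms}; the analogous statement $\psi(Y'_{n+1})=Y'_n$ is not known (one only has $\phi(Y'_{n+1})=Y'_n$ and $\psi(Y'_{n+1})\supseteq Y^e_n$), and likewise $\sigma^{-1}(\Omega)\subseteq\Omega$ is not available at this stage (it follows only from the later description of $\Omega$ in Theorem~\ref{thm-final}(b), whose proof uses the present corollary, so invoking it would be circular). Knowing only that a single point $w\in W_n\cap X^{\sing}$ has infinite orbit does not produce infinitely many indeterminacy points on $X^{(1)}$, so no contradiction results.

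The paper avoids needing anything beyond $\mb W$ by splitting according to $W'_n$ rather than $W_n$: at a point $x\notin W'_n$ the strict transform $f_n|_{X_n}\colon X_n\to X$ is a local isomorphism, and invertibility of $(\sN_n)_x$ is deduced there from the torsion-free image $\sR''_n$ of $\sR'_n\to\sN_n$; local factoriality (via Lemma~\ref{lem:surface implies points in P are smooth} in the surface case) is then only needed at points of $W'_n\subseteq\mb W$. To repair your proof, replace your dichotomy ``$x\in W_n$ versus $x\notin W_n$'' by ``$x\in W'_n$ versus $x\notin W'_n$'' and supply the strict-transform argument at points of $W_n\ssm W'_n$; alternatively you would have to prove smoothness of $X$ along all of $W_n$, which does not seem accessible with the results available at this point. (A minor additional remark: your justification that $W_n\subseteq\Omega$ via ``$\Phi_n\circ p^{-1}$ supplies a local lift'' should be routed through the fiber-product argument with Lemma~\ref{lem-surj-equal}(2), or simply cited from Lemma~\ref{lem-birational}.) Your second step, identifying the composite $R\to\bigoplus H^0(X,\sL_n)\to K[t;\sigma]$ with $g$ at the generic point, is correct and is what the paper means by ``follows by construction.''
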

\begin{proof}
 By Proposition~\ref{prop-N-L}(5) it suffices to prove that $\sN_n = (\sR'_n)^{\vee\vee}$ is invertible for $n\gg 0$.  Adopt Notation~\ref{not-ZW}.  
Let $n \geq n_0$, and let $\sR''_n$ be the image of the natural map $\sR'_n \to \sN_n$. 
Then $\sR''_n$ is the $X$-torsion-free part of $\sR'_n$. 
Let $x \in X$. 
If $x \not \in W'_n$, then $X_n$ is locally isomorphic to $X$ at $x$, and so $(\sR''_n)_x = (\sN_n)_x$ is invertible.

Now suppose that $x \in W'_n$.  
If $X$ is locally factorial at points of indeterminacy of $p^{-1}$, then $\sN_n = (\sR'_n)^{\vee\vee}$ is invertible by definition. 
If $X$ is a surface then 
 $X$ is nonsingular at $x$ by Lemma~\ref{lem:surface implies points in P are smooth}, and so locally factorial at $x$.  
Again $(\sN_n)_x$ is invertible.

That $g(R) \subseteq B(X, \sL, \sigma)$ follows by construction.
\end{proof}

Now that we know $\sL$ is invertible, we can show that  $\sL$ is appropriately positive:  that is, $\sL$ is  $\sigma$-ample.  
The condition that $\sL$ is $\sigma$-ample is technical but important:  it means that the twisted tensor powers $\sL_n$ have the same good cohomological properties as the powers of an ample invertible sheaf.  More formally, 
we say that 
a sequence $\{\sS_n\}$ of bimodules on a scheme $X$ is {\em left }
(respectively, {\em right}) {\em ample} if for every $j \geq 1$ and every
coherent sheaf $\sM$ on $X$ we have
$H^j(X, \sS_n \otimes \sM)=0$ (respectively, $H^j(X, \sM \otimes \sS_n) =0$) for
$n
\gg 0$.   An invertible sheaf $\sL$ is {\em $\sigma$-ample} if $\{(\sL_n)_{\sigma^n}\}$ is left (equivalently,  by \cite[Theorem~1.2]{Keeler2000}, right) ample.  

Let $K$ be the function field of $X$, and let $\sigma$ denote also the element of $\Aut_\kk(K)$ that acts via pullback by $\sigma \in \Aut_\kk(X)$. Recall that a
graded subalgebra $S$ of $K[t; \sigma]$ is {\em big} if there is some $n
\geq 1$ and some $u \in S_n$ so that $K$ is generated by $S_n u^{-1}$ (as a field); morally, we want $K[t, t^{-1};\sigma]$ to be the graded  quotient ring of $S$.

\begin{theorem}\label{thm-ample}
Assume that the hypotheses of Theorem~\ref{thm-main} hold, and further adopt Notation~\ref{not-ZW}.  Then $\sL$ is $\sigma$-ample.
\end{theorem}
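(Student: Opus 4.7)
The plan is to prove $\sigma$-ampleness of $\sL$ by leveraging the noetherianity of $R$ (hence of $g(R) = R/\ker g$) together with the embedding $g(R) \subseteq B(X, \sL, \sigma)$ from Corollary~\ref{cor:L is invertible}. The overall strategy is to show that $g(R)$ is essentially a na\"ive blowup in large degree, and then convert the resulting noetherianity and Hilbert-function information into cohomological positivity of $\sL$.

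First I would analyse the images $\sR''_n$ of the natural maps $\sR'_n \to \sN_n \cong \sL_n$ from Notation~\ref{not3}. Since $\sL_n$ is invertible and $\sR''_n$ is $X$-torsion-free, I can write $\sR''_n = \sI_n \sL_n$ for an ideal sheaf $\sI_n \subseteq \sO_X$ whose cosupport lies in $\mb{W}$. The bimodule-algebra multiplication of Proposition~\ref{prop-N-L}(3), read off through the reflexifications $\sN_n$, yields containments $\sI_n \cdot (\sI_m)^{\sigma^n} \subseteq \sI_{n+m}$, and $g(R)_n \subseteq H^0(X, \sI_n \sL_n)$ with equality in large degree. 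Applying Proposition~\ref{prop-RS} to the finite sets of closed points at which $\sO_X/\sI_n$ is supported, and using Theorem~\ref{thm-VIII} to exclude finite $\sigma$-orbits in $\mb{W}$, I would deduce that for $n \gg 0$ the ideal $\sI_n$ factors as $\sI_P \cdot \sI_P^{\sigma} \cdots \sI_P^{\sigma^{n-1}}$ for a fixed $0$-dimensional $P \subset \mb{W}$. Consequently $g(R)$ agrees in large degree with the na\"ive blowup algebra $R(X, \sL, \sigma, P)$.

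The second step is to extract $\sigma$-ampleness from this description. Noetherianity of $g(R)$, now expressed in na\"ive-blowup form, forces its Hilbert function to grow polynomially of degree $\dim X$; since the discrepancy $\dim H^0(X, \sL_n) - \dim g(R)_n$ is at most $O(n)$ (the points of $P$ are $0$-dimensional and their orbits are disjoint in large degree by Theorem~\ref{thm-VIII}), this transfers to the growth of $\dim H^0(X, \sL_n)$. Combined with the projectivity of $X$ and $\sigma$-equivariance, one invokes a criterion of Keeler (in the spirit of \cite{Keeler2000}) to conclude that $\sigma$ acts quasi-unipotently on $\Num(X)$ and that $\sL$ is numerically positive under $\sigma$-twisting, which gives $\sigma$-ampleness.

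The principal obstacle is this last conversion: turning the algebraic data (noetherianity, polynomial growth of $g(R)_n$, product structure of $\sI_n$) into the geometric input required by Keeler's criterion, without circularly invoking na\"ive-blowup noetherianity theorems from \cite{RS-0} that themselves presuppose $\sigma$-ampleness. Overcoming this likely requires a direct bound on the higher cohomology $H^j(X, \sM \otimes \sL_n)$ for coherent $\sM$, proved by twisting by a $\sigma$-invariant ample class and tracking Euler characteristics along the $\sigma$-orbit structure of $P$ provided by Theorem~\ref{thm-VIII}.
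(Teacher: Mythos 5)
Your proposal stalls exactly at the point you yourself flag, and the gap there is not cosmetic: it is the entire content of the theorem.

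Keeler's criterion (in the form of \cite[Theorem~1.3]{Keeler2000}, which is what the paper invokes) has two inputs: (a) $\sigma$ acts quasi-unipotently on $\Num(X)$, and (b) some $\sL_m$ is \emph{ample}. Your route to (a) is fine and is the same as the paper's: $g(R)$ is a big noetherian subalgebra of $K[t;\sigma]$, hence has subexponential growth by Stephenson--Zhang, and then \cite[Proposition~3.5]{RZ2008} gives quasi-unipotence. But everything you propose for (b) --- polynomial growth of the Hilbert function of $g(R)$, transfer to $\dim H^0(X,\sL_n)$, ``numerical positivity under $\sigma$-twisting'' --- at best yields bigness of $\sL_n$, not ampleness. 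Section growth cannot distinguish a big line bundle from an ample one, and bigness is emphatically not enough for $\sigma$-ampleness. Moreover, the assertion that noetherianity forces polynomial growth of degree $\dim X$ is itself unjustified (noetherian connected graded algebras can have irrational or fractional GK-dimension, as the discussion of \cite{GK4} at the end of the paper reminds us), and it would be circular: polynomial growth of $h^0(\sL_n)$ at the expected rate is a consequence of the positivity you are trying to prove.

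Your first step also courts circularity. The factorisation $\sR_n = \sR_1\sR_1^\sigma\cdots\sR_1^{\sigma^{n-1}}$ is immediate from degree-one generation, but the claimed ``equality in large degree'' $g(R)_n = H^0(X,\sI_n\sL_n)$ is precisely the conclusion of \cite[Theorem~9.2]{RS-0} (Theorem~\ref{thm-surjcrit} in the paper), whose hypotheses include $\sigma$-ampleness. The paper defers that identification to Theorem~\ref{thm-final}, \emph{after} $\sigma$-ampleness has been established.

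What the paper actually does to get (b) is a direct geometric argument that has no algebraic surrogate in your outline: it works on the strict transform $X_n\subseteq Y_n'$, where the restriction $\sM = \sM_n'|_{X_n}$ is very ample, constructs an injection $\alpha:\sM\hookrightarrow f^*\sL_n$ whose cokernel divisor $\sT$ is effective and $f$-exceptional, shows $f^*\sL_n$ is big (on every strict transform $\wt V$, since $\wt V\not\subseteq T$) and nef (by distinguishing curves contracted by $f$ from strict transforms), and then pushes intersection numbers forward via the projection formula to verify Nakai's criterion for $\sL_n$ on $X$. It is the big-and-nef (not merely big) conclusion that upgrades to ampleness, and that extra nefness comes from the geometry of $f_n$, not from the Hilbert function of $g(R)$. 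Your ``direct bound on $H^j(X,\sM\otimes\sL_n)$ by twisting by a $\sigma$-invariant ample class'' would, if carried out, need to rediscover exactly this nefness input; as written it is a placeholder rather than a proof.
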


\begin{proof}
Fix $n\geq n_0$.    
Let $f:= f_n |_{X_n}$, so $f$ is birational, and let $\sM := \sM_n |_{X_n}$.  
There are natural maps
\[\xymatrix{
H^0(X_n, \sM)\otimes \sO_{X_n} \ar[r] & f^* f_* \sM \ar[r] & \sM.}
\]
Since $\sM$ is globally generated, the composition is surjective, and thus  $f^* f_* \sM \to \sM$ is surjective.  It is an isomorphism at the generic point of $X_n$, and so $\sM$ is the torsion-free part of $f^* f_* \sM$.

There is a morphism of sheaves $\sR'_n = (f_n)_* \sM'_n \to f_* \sM$ whose kernel and cokernel are supported on $W_n$.  
We have  $ (f_* \sM)^{\vee\vee}\cong (\sR'_n)^{\vee\vee} \cong \sL_n $.  Pulling back the canonical map $f_* \sM \to (f_* \sM)^{\vee\vee}$, we obtain a (nonzero) map $f^* f_* \sM \to f^* \sL_n$.  Since $f^* \sL_n$ is invertible and thus torsion-free, by the previous paragraph we obtain an induced injective map $\alpha:  \sM \to f^* \sL_n$ so that
\[ \xymatrix{
f^* f_* \sM \ar[r] \ar[rd] & \sM \ar[d]^{\alpha} \\
& f^* \sL_n  }
\]
commutes.   Let $\sT := f^* \sL_n \otimes \sM^{-1}$.  Then $\sT$ is effective and as a divisor is supported on an $f$-exceptional divisor, say $T$.

We claim next that $\sL_n$ is ample.  We will use terminology and results from \cite{Laz} on big and nef (Cartier) divisors, and  from \cite{Fulton1998} on intersection theory; in particular, if $V$ is a $k$-dimensional subvariety of $X$, let $[V]$ denote the associated $k$-cycle in $A_k(X)$, the group of $k$-cycles modulo rational equivalence.  Let $f_*:  A_k(X_n) \to A_k(X)$ denote  pushforward on cycle classes as in \cite{Fulton1998}.  We  identify  $A_0(X)$ and $A_0(X_n)$ with $\ZZ$; thus $f_*:  A_0(X_n) \to A_0(X)$ is the identity on $\ZZ$.  

Let $V \subseteq X$ be a subvariety of dimension $d >0$.  Let $\Vt$ be its strict transform in $Y_n$.  
As $T$ is $f$-exceptional, $\Vt \not\subseteq T$.  Thus $\sT|_{\Vt}$ is still effective, and  as $\sM$ is ample, $(f^* \sL_n)|_{\Vt} \cong \sM|_{\Vt} \otimes \sT |_{\Vt}$ is big by \cite[Corollary~2.2.7]{Laz}.  (We remark that, although \cite{Laz} is written with 
characteristic zero hypotheses throughout, the proof of Corollary~2.2.7 there does not use any hypothesis on the characteristic and the result holds in general.)

We claim that $f^* \sL_n$ is nef on $X_n$, and therefore its restriction is nef on $\Vt$.  To see this, let $C$ be an irreducible curve on $X_n$.  We must show that $c_1(f^* \sL_n). [C] \geq 0$.  
If $f$ contracts $C$ to a point, then $c_1(f^* \sL_n) . [C] = 0$.  Otherwise, $C$ is the strict transform of $f( C)$, and by the previous paragraph $f^* \sL_n |_C$ is big.  It thus has positive degree, and so $ c_1(f^* \sL_n) .[C] >0$.  

Thus $(f^* \sL_n)|_{\Vt}$ is big and nef.  By
\cite[Theorem~2.2.16]{Laz} (which works in arbitrary characteristic---cf. also \cite[Definition-Lemma~0.0]{Keel1999}),
$\int_{\Vt} (c_1 (f^* \sL_n))^d > 0$.   By repeated applications of the projection formula \cite[Proposition~2.5(c)]{Fulton1998}, we obtain
\[
\int_{V} (c_1 ( \sL_n))^d = (c_1 ( \sL_n))^d . f_*[ \Vt] = f_* \bigl( (c_1(f^* \sL_n))^d .[ \Vt]\bigr) = \int_{\Vt} (c_1 (f^* \sL_n))^d>0.
\]

In summary, we have established that $\int_{V} (c_1 ( \sL_n))^{\dim V} > 0 $ for any irreducible subvariety $V$ of $X$.  The Nakai criterion \cite[Theorem~III.1]{Kleiman1966} thus implies that $\sL_n$ is ample.  

It remains to conclude that $\sL$ is $\sigma$-ample.  
We   write $g(R) \subseteq B(X, \sL, \sigma) \subseteq K[t; \sigma]$, and may assume without loss of generality that $t \in g(R_1)$.   By Theorem~\ref{thm-g}, we may identify   $g(R_n)$ with $ \theta^e(R_n)$.  This set of sections embeds $X_n$ in some projective space, and as $X_n$ is birational to $X$ the vector space $g(R_n) t^{-n}$ generates $K$.  Thus  $g(R)$ is a big subalgebra of $K[t; \sigma]$.  As $g(R)$ is noetherian, by \cite[Theorem~0.1]{SteZh}, $g(R)$ has subexponential growth.  

The automorphism $\sigma$ acts naturally on the vector space $\Num(X)$ of divisors modulo numerical equivalence.   By \cite[Proposition~3.5]{RZ2008}, this action must be {\em quasi-unipotent}:  that is, all eigenvalues have modulus 1.  By \cite[Theorem~1.3]{Keeler2000}, $\sL$ is $\sigma$-ample.
\end{proof}

We next prove a lemma on the preimages of points in $\mb{W}$.

\begin{lemma}\label{lem-istaristar}
 Adopt Notation~\ref{not-ZW} and assume that the hypotheses of Theorem~\ref{thm-main} hold.
Let $n \geq n_0$ and let $f := f_n |_{X_n}: X_n \to X$.
Then $f_* \sO_{X_n} = X$.  
Further, if $w \in W'_n$ then $f^{-1}(w)$ is positive-dimensional.
\end{lemma}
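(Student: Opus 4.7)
The plan is to reduce both statements to Zariski's main theorem, once we verify that $X$ is normal at every point $w \in W'_n$. By construction $f = f_n|_{X_n}: X_n \to X$ is proper (as the restriction of a morphism of projective schemes), birational, and surjective, and by definition of $W'_n$ the map $f$ is a local isomorphism over every point of $X \smallsetminus W'_n$. Since $W'_n \subseteq \mb{W}$, hypothesis (ii) of Theorem~\ref{thm-main} combined with Lemma~\ref{lem:surface implies points in P are smooth} (applied in the surface case) gives that $X$ is locally factorial—hence in particular normal—at every point of $W'_n$.

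For part (1), I would consider the Stein factorization $X_n \xrightarrow{h} \Spec_X(f_* \sO_{X_n}) \xrightarrow{g} X$. Since $f$ is proper, $f_* \sO_{X_n}$ is a coherent sheaf of $\sO_X$-algebras, so $g$ is finite. Since $f$ is birational, so is $g$. Away from $W'_n$ the map $g$ is already an isomorphism because $f$ itself is a local isomorphism there. At a point $w \in W'_n$, the local ring $\sO_{X,w}$ is normal, so any finite birational $\sO_{X,w}$-algebra inside the function field $K = \kk(X)$ must equal $\sO_{X,w}$; thus $g$ is an isomorphism at $w$ as well. Therefore $g$ is a global isomorphism and $f_* \sO_{X_n} = \sO_X$.

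For part (2), I would argue by contradiction. Suppose $w \in W'_n$ but $f^{-1}(w)$ is zero-dimensional. Then $f$ is quasi-finite at every point of the (nonempty) fiber $f^{-1}(w)$. Because $f$ is proper, quasi-finiteness on a fiber implies finiteness of $f$ over some open neighborhood $V$ of $w$ (this is the Zariski main theorem / Deligne's form of it for proper maps). Restricting to $V$, we have a finite birational morphism $f^{-1}(V) \to V$, and by part (1) its direct image of the structure sheaf equals $\sO_V$; combined with normality of $\sO_{X,w}$ this forces $f^{-1}(V) \to V$ to be an isomorphism on stalks at $w$, contradicting $w \in W'_n$. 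Hence $f^{-1}(w)$ must be positive-dimensional.

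The only subtle point is verifying the normality hypothesis at $w$, which is precisely what Lemma~\ref{lem:surface implies points in P are smooth} and hypothesis (ii) of Theorem~\ref{thm-main} are designed to deliver; once this is in hand, both statements are formal consequences of standard birational geometry and I do not anticipate a genuine obstacle.
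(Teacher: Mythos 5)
Your proof is correct and follows essentially the same route as the paper: Stein factorization of $f$, normality of $X$ at points of $W'_n$ (via Lemma~\ref{lem:surface implies points in P are smooth} in the surface case or local factoriality otherwise), and the fact that a finite birational extension of a normal local ring is trivial, which forces the finite part of the factorization to be an isomorphism and hence $f_*\sO_{X_n}=\sO_X$. Your contradiction argument for the positive-dimensionality of $f^{-1}(w)$ (properness plus a zero-dimensional fiber gives finiteness over a neighborhood, hence a local isomorphism at $w$, contradicting $w\in W'_n$) just makes explicit the step the paper leaves implicit.
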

\begin{proof}
 Let $X'_n := \sS\!\it{pec}_X f_* \sO_{X_n}$ and consider the Stein factorization
\[ \xymatrix{
 X_n \ar[r]^{a} \ar[rd]_{f} & X'_n  \ar[d]^{b} \\
& X}
\]
of $f$.
If $X$ is a surface, by Lemma~\ref{lem:surface implies points in P are smooth} it is normal at all points in $W'_n$.  This also follows if $X$ is locally factorial at points in $W_n$.  In either case  the finite birational map $b$ is  a local isomorphism above $W'_n$, and $f^{-1}(w)$ must be positive-dimensional.

Above points not in $W'_n$, both $a$ and $b$ are local isomorphisms by definition.  Thus $b$ is a local isomorphism at all points and so an isomorphism.
\end{proof}

We note that this result may be proved without the local factoriality assumption on $X$, merely using Theorem~\ref{thm-VIII}.
We do not include this proof here.

We now define the final piece of data we need to define the \nba\ $R(X, \sL, \sigma, P)$. 
This will allow us to show that $\mb W$ is supported on finitely many dense $\sigma$-orbits.

\begin{notation}\label{not-final}
 Assume that the hypotheses of Theorem~\ref{thm-main} hold, and adopt Notation~\ref{not-ZW}.
For $n \in \NN$, let $\sR_n \subseteq \sL_n$ be the subsheaf generated by the sections in $g(R_n)$.
Let $P$ be the base locus of $g(R_1)$ --- that is, define $P$ by $\sI_P = \sR_1 \sL^{-1}$.
\end{notation}

\begin{lemma}\label{lem-locus}
 Assume that the hypotheses of Theorem~\ref{thm-main} hold, and adopt Notation~\ref{not-ZW}.
\begin{enumerate}
 \item We have $g(R) \subseteq R(X, \sL, \sigma, P)$.
\item 
Given $n \in \NN$, define
\[ P_n := \Supp( P \cup \sigma^{-1}(P) \cup \cdots \cup \sigma^{-(n-1)}(P)).\]
Then  $P_n = W'_n$  for $n \geq n_0$, and set-theoretically we have $\mb W = \bigcup_{k \geq 0} \sigma^{-k}(P)$.  
\end{enumerate}
\end{lemma}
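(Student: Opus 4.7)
For (1), since $R$ is generated in degree 1, each element of $g(R_n)$ is a $\kk$-linear combination of products $s_1 \cdot s_2^\sigma \cdots s_n^{\sigma^{n-1}}$ with $s_i \in g(R_1)$, with multiplication taken in $B(X, \sL, \sigma)$. By the defining equation $\sI_P = \sR_1 \sL^{-1}$, each $s_i$ lies in $H^0(X, \sI_P \sL)$; twisting by $\sigma^{i-1}$ and multiplying, every such product lies in
\[ H^0(X, \sI_P \sI_P^\sigma \cdots \sI_P^{\sigma^{n-1}} \sL_n) = R(X, \sL, \sigma, P)_n, \]
which proves the inclusion.

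For (2), the set-theoretic identity $\mb W = \bigcup_{k \geq 0} \sigma^{-k}(P)$ reduces to the equality $P_n = W'_n$ for $n \geq n_0$, since $\mb W = \bigcup_{n \geq n_0} W'_n$ by definition and $\bigcup_n P_n = \bigcup_{k \geq 0} \sigma^{-k}(P)$. To prove $P_n = W'_n$, I will compare the linear system $g(R_n) \subseteq H^0(X, \sL_n)$ with the geometry of the birational morphism $f := f_n|_{X_n}: X_n \to X$. Two ingredients are crucial: (a) the sections $g(R_n)$, pulled back to $X_n$, generate $\sM_n|_{X_n}$ as an $\sO_{X_n}$-module; this follows from the embedding $X_n \subseteq Y_n \hookrightarrow \PP^{\times n}$ together with $\sM_n = \sO(1,\ldots,1)|_{Y_n}$ and the fact that the image of $R_1^{\otimes n} \to H^0(Y_n, \sM_n)$ already contains the Segre-type products of coordinates, which generate $\sO(1,\ldots,1)$; and (b) from the proof of Theorem~\ref{thm-ample}, $\sM_n|_{X_n} = f^*\sL_n \otimes \sO(-T)$ for some $f$-exceptional divisor $T$.

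For the inclusion $P_n \subseteq W'_n$: if $x \in P_n$ and $f$ were a local isomorphism at $x$, the unique preimage $y \in f^{-1}(x)$ would lie off $T$ (since $f(T)$ has strictly higher codimension), so sections of $\sM_n|_{X_n}$ near $y$ identify canonically with sections of $\sL_n$ near $x$. By (a), some $s \in g(R_n)$ satisfies $s(y) \neq 0$, contradicting $s(x) = 0$. For the reverse inclusion, given $x \notin P_n$, each $\sigma^{i-1}(x)$ avoids $P$, so for $i = 1, \ldots, n$ one may choose $s_i \in g(R_1)$ with $s_i(\sigma^{i-1}(x)) \neq 0$; the $B(X, \sL, \sigma)$-product $s = s_1 \cdot s_2^\sigma \cdots s_n^{\sigma^{n-1}} \in g(R_n)$ then satisfies $s(x) \neq 0$ in $\sL_n \otimes k(x)$. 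Pulling back to $X_n$, $f^*s$ is a section of $\sM_n|_{X_n}$ that is nonvanishing as a section of $f^*\sL_n$ everywhere on $f^{-1}(x)$, but also vanishes along $T$ as an element of $\sM_n|_{X_n}$; this forces $f^{-1}(x) \cap T = \emptyset$. Combined with Lemma~\ref{lem-istaristar} (giving $f_*\sO_{X_n} = \sO_X$ and positive-dimensional fibers over every point of $W'_n$), one concludes that $f^{-1}(x)$ is a single point at which $f$ is a local isomorphism, so $x \notin W'_n$.

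The main obstacle is the last implication: showing that a fiber $f^{-1}(x)$ disjoint from $T$ must be a single point over which $f$ is an isomorphism. This requires identifying the positive-dimensional fibers of $f$ with the ones meeting $T$, which should follow from a Zariski's-main-theorem style argument using $f_*\sO_{X_n} = \sO_X$, the normality of $X$ along $\mb W$ (Lemma~\ref{lem:surface implies points in P are smooth} in the surface case, and the local factoriality assumption otherwise), and a comparison of the exceptional divisor $T$ with the full exceptional locus of the birational morphism $f$.
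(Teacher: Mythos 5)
Part (1) and the inclusion $P_n \subseteq W'_n$ in part (2) are fine and essentially follow the paper's own route (the paper phrases the latter as: if $x \notin W'_n$ then $f$ is a local isomorphism over $x$, the sections of $g(R_n)$ generate the image sheaf $\sR''_n$ there, and $(\sR_n)_x = (\sL_n)_x$). The problem is the other inclusion $W'_n \subseteq P_n$, where you yourself flag ``the main obstacle'': you need that a fiber $F = f^{-1}(x)$ disjoint from the divisor $T$ must be a single point at which $f$ is a local isomorphism, i.e.\ that every positive-dimensional fiber meets $T$. This is exactly the step you have not proved, and your proposed route via Zariski's main theorem and ``a comparison of the exceptional divisor $T$ with the full exceptional locus'' is not straightforward: even granting local factoriality so that the exceptional locus is a divisor, it is not clear a priori that every $f$-exceptional divisor, let alone every positive-dimensional fiber inside it, is contained in $\Supp T$, so the comparison you invoke does not obviously close the argument. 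As written, the proof of $W'_n \subseteq P_n$ is incomplete.

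The missing step has a short proof, and it is in effect how the paper argues. If $w \in W'_n$, then $F := f^{-1}(w)$ is positive-dimensional by Lemma~\ref{lem-istaristar}, and $\sM := \sM_n|_{X_n}$ is very ample; hence any nonzero section $\theta'(r)|_{X_n}$ of $\sM$ must vanish somewhere on the projective positive-dimensional scheme $F$ (equivalently, in your language: if $F \cap T = \emptyset$ then $\sM|_F \cong (f^*\sL_n)|_F \cong \sO_F$ would be a trivial very ample sheaf on a positive-dimensional projective scheme, which is absurd, so $F$ always meets $T$). Since the injection $\alpha: \sM \to f^*\sL_n$ is an $\sO_{X_n}$-module map, the image section $f^*g(r)$ also vanishes at that point of $F$; but $(f^*\sL_n)|_F$ is the pullback of the fiber $\sL_n \otimes k(w)$, so $f^*g(r)|_F$ is ``constant'' and its vanishing at one point of $F$ forces $g(r)(w)=0$. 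Thus every section in $g(R_n)$ vanishes at $w$, i.e.\ $\sR_n \subseteq \sI_w\sL_n$ and $w \in P_n$ --- no appeal to Zariski's main theorem, connectedness of fibers, or a description of the full exceptional locus is needed. With this observation inserted (either as the direct argument just given, or as the fiber-restriction argument showing $F \cap T \neq \emptyset$, which then contradicts your nonvanishing of $f^*s$ on $F$), your proof becomes correct and is essentially the paper's.
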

\begin{proof}
$(1)$. Since $\sR_n$ is generated by $g(R_n) = g(R_1)^n$, we have  $\sR_n = \sR_1 \sR_1^{\sigma} \cdots \sR_1^{\sigma^{n-1}}$.  $(1)$ follows immediately.

$(2)$.  We see that $\sR_n$ is invertible at $x$ if and only if each of the $\sR_1^{\sigma^i}$ are invertible at $x$, for  $0 \leq i \leq n-1$:  that is, $P_n$ is exactly the locus where $\sR_n$ is not invertible.  

Let $f:= f_n|_{X_n}$.  
Let $w \in W'_n$ and let $T := f^{-1}(w) \subseteq X_n$. 
By Lemma~\ref{lem-istaristar}, $\dim T \geq 1$.
Let $r \in R_n \ssm \ker \theta'$.  Then $\theta'(r)$ is a nonzero element of $H^0(Y'_n, \sM'_n)$.  
Because  $\dim T \geq 1$ and $\sM'_n$ is very ample, $\theta'(r)$ must vanish at some point of $T$.
Thus $g(r)$, regarded as a section of the invertible sheaf $\sL_n$, must vanish at $w$ and is contained in 
$H^0(X, \sI_w \sL_n)$.
We see that $\sR_n \subseteq \sI_w \sL_n$, and $w \in P_n$.

Now suppose that $x\not\in W'_n$.  
Let $\sR''_n$ be the image of the natural map $\sR'_n \to \sL_n$.
Since  $f$ is a local isomorphism above $x$, therefore $(\sR''_n)_x = (\sL_n)_x$ is invertible.

The sections in (the image of) $R_n$  generate $\sM_n |_{X_n}$ at all points of $X_n$.  
Thus their images under $g$ must locally generate $\sR''_n$ at $x$, and so
\[ (\sR_n)_x = (\sR''_n)_x = (\sL_n)_x,\]
and $x \not \in P_n$.

Thus $P_n = W'_n$.  
It follows that $\mb W = \bigcup_{k \geq 0} \sigma^{-k}(P)$.
\end{proof}

Our next goal is to show that all points of $P$ have dense orbits.  
Before proving this result, we need a lemma. 

\begin{lemma}\label{lem-ample}
Let $X$ be a projective scheme of dimension $\geq 2$, let $\sigma \in \Aut_\kk X$,
and let $\sL$ be a $\sigma$-ample invertible sheaf on $X$.  Let $\sI$ be an
ideal sheaf on $X$, and let $P$ be the subscheme defined by $\sI$.  For $n \geq
0$, let  $\sS_n :=  \sI \sI^{\sigma} \cdots
\sI^{\sigma^{n-1}}  \sL_n$.

Suppose that $(1)$ $\dim P = 0$ and $(2)$ for every $x \in \Supp P$ and every irreducible 
component $Y$ of $X$ so that $\{\sigma^n( x) \}$ meets $Y$, the set $\{\sigma^n (x)\}
\cap
Y$ is Zariski-dense in $Y$.    Then $\{(\sS_n)_{\sigma^n}\}$ is a left and right ample
sequence on $X$.
\end{lemma}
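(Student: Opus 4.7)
The plan is to verify left ampleness of $\{(\sS_n)_{\sigma^n}\}$; right ampleness then follows either by a symmetric argument, or by extending \cite[Theorem~1.2]{Keeler2000} to the setting of twisted ideal-sheaf sequences. Fix a coherent sheaf $\sM$ on $X$; we must show $H^j(X, \sS_n \otimes \sM) = 0$ for all $j \geq 1$ and all $n$ sufficiently large (depending on $\sM$).

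Write $\sJ_n := \sI \cdot \sI^\sigma \cdots \sI^{\sigma^{n-1}}$, so that $\sS_n = \sJ_n \otimes \sL_n$ (using invertibility of $\sL_n$), and let $V_n$ be the subscheme of $X$ cut out by $\sJ_n$. Hypothesis (1) makes $V_n$ zero-dimensional; hypothesis (2), together with disjointness of $\sigma$-orbits of distinct points, implies that the support of $V_n$ is the disjoint union $\bigsqcup_{i=0}^{n-1} \sigma^{-i}(\Supp P)$. Tensoring the short exact sequence
\[
0 \to \sS_n \to \sL_n \to \sL_n|_{V_n} \to 0
\]
with $\sM$ produces (via the $\shTor$ long exact sequence) a four-term sequence whose leading $\shTor_1$ term is supported on $V_n$. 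Combined with $\sigma$-ampleness of $\sL$---which gives $H^j(X, \sL_n \otimes \sM) = 0$ for $j \geq 1$ and $n \gg 0$---the zero-dimensionality of $V_n$ forces $H^j(X, \sS_n \otimes \sM) = 0$ for $j \geq 2$ automatically, while $H^1(X, \sS_n \otimes \sM) = 0$ reduces to the surjectivity (for $n \gg 0$) of the restriction map
\[
\rho_n : H^0(X, \sL_n \otimes \sM) \to H^0\bigl(V_n, (\sL_n \otimes \sM)|_{V_n}\bigr).
\]

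The surjectivity of $\rho_n$ is the main obstacle, and it is exactly where hypothesis (2) enters crucially. One cannot directly apply the Serre-type argument via $H^1(X, \sI_{V_n} \otimes \sL_n \otimes \sM) = 0$, because $\sI_{V_n} = \sJ_n$, so that statement \emph{is} what we are trying to prove. Instead, I would argue by splitting $V_n = V_n^{\mathrm{old}} \sqcup V_n^{\mathrm{new}}$, where $V_n^{\mathrm{old}}$ lies in a fixed finite subscheme $V_{n_0}$ (for some auxiliary $n_0$ chosen large relative to $\sM$) and $V_n^{\mathrm{new}}$ consists of the remaining ``recent'' orbit points $\bigsqcup_{i = n_0}^{n-1} \sigma^{-i}(\Supp P)$. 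For $n \gg n_0$, $\sigma$-ampleness applied to the \emph{fixed} coherent sheaf $\sI_{V_{n_0}} \otimes \sM$ gives $H^1(X, \sI_{V_{n_0}} \otimes \sL_n \otimes \sM) = 0$, hence surjectivity of $H^0(X, \sL_n \otimes \sM)$ onto $H^0(V_{n_0}, (\sL_n \otimes \sM)|_{V_{n_0}})$. For the ``new'' points, I would invoke hypothesis (2) to construct, for each individual $\sigma^{-i}(x) \in V_n^{\mathrm{new}}$, a section of $\sL_n \otimes \sM$ separating $\sigma^{-i}(x)$ from the other interpolation constraints: density of orbits in each component they meet ensures that the needed divisors exist in $|\sL_m|$ for appropriate $m$, and standard patching then yields surjectivity of $\rho_n$.

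The hard part is making this decomposition argument uniform as $V_n$ grows with $n$: naively, each additional orbit point in $V_n^{\mathrm{new}}$ might require increasing the twist, which could exceed the $\sigma$-ampleness threshold provided by $\sM$. The resolution is that the new points appear only at prescribed orbit locations, and density of orbits ensures those locations are generic enough that a single threshold suffices. This type of argument is carried out in essentially this setting (for a single critically dense orbit and irreducible $X$) in the proof of \cite[Theorem~3.1]{KRS}, and adapts to the present reducible, component-wise-dense setup.
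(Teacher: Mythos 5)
Your approach is genuinely different from the paper's, and it contains a real gap at the key step. The paper does \emph{not} attempt a direct cohomological argument. Instead it runs a devissage: first it reduces from $X$ to $X_{\rm red}$ by an induction on infinitesimal neighborhoods (using that the $\shTor$-kernel in the four-term sequence is supported in dimension $0$), then it reduces from $X_{\rm red}$ to a single irreducible component by observing that, because every point of $P$ has a dense orbit in each component it meets, no translate $\sigma^{-i}(x)$ can lie on the intersection of two components, so restriction to a component remains exact after tensoring with $\sS_n$. Once $X$ is integral, the paper simply cites \cite[Theorem~3.1(1)]{RS-0} as a black box. The whole point of the lemma is to extend that known result to the reducible, non-reduced setting by reduction, not by reproving it.

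Your proposal instead tries to prove the vanishing directly via the short exact sequence $0 \to \sS_n \to \sL_n \to \sL_n|_{V_n} \to 0$ and surjectivity of the restriction map $\rho_n$. The reduction to $\rho_n$ being surjective (given $\sigma$-ampleness of $\sL$) is correct, and you rightly note the $j\geq 2$ vanishing is automatic. But the surjectivity of $\rho_n$ is precisely the hard content here---it is essentially equivalent to what you are trying to prove, and you acknowledge as much. The ``old/new'' decomposition of $V_n$, the claim that density ``ensures that the needed divisors exist in $|\sL_m|$,'' and the assertion that ``standard patching then yields surjectivity'' are not proofs: interpolating independently at a growing family of orbit points, with a twist threshold uniform in $n$, is exactly the delicate inductive estimate that makes \cite[Theorem~3.1]{KRS} and \cite[Theorem~3.1]{RS-0} nontrivial. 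Saying that the argument ``adapts to the present reducible, component-wise-dense setup'' is doing all the work, and that adaptation is not obvious---it is what the paper's two devissage steps are carefully designed to avoid. If you want to follow your route, you would need to actually carry out the adaptation; a cleaner fix is to do what the paper does and reduce to the integral case where the cited theorem applies directly. (As a minor point, the relevant black box in the generality you need is \cite[Theorem~3.1(1)]{RS-0} for arbitrary zero-dimensional $P$ on a projective variety, not the version in \cite{KRS}, which is set up for a single closed point.)
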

\begin{proof}
By symmetry, it
suffices to prove that the sequence is right ample.  
By replacing $\sigma$ by a power, we may  suppose that each irreducible component
of $X$ is $\sigma$-invariant.

Let $X^{(i)}$ denote the $i$th infinitesimal neighborhood of $X_{\rm red}\subseteq X$; then $X^{(N)} = X$ for 
some $N$.  We first reduce to showing that $\{\sS_n|_{X_{\rm red}}\}$ is an ample sequence.  
 Let $\sM$ be a coherent sheaf supported on $X^{(m)}$.  Then there is an exact
sequence
\[ 0 \to \sM' \to \sM \to \sM'' \to 0,\]
where $\sM''$ is supported on $X^{\red}$ and $\sM'$ is supported on $X^{(m-1)}$.
Fix $j \geq 1$.  For any $n$, there is an exact sequence
\[ 0 \to \sK \to \sM' \otimes \sS_n \to \sM \otimes \sS_n \to \sM'' \otimes \sS_n \to
0,\]
where $\sK$ is supported on a set of dimension 0.  Assume, by way of induction hypothesis, that $H^j(X,
\sM'\otimes \sS_n)$ and $H^j(X, \sM'' \otimes \sS_n)$ vanish for $j \geq 1$ and $n \gg 0$. 
Then $H^j(X, \sM \otimes \sS_n) =0$ for $j \geq 1$ and $n \gg 0$.  Taking $m=N$, we conclude that if
$\{\sS_n|_{X_{\rm red}}\}$ is an ample sequence then so is $\{\sS_n\}$.  

Now suppose that $X = X_{\rm red}$ is  reduced 
and let $\{X_c\}_{c=1, \dots, \ell}$ be a list of the irreducible components of $X$.  Suppose that $\sM$ is a coherent sheaf supported on $X_1\cup\dots\cup X_j$ (for some $1\leq j\leq \ell$).  We have an exact sequence 
\begin{displaymath}
0\rightarrow \sM'\rightarrow \sM\rightarrow \sM|_{X_j}\rightarrow 0
\end{displaymath}
with $\sM'$ supported on $X_1\cup\dots\cup X_{j-1}$.  As all points of $P$ have dense orbits in their components, no translate of any point in $P$ can lie
in the intersection of two components.  Thus the sequence 
\begin{displaymath}
0\rightarrow \sM'\otimes \sS_n\rightarrow \sM\otimes \sS_n\rightarrow \sM|_{X_j}\otimes \sS_n\rightarrow 0,
\end{displaymath}
obtained by tensoring with $\sS_n$ remains exact.  An induction thus reduces us to showing that $\{ \sS_n|_{X_j} \}$ is a right ample sequence for each $j$:  that is, it suffices to prove the result in the case that  $X$ is both irreducible and reduced.  But this  is precisely
\cite[Theorem~3.1(1)]{RS-0}.  
\end{proof}

\begin{proposition}\label{prop-Zdense}
Assume that the hypotheses of Theorem~\ref{thm-main} hold, and further adopt Notation~\ref{not-final}.  Then any $w \in P$ has a dense orbit in $X$. 
\end{proposition}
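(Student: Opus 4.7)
The plan is to argue by contradiction: I assume some $w \in P$ has a $\sigma$-orbit that is not Zariski-dense in $X$, and derive a contradiction with the noetherianness of $g(R)$.

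First I would reduce to a tractable setup. By Lemma~\ref{lem-locus}(2) we have $w \in \mb{W}$, so Theorem~\ref{thm-VIII} guarantees the $\sigma$-orbit of $w$ is infinite. Let $Z$ be the Zariski closure of this orbit; then $Z$ is a proper $\sigma$-invariant closed subvariety of $X$ of positive dimension. After passing to a Veronese of $R$, which replaces $\sigma$ by a power and cyclically permutes the finitely many irreducible components of $Z$, I may assume the component of $Z$ containing $w$ is itself $\sigma$-invariant; replacing $Z$ by this component I may assume $Z$ is irreducible with $1 \leq \dim Z < \dim X$.

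Next I construct the obstruction ideal. Set
\[ J_n := g(R_n) \cap H^0(X, \sI_Z \sL_n),\]
the degree-$n$ sections of $g(R)$ vanishing along $Z$. The $\sigma$-invariance of $Z$ makes $J := \bigoplus_n J_n$ a graded two-sided ideal of $g(R)$ (for the left-multiplication check one uses that $\sigma^n$-pullback preserves vanishing on the $\sigma$-invariant $Z$). Since $g(R) \cong R/\ker g$ is noetherian, $J$ is finitely generated as a right ideal, say by elements of degree at most some $N$. The quotient $g(R)/J$ embeds, via the restriction map $H^0(X, \sL_n) \to H^0(Z, \sL_n|_Z)$, into the twisted homogeneous coordinate ring $B(Z, \sL|_Z, \sigma|_Z)$, hence has Gelfand-Kirillov dimension at most $\dim Z + 1 < \dim X + 1$, so $J$ is a proper nonzero ideal that carries almost all of the growth of $g(R)$.

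The hard part will be to convert this dimension gap into a contradiction with the bounded-degree generation of $J$. My plan is to construct an infinite strictly ascending chain of right ideals inside $J$, in the spirit of Rogalski--Sierra--Stafford. Using $\sigma$-ampleness of $\sL$ (Theorem~\ref{thm-ample}), its restriction $\sL|_Z$ is $\sigma|_Z$-ample on $Z$, providing abundant cohomological vanishing and hence abundant sections of $H^0(X, \sI_Z \sR_n)$ in high degree. I would inductively choose sections $s_k \in J$ of increasing degrees $n_k$ that vanish along successively longer finite initial segments of the $\sigma$-orbit of $w$ inside $Z$ (beyond the vanishing forced by membership in $g(R)$), and show that each $s_k$ is independent of the right ideal generated by $s_1, \dots, s_{k-1}$. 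The essential input for the independence step is the density of the orbit of $w$ in $Z$: a dependence relation would force the vanishing behavior of $s_k$ at a newly added orbit point to be determined by the earlier generators multiplied by elements of $g(R)_{\geq 0}$, and the density of orbit points allows one to defeat this by choosing $s_k$'s vanishing locus to track an orbit point not in the base locus of the earlier expressions. The resulting strictly ascending chain of right ideals contradicts noetherianness of $g(R)$, completing the argument.
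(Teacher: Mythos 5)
Your overall strategy---derive a contradiction with noetherianness from the non-dense orbit closure $Z$---matches the paper in spirit, but the core technical step is left as a sketch and, as written, doesn't work. The fatal confusion is in the ascending-chain construction: your sections $s_k$ lie in $J_n = g(R_n) \cap H^0(X, \sI_Z \sL_n)$, i.e., they already vanish on all of $Z$, hence automatically at every point of the orbit $\{\sigma^m(w)\}$, since the orbit is contained in $Z$. So ``choosing $s_k$ to vanish along successively longer initial segments of the orbit'' imposes no new condition and cannot separate $s_k$ from the right ideal generated by $s_1,\dots,s_{k-1}$. The phenomenon that actually obstructs finite generation is not first-order vanishing along $Z$ but the behavior of the ideal sheaves $\sR_n\sL_n^{-1}$ on \emph{infinitesimal thickenings} of $Z$. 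This is precisely what the paper's proof captures and your proposal doesn't.

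The paper's argument is structured differently in a way that supplies the missing mechanism. It first invokes Corollary~\ref{cor-curves} to get $\dim \Gamma \geq 2$ (your $1 \leq \dim Z$ bound is too weak; the backward orbit of $w$ consists entirely of indeterminacy points of $p^{-1}$, and a curve can contain only finitely many, so you actually must have $\dim Z \geq 2$, and this matters for the ampleness technology). It then finds a uniform $k$ with $(\sR_m)_x \supseteq (\sI_\Gamma^{(k)}\sL_m)_x$ for all $x \in \Gamma$, and works on the thickened scheme $\Gamma' := (k+1)\Gamma$. On $\Gamma'$ it builds the ring $S := \bigoplus H^0(\Gamma', \sS_n)$ where $\sS_n$ is the image of $\sR_n$; by Lemma~\ref{lem-ample} (which requires minimality of $\Gamma$, i.e., that all $y \in P\cap\Gamma$ have dense orbit in $\Gamma$---another reduction you omit) the sequence $\{(\sS_n)_{\sigma^n}\}$ is ample, and by \cite[Lemma~7.4]{S-surfclass} the ring $S$ is a finitely generated $R$-module, hence noetherian. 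The ideal $H := \bigoplus H^0(\Gamma', \sH\sF_n)$ (with $\sH$ the ideal sheaf of $k\Gamma$ inside $\Gamma'$) is then shown \emph{not} finitely generated via a Nakayama argument: $\sH\sJ_m^{\sigma^n} \subsetneq \sH$, and $\sigma$-ampleness makes $\sH\sF_{n+m}$ globally generated for $n\gg 0$, so $H_nS_m \subsetneq H_{n+m}$. That is the concrete contradiction, and it lives in the auxiliary ring $S$, not directly in $g(R)$. Your proposal would need an analogue of the thickening and the Nakayama step to close the gap; the GK-dimension remark you make, while true, is not used and does not itself obstruct finite generation of a right ideal.
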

\begin{proof}
Suppose that for some $w \in P$, the orbit closure  $\Gamma := \bbar{\{\sigma^n(w)\}} \neq X$.  Now
$\Gamma$ is $\sigma$-invariant, and by passing to a  Veronese and replacing $\sigma$ by a power, we may
assume that $\Gamma$ is irreducible.  We may also assume that $\Gamma$ is
minimal:  i.e. any $y \in P \cap \Gamma$ has a dense orbit in $\Gamma$.  By
Corollary~\ref{cor-curves},  $\dim \Gamma \geq 2$.  

 Since the cokernel of the maps $\sR_n \to \sL_n$ is 0-dimensional, and all points in $P$ and $W'_n$ have infinite $\sigma$-orbits, it is an easy exercise that there is some $k \in \NN$ so that 
\beq\label{supset} (\sR_m)_x \supseteq (\sI_{\Gamma}^{(k)} \sL_m)_x\eeq
for all $x \in \Gamma$ and for all $m \geq n_0 \in \NN$.  (Here $\sI_{\Gamma}^{(k)}$ is the $k$'th symbolic power of $\sI_{\Gamma}$; we refer to the subscheme of $X$ it defines as $k\Gamma$.)  By increasing $k$, we may assume \eqref{supset} holds for   all $m \in \NN$.  

Let 
 $ \Gamma' := (k+1)\Gamma$ be the subscheme of $X$ defined by $\sI_{\Gamma}^{(k+1)}$.    Let $\sF := \sL|_{\Gamma'}$. 
Now, $\Gamma'$ is $\sigma$-invariant; we abuse notation and let $\sigma$ also denote $\sigma|_{\Gamma'}$. 
By  Theorem~\ref{thm-ample}, $\sF$ is $\sigma$-ample on $\Gamma'$.

For each $n$, let $\sS_n \subseteq \sF_n$ be the image of $\sR_n$ in $\sF_n$.  Since $\sF_n$ is invertible, we may define $\sJ_n := \sS_n (\sF_n)^{-1} \subseteq \sO_{\Gamma'}$.  We have $\sJ_n = \sJ_1 \sJ_1^{\sigma} \cdots \sJ_1^{\sigma^{n-1}}$.  

Let $S:= \bigoplus H^0( \Gamma', \sS_n)$.  
Since  the points in the subscheme defined by $\sJ_n$ have dense orbits
in $\Gamma$, by Lemma~\ref{lem-ample} the sequence of bimodules
$ \{(\sS_n)_{\sigma^n}\}$
is left and right ample on $\Gamma'$.  By \cite[Lemma~7.4]{S-surfclass}, $S$ is a
finitely generated left and right module over $R$.  Thus $S$ is left and right
noetherian.   

Let $\sH$ be the ideal sheaf on $\Gamma'$ defining $k \Gamma \subseteq \Gamma'$.  We have $\sJ_n \supseteq \sH$ for all $n \in \NN$, by \eqref{supset}. Let 
\[ H:= \bigoplus_{n \geq 0} H^0( \Gamma', \sH \sF_n  ).\]
Since $\sH \sF_n \subseteq \sS_n$ for all $n$ and 
$k \Gamma$ is
$\sigma$-invariant, this is a two-sided ideal of $S$.  

By Lemma~\ref{lem-locus}, $\sJ_n \subseteq \sI_w$ for all $n \in \NN$.
Let $m \geq 1 \in \NN$.  
By Nakayama's lemma,  
 $(\sH \cap  \sJ_n) \sJ_m^{\sigma^n} = \sH \sJ_m^{\sigma^n} \subsetneqq \sH = \sH \cap \sJ_{n+m}$ for any $n \geq 1$.  
 Since $\sF$ is $\sigma$-ample, for $n \gg 0$ the sheaf
$\sH \sF_{n+m}$ is globally generated.  Thus for $n \gg 0$, we have
$ H_n  S_m \subseteq H^0(\Gamma', (\sH \sF_{n} ) \sS_m^{\sigma^n}) \subsetneqq H_{n+m}$.
This shows $H$ is not finitely generated as a right ideal, giving a
contradiction. 

Thus the orbit of  $w$ is dense  in $X$.
\end{proof}

\section{The main theorem}\label{PROOF}

In this section, we prove Theorem~\ref{thm-main}; in fact, we prove a stronger structure theorem for $R$ and for the point space $Y_{\infty}$.

\begin{theorem}\label{thm-final}
Let $R$ be a  noetherian connected graded $\kk$-algebra generated in degree 1, where $\kk$ is an algebraically closed uncountable field.  Let $Y_{\infty}$
be the point space of $R$, and let $F$, $\H$ be as in  Notation~\ref{not1}.  
Suppose:
\begin{itemize}\item[(i)] there is a commutative
diagram
\[ \xymatrix@C=5pt{
& F \cong  Y_{\infty} \ar[ld]_{\pi} \ar[rd]^{p} \\
\H \ar[rr] && X }\]
 where $X$ is a  projective scheme that corepresents $\H$ through the morphism $H \longrightarrow X$. 
\end{itemize}

\noindent  Suppose further that:
\begin{itemize}
	\item[(ii)] $X$ is a variety of dimension $\geq 2$ that is either a surface or locally factorial at all indeterminacy points of $p^{-1}$;
	\item[(iii)] the map $\H \to X$ is bijective on $\kk$-points;
	\item[(iv)] the indeterminacy locus of $p^{-1}$ consists  (set-theoretically) of
	countably many points;
\end{itemize} 
Then:
	\begin{itemize}
\item[(a)] $Y_{\infty}$ is a noetherian fpqc-algebraic space;
\item[(b)] Let $\Omega$ be the indeterminacy locus of $p^{-1}$.  Then  there are an automorphism $\sigma$ of $X$ and  a finite subset $P'$  of $X$, all of whose points have critically dense orbits, so that $\Omega$ is supported on  half-orbits of points in $P'$.  In particular, $\Omega$ is critically dense in $X$.
\item[(c)]  there are a $\sigma$-ample invertible sheaf $\sL$ on $X$ and a closed subscheme $P$ of $X$ with $\Supp P=P'$ so that there is a homomorphism, surjective in large degree,
\[ g:  R \to R(X, \sL, \sigma, P).\]

\noindent Furthermore:

\item[(d)]  any graded homomorphism from $R$ to a birationally commutative algebra factors through $g$ in large degree.  Thus, up to finite dimension $R(X, \sL, \sigma, P)$ is the canonical birationally commutative factor of $R$. 
\item[(e)] Define $\theta', \theta^e$ as in Section~\ref{PPR}.  Then $ \ker g = \ker \theta' = \ker \theta^e$.
\end{itemize}
\end{theorem}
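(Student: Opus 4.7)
The plan is to assemble Theorem~\ref{thm-final} by combining the results already established in Sections~\ref{COARSE}--\ref{DATA}. Parts (d) and (e) require essentially no additional work: (e) is the final assertion of Theorem~\ref{thm-g}, and combined with Corollary~\ref{cor:birational factor} it immediately gives the universal property claimed in (d). The homomorphism $g: R \to R(X, \sL, \sigma, P)$ is already constructed in Lemma~\ref{lem-locus}(1), with $\sL$ invertible by Corollary~\ref{cor:L is invertible}, $\sigma$-ample by Theorem~\ref{thm-ample}, and each point of $\Supp P$ having dense $\sigma$-orbit by Proposition~\ref{prop-Zdense}. What genuinely remains is (i) critical density of the orbits appearing in (b), (ii) the set-theoretic identification $\Omega = \mb{W}$, (iii) surjectivity of $g$ in large degree from (c), and (iv) noetherianity of $Y_\infty$ from (a).

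I would first establish (i) by adapting the argument of Proposition~\ref{prop-Zdense}. Assume for contradiction that some $x \in \Supp P$ has dense but not critically dense $\sigma$-orbit, so that an infinite subset of the orbit lies in a proper closed $\Gamma \subsetneq X$. After passing to a Veronese of $R$ and shrinking $\Gamma$, one arranges that $\Gamma$ is $\sigma$-invariant, irreducible, and of minimal dimension among proper closed subvarieties swallowing an infinite orbit subset. One then reruns the construction in the proof of Proposition~\ref{prop-Zdense} with $\Gamma' = (k+1)\Gamma$ for appropriate $k$: density of orbits of points of $\Supp P \cap \Gamma$ in $\Gamma$ (which follows from the minimality of $\Gamma$) allows Lemma~\ref{lem-ample} to apply to the restricted data and delivers a graded right ideal $H$ in the noetherian ring $\bigoplus_n H^0(\Gamma', \sS_n)$ that fails to be finitely generated, a contradiction.

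With critical density in hand, Lemma~\ref{lem-ample} applies directly to the sequence $\{(\sR_n)_{\sigma^n}\}$ on $X$, so this bimodule sequence is left and right ample. In particular $\sR_n$ is globally generated for $n \gg 0$ and the multiplication maps
\[ H^0(X, \sR_n) \otimes H^0(X, \sR_m^{\sigma^n}) \to H^0(X, \sR_{n+m}) \]
are surjective for $n, m \gg 0$. Since $g(R_1)$ globally generates $\sR_1 = \sI_P \sL$ by construction, a straightforward induction then gives $g(R_n) = H^0(X, \sR_n) = R(X, \sL, \sigma, P)_n$ for all sufficiently large $n$, yielding (c). For the identification $\Omega = \mb{W}$: the inclusion $\mb{W} \subseteq \Omega$ follows from the inclusion $X_\infty \hookrightarrow Y_\infty$ since $p|_{X_\infty}$ factors through $p$; conversely, at any $x \notin \mb{W}$ the section of $(p|_{X_\infty})^{-1}$ defined near $x$ can be used, following the gluing construction from the proof of Theorem~\ref{thm-VIII}, to build an isomorphism $X \cong \wt{X}$ forcing $p^{-1}$ to be defined at $x$. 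Taking $P' := \Supp P$ then completes (b).

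Finally, for noetherianity of $Y_\infty$: combining (c), (e), and Corollary~\ref{cor-ann}, the point-module functors of $R$, $g(R)$, and $R(X, \sL, \sigma, P)$ coincide once we pass to sufficiently large degree. Hence for $\ell \gg 0$, the $\ell$-shifted point space of $R$ is isomorphic to that of $R(X, \sL, \sigma, P)$, which is a noetherian fpqc-algebraic space by Proposition~\ref{prop:existence of q}(*). Since $Y_\infty$ differs from its $\ell$-shifted counterpart only by finitely many projective truncation schemes, $Y_\infty$ is itself a noetherian fpqc-algebraic space, giving (a). The main obstacle will be (i): establishing critical density from density alone using only the noetherianity of $R$. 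The adaptation of Proposition~\ref{prop-Zdense} requires one to pass carefully to a Veronese and to a minimal $\sigma$-invariant subvariety, and to verify that the non-finite-generation argument there still goes through when the orbit of $x$ in $X$ is dense in $X$ but only sparsely visits $\Gamma$.
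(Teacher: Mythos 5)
Your assembly of (d) and (e) from Theorem~\ref{thm-g}, Corollary~\ref{cor:birational factor} and Theorem~\ref{thm-universal1}, and of the data $(\sigma,\sL,P)$ from Corollary~\ref{cor:L is invertible}, Theorem~\ref{thm-ample} and Proposition~\ref{prop-Zdense}, matches the paper. But the two steps you yourself single out as the remaining work are where the proposal breaks down. For critical density, the adaptation of Proposition~\ref{prop-Zdense} does not go through: that argument needs the offending closed set $\Gamma$ to be $\sigma$-invariant (so that $\Gamma'=(k+1)\Gamma$, the restriction $\sigma|_{\Gamma'}$, and the two-sided ideal $H$ make sense), whereas for critical density one must rule out an infinite \emph{subset} of a dense orbit lying in a proper closed set, whose closure need not be invariant under any power of $\sigma$ (the points may lie on a curve all of whose $\sigma$-translates are distinct), so ``shrinking to a minimal $\sigma$-invariant subvariety'' is not available. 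Likewise, surjectivity of $g$ in large degree does not follow from ``$g(R_1)$ generates $\sR_1$ plus ampleness plus induction'': already commutatively, $\kk[x^2,y^2]\subset\bigoplus_n H^0(\PP^1,\sO(2n))$ is generated in degree one by a base-point-free system generating the sheaf, yet is a proper subspace in every degree. The paper settles both points at once by checking that the sections $\theta'(R_n)$ define an \emph{immersion} of $X\ssm W'_n$ and then invoking the Rogalski--Stafford criterion (Theorem~\ref{thm-surjcrit}), which delivers equality up to finite dimension and critical density simultaneously; this immersion hypothesis, which you never use, is the essential extra input, and no elementary replacement for Theorem~\ref{thm-surjcrit} is offered in your sketch.

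For (b) you claim the sharper identity $\Omega=\mb{W}$ and propose to prove $\Omega\subseteq\mb{W}$ by rerunning the gluing of Theorem~\ref{thm-VIII} at an arbitrary $x\notin\mb{W}$. That gluing depends on Corollary~\ref{cor-1pt}: one must know in advance that $p^{-1}(x)$ is a single $\kk$-point at which $\Psi^{-1}$ is defined, which is exactly what is unknown at a point of infinite orbit, so this step is unsubstantiated; indeed the paper never proves $\Omega=\mb{W}$. Its actual route is different: apply Proposition~\ref{prop:existence of q} to $S=g(R)$ (legitimate by (e), since $\ker g$ annihilates every point module, so $Y_\infty$ is also the point space of $S$) to get $q:Y_\infty\to X$ with indeterminacy locus of $q^{-1}$ equal to $\bigcup_{n\geq 0}\sigma^{-n}(P')$; corepresentability produces $\tau:X\to X$ with $\tau p=q$, one shows $\tau$ is an automorphism (finite and birational) commuting with $\sigma$, and concludes only that $\Omega=\tau^{-1}\bigl(\bigcup_{n\geq0}\sigma^{-n}(P')\bigr)$ lies on half-orbits of $P'$ --- weaker than $\Omega=\mb{W}$ but all that is claimed. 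Finally, for (a) your detour through $\ell$-shifted point spaces is unnecessary and the claim that $Y_\infty$ ``differs from its shifted counterpart by finitely many truncation schemes'' is not justified; Proposition~\ref{prop:existence of q}(*) applies directly to $S=g(R)$ and gives noetherianity of the unshifted $Y_\infty$, which is the paper's argument.
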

\begin{proof}
(e).  
Let $K := \kk(X)$.  Let $\sigma \in \Aut_\kk(X) \subseteq \Aut_{\kk}(K)$ and $g:  R \to K[t; \sigma]$ be given by  Theorem~\ref{thm-g}.  We write $S:= g(R)$.
By Theorem~\ref{thm-g}, we have $\ker g = \ker \theta' = \ker \theta^e$.

(d) is   the universal property of $\theta^e$ in Theorem~\ref{thm-universal1}.

(c)
Let the rank 1 reflexive sheaf $\sL$ on $X$ be as in Notation~\ref{not3}.  By Corollary~\ref{cor:L is invertible}, $\sL$ is  invertible  and $S \subseteq B(X, \sL, \sigma)$.
By Theorem~\ref{thm-ample}, $\sL$ is $\sigma$-ample.

For $n \geq 0$ define $\sR_n \subseteq \sL_n$ to be the subsheaf generated by the sections in $g(R_n)$.  Let $P$ be the subscheme of $X$ defined by $\sR_1 \sL^{-1}$.    
Let $P' := \Supp P$.
 Since $g(R)$ is generated in degree 1, we have $\sR_n = \sR_1 \sR_1^{\sigma} \cdots \sR_1^{\sigma^n}$, and $g(R) \subseteq  R(X, \sL, \sigma, P) \subseteq  B(X, \sL, \sigma)$.    

Adopt Notation~\ref{not-ZW}.  That is, for  $n \geq n_0$, let $X_n \subseteq Y'_n$ be the strict transform of $X$.  Recall that  $W'_n $ is the indeterminacy locus of the rational map $(f_n |_{X_n})^{-1}:  X \dra X_n$.   By Lemma~\ref{lem-locus}, we have  $W'_n = P' \cup \cdots \cup \sigma^{-(n-1)}(P')$ for $n \geq n_0$.  
By Proposition~\ref{prop-Zdense}, all points in $P'$ have dense orbits.

The sections in $g(R_n) = \theta'(R_n)$ define an immersion of $X \ssm W'_n \cong f_n^{-1}(X \ssm W'_n ) \cap X_n \hra \PP^d$ for appropriate $d$.  
We apply the following result of Rogalski and Stafford:

\begin{theorem}\label{thm-surjcrit}
{\rm (\cite[Theorem~9.2]{RS-0})}
Let $X$ be a  projective variety, let $\sigma \in \Aut(X)$, and let
$\sL$ be a $\sigma$-ample invertible sheaf on $X$.  Let $\sI = \sI_P$ be an
ideal sheaf on $X$ so that $P$ is 0-dimensional and so that all points in $P$ have dense orbits.

Let $S' \subseteq R(X, \sL,
\sigma, P)$ be a noetherian subalgebra so that
\begin{enumerate}
\item for $n \gg 0 $ the sections in $S'_n $ generate $\sI_P(\sI_P)^\sigma \dots (\sI_P)^{\sigma^{n-1}} \sL_n$; and
\item for $n \gg 0$ the sections in $S'_n $ restrict to give an immersion 
\[ X \ssm (P \cup \sigma^{-1}(P) \cup \cdots \cup \sigma^{-(n-1)}(P)) \hra \PP^N.\]
\end{enumerate}
Then $S' = R(X, \sL, \sigma, P)$ up to finite dimension, and all points in $P$
have critically dense orbits.   
\end{theorem}

We obtain immediately that $S= g(R) $ and $R(X, \sL, \sigma, P)$  are equal in large degree, and that all points in $P'$ have critically dense orbits.

(a).   By part (e), $\ker g$ annihilates any $R$-point module. Thus $Y_\infty$ is also the point space for $S=g(R)$, and is a noetherian fpqc-algebraic space by Theorem~\ref{thm:NS1}.

(b)    Let $q: Y_\infty \to X$ be the map constructed in Proposition~\ref{prop:existence of q}.  
By that result, $q$ factors through $\H$; as $\H \to X$ corepresents $\H$, thus $q$  factors through $p$, and there is a morphism $\tau: X \to X$ so that $\tau p = q$.  
By Proposition~\ref{prop:existence of q}(iii), $q^{-1}$ is defined at the generic point of $X$.
As $p^{-1}$ is defined at the generic point of $X$ by assumption,  $\tau$ is birational and thus:
\begin{claim}
$\tau$ is an automorphism of $X$.
\end{claim}
\begin{proof}[Proof of Claim.]
As in the proof of Lemma~2.3(1) of \cite{Fujimoto2002}, $\tau$ is finite, as well as birational. (We note  the cited result applies to  projective nonsingular varieties defined over $\CC$; but the latter two hypotheses are not used in the proof we cite.)
By finiteness of the integral closure,  $(\tau^{n})_* \sO_X = (\tau^{n+1})_* \sO_X$ for some $n\in \NN$.  
As $\tau^n$ is finite and so affine, we have $\sO_X = \tau_* \sO_X$, and so $\tau$ is an isomorphism.
\end{proof}
Returning to the proof of the theorem, 
let $\Omega$ be the indeterminacy locus of $p^{-1}$; then $\tau( \Omega)$ is the indeterminacy locus of $q^{-1}$.   By Proposition~\ref{prop:existence of q}(iii), we have
\[ \tau (\Omega) = \bigcup \{ \sigma^{-n}( P') \st n \geq 0\}.\]

Consider the maps
\[ \xymatrix@R=5pt{
 X \ar[rr]^{\tau} \ar[dd]_{\sigma} && X \ar[dd]^{\sigma} \\
& Y_\infty \ar[lu]^{p} \ar[ru]_{q} \ar[dd]_(.3){\Psi} \\
X \ar'[r] [rr]^{\tau} && X \\
& Y_{\infty}. \ar[lu]^{p} \ar[ru]_{q} }
\]
The top and  bottom faces of the diagram commute by definition of $\tau$, and the  left  face  commutes by the construction of $\sigma$ in Proposition~\ref{prop-II}.    
The right face commutes by Proposition~\ref{prop:existence of q}(\dag).
Thus 
\[ \sigma \tau p =  \sigma q = q \Psi = \tau p \Psi = \tau \sigma p.\]
By Lemma~\ref{lem-surj-equal} we have
\beq \label{stcommute} \sigma \tau = \tau  \sigma. \eeq

Note that $p^{-1} $ is not defined at any point in $\bigcup_n W'_n = \bigcup \{ \sigma^{-n}(P')\st n \geq 0 \}$.  
We thus have
\[ \bigcup \{ \sigma^{-n}(P') \st n \geq 0 \} \subseteq \Omega =   \bigcup \{ \tau^{-1} \sigma^{-n}(P') \st n \geq 0\} =  \bigcup \{  \sigma^{-n} \tau^{-1}(P') \st n \geq 0\},\]
by \eqref{stcommute}.  
Thus  $\tau$ permutes the finitely many orbits of points in $P'$, and thus there is some $k<0$ so that  
\[  \bigcup \{  \sigma^{-n} \tau^{-1}(P') \st n \geq 0\} = \Omega \subseteq  \bigcup \{  \sigma^{-n} (P') \st n \geq k\}. \]  
We observed already that  all points in $ P'$ have critically dense orbits.\ff{In fact, for appropriate $m, n$ we have 
$ \sigma^n = \tau^m$.  To see this, let $m\neq 0$ be such that $\tau^m$ leaves some orbit of a point in $P$ invariant.  Then  for some $p \in P$ and $n \in \ZZ$, we have $\tau^m(p) = \sigma^n(p)$.  Since the fixed point set of $\sigma^n \tau^{-m}$ is Zariski-closed in $X$ and includes all points in the (dense) orbit of $P$, it must consist of all of $X$.}
\end{proof}

Theorem~\ref{thm-main} is immediate from Theorem~\ref{thm-final}.  
We note that we do not believe that the local factoriality hypothesis in Theorem~\ref{thm-main} is necessary.

To end, we comment on some open problems.  
The results in this paper naturally lead to the question of understanding the canonical  birationally commutative factors of noetherian algebras:  that is, of classifying noetherian birationally commutative graded algebras generated in degree 1.  
We  believe that Theorem~\ref{thm-final} should have an extension to any situation where there is a coarse moduli scheme for tails of point modules.  In particular, we conjecture that given a noetherian connected graded $\kk$-algebra $R$, generated in degree 1, and  a  projective scheme $X$ that corepresents tails of $R$-points, the image of $R$ inside $\kk(X)[t; \sigma]$ is in fact contained in the  twisted homogeneous coordinate ring of a $\sigma$-ample invertible sheaf on $X$. We cannot prove this, however; moreover, understanding the image of $R$  would require understanding the analogues of \nba s at positive-dimensional subschemes, and these have not yet been studied.

More interesting, perhaps, is the existence of noetherian birationally commutative algebras with no coarse moduli scheme of tails of points.  The algebra discussed in \cite{GK4} is of this type, as Rogalski and the second author will show in forthcoming work.   Thus algebras such as those appearing in \cite{GK4} must appear in any putative classification of canonical birationally commutative factors.  Note this algebra has no nonzero maps to a twisted homogeneous coordinate ring!  We cannot yet precisely define what geometry characterizes such algebras; these questions are the subject of ongoing research.

Finally, we note that if $P$ is supported on infinite but not critically dense $\sigma$-orbits, then $R(X, \sL, \sigma, P)$ is not noetherian and neither (given sufficient ampleness of $\sL$) is its point space.  This contrasts with the conclusion of Theorem~\ref{thm-main} that $Y_\infty$ is noetherian.  Thus the results here give some evidence for the point of view that noetherianness can be detected geometrically.  We conjecture that the point space $Y_\infty$ of any noetherian algebra is noetherian.

\bibliographystyle{amsalpha}

\begin{thebibliography}{{LMB}00}

\bibitem[AM86]{etalehomotopy}
M.~Artin and B.~Mazur, \emph{Etale homotopy}, Lecture Notes in Mathematics,
  vol. 100, Springer-Verlag, Berlin, 1986, Reprint of the 1969 original.

\bibitem[ASZ99]{ASZ1999}
M.~Artin, L.~W. Small, and J.~J. Zhang, \emph{Generic flatness for strongly
  noetherian algebras}, J. Algebra \textbf{221} (1999), no.~2, 579--610.

\bibitem[ATV90]{ATV1990}
M.~Artin, J.~Tate, and M.~{Van den Bergh}, \emph{Some algebras associated to
  automorphisms of elliptic curves}, The Grothendieck Festschrift Vol. I,
  Progr. Math., vol.~86, Birkh{\"a}user Boston, Boston, 1990, pp.~33--85.

\bibitem[AV90]{AV}
M.~Artin and M.~{Van den Bergh}, \emph{Twisted homogeneous coordinate rings},
  J. Algebra \textbf{133} (1990), no.~2, 249--271.

\bibitem[AZ01]{AZ2001}
M.~Artin and J.~J. Zhang, \emph{Abstract {H}ilbert schemes}, Algebr. Represent.
  Theory \textbf{4} (2001), no.~4, 305--394.

\bibitem[Eis95]{Eis}
D.~Eisenbud, \emph{Commutative algebra with a view toward algebraic geometry},
  Graduate Texts in Mathematics, vol. 150, Springer-Verlag, New York, 1995.

\bibitem[Fuj02]{Fujimoto2002}
Yoshio Fujimoto, \emph{Endomorphisms of smooth projective 3-folds with
  non-negative {K}odaira dimension}, Publ. Res. Inst. Math. Sci. \textbf{38}
  (2002), no.~1, 33--92.

\bibitem[Ful98]{Fulton1998}
W.~Fulton, \emph{Intersection theory}, second ed., Ergebnisse der Mathematik
  und ihrer Grenzgebiete. 3. Folge., vol.~2, Springer-Verlag, Berlin, 1998.

\bibitem[Hin62]{Hinohara1962}
Yukitoshi Hinohara, \emph{Projective modules over semilocal rings}, T\^ohoku
  Math. J. (2) \textbf{14} (1962), 205--211.

\bibitem[Kee99]{Keel1999}
Se{\'a}n Keel, \emph{Basepoint freeness for nef and big line bundles in
  positive characteristic}, Ann. of Math. (2) \textbf{149} (1999), no.~1,
  253--286.

\bibitem[Kee00]{Keeler2000}
D.~S. Keeler, \emph{Criteria for $\sigma$-ampleness}, J. Amer. Math. Soc.
  \textbf{13} (2000), no.~3, 517--532.

\bibitem[Kle66]{Kleiman1966}
Steven~L. Kleiman, \emph{Toward a numerical theory of ampleness}, Ann. of Math.
  (2) \textbf{84} (1966), 293--344.

\bibitem[KRS05]{KRS}
D.~S. Keeler, D.~Rogalski, and J.~T. Stafford, \emph{Na{\"i}ve noncommutative
  blowing up}, Duke Math. J. \textbf{126} (2005), no.~3, 491--546.

\bibitem[Laz04]{Laz}
R.~Lazarsfeld, \emph{Positivity in algebraic geometry. {I}. {C}lassical
  setting: line bundles and linear series}, Ergebnisse der Mathematik und ihrer
  Grenzgebiete. 3. Folge., vol.~48, Springer-Verlag, Berlin, 2004.

\bibitem[LMB00]{LMB}
G{\'e}rard Laumon and Laurent Moret-Bailly, \emph{Champs alg\'ebriques},
  Ergebnisse der Mathematik und ihrer Grenzgebiete. 3. Folge. A Series of
  Modern Surveys in Mathematics [Results in Mathematics and Related Areas. 3rd
  Series. A Series of Modern Surveys in Mathematics], vol.~39, Springer-Verlag,
  Berlin, 2000.

\bibitem[NS10]{NS1}
T.~A. Nevins and S.~J. Sierra, \emph{Moduli spaces for point modules on
  na{\"i}ve blowups}, Algebra \& Number Theory {\bf 7} (2013), no. 4, 795--834.

\bibitem[Ols07]{Olsson2007}
Martin Olsson, \emph{Sheaves on {A}rtin stacks}, J. Reine Angew. Math.
  \textbf{603} (2007), 55--112.

\bibitem[Rog04]{R-generic}
D.~Rogalski, \emph{Generic noncommutative surfaces}, Adv. Math. \textbf{184}
  (2004), no.~2, 289--341.

\bibitem[RS07]{RS-0}
D.~Rogalski and J.~T. Stafford, \emph{Na{\"i}ve noncommutative blowups at
  zero-dimensional schemes}, J. Algebra \textbf{318} (2007), no.~2, 794--833.

\bibitem[RS09]{RS}
\bysame, \emph{A class of noncommutative projective surfaces}, Proc. Lond.
  Math. Soc. (3) \textbf{99} (2009), no.~1, 100--144.

\bibitem[RS11]{GK4}
D.~Rogalski and S.~J. Sierra, \emph{Some noncommutative projective surfaces of
  {G}{K}-dimension 4}, to appear in Compos. Math., arXiv:1101.0737, 2011.

\bibitem[RZ08]{RZ2008}
D.~Rogalski and J.~J. Zhang, \emph{Canonical maps to twisted rings}, Math. Z.
  \textbf{259} (2008), no.~2, 433--455.

\bibitem[Sie11]{S-surfclass}
S.~J. Sierra, \emph{Classifying birationally commutative projective surfaces},
  Proceedings of the LMS \textbf{103} (2011), 139--196.

\bibitem[SZ00]{SteZh}
Darin~R. Stephenson and James~J. Zhang, \emph{Noetherian connected graded
  algebras of global dimension 3}, J. Algebra \textbf{230} (2000), no.~2,
  474--495.

\bibitem[{Van}96]{VdB1996}
M.~{Van den Bergh}, \emph{A translation principle for the four-dimensional
  {S}klyanin algebras}, J. Algebra \textbf{184} (1996), no.~2, 435--490.
  
  \bibitem[Vis05]{Vistoli}
Angelo Vistoli, \emph{Grothendieck topologies, fibered categories and descent
  theory}, Fundamental algebraic geometry, Math. Surveys Monogr., vol. 123,
  Amer. Math. Soc., Providence, RI, 2005, pp.~1--104.


\end{thebibliography}
\def\cprime{$'$}
\providecommand{\bysame}{\leavevmode\hbox to3em{\hrulefill}\thinspace}
\providecommand{\MR}{\relax\ifhmode\unskip\space\fi MR }
\providecommand{\MRhref}[2]{%
  \href{http://www.ams.org/mathscinet-getitem?mr=#1}{#2}
}
\providecommand{\href}[2]{#2}


\end{document}